\newtheorem{thm}{Theorem}[section]
\newtheorem{ex}{Example}[section]
\newtheorem{lem}{Lemma}[section]
\theoremstyle{definition}
\newtheorem{defn}{Definition}[section]
\theoremstyle{remark}
\newtheorem{rem}{Remark}[section]
\numberwithin{equation}{section}
\begin{document}
\title{Further study on the conformable fractional Gauss hypergeometric function}
\date{}
\author{Mahmoud Abul-Ez$^{a}$\thanks{Corresponding author: mabulez56@hotmail.com},
Mohra Zayed$^{b}$\thanks{mzayed@kku.edu.sa } and Ali Youssef$^{c}$\thanks{alimohammedyouseuf@yahoo.com} \\ 
\footnotesize $^{a,c}$  Mathematics Department, Faculty of Science,
 Sohag University, Sohag 82524, Egypt.\\ 
\footnotesize $^{b}$ Mathematics Department, College of Science, 
 King Khalid University, Abha, Saudi Arabia.
}

\maketitle

\begin{abstract}
This paper presents a somewhat exhaustive study on the conformable fractional Gauss hypergeometric function (CFGHF). We start by solving the conformable fractional Gauss hypergeometric equation (CFGHE) about the fractional regular singular points $x=1$ and $x=\infty$. Next,  various generating functions of the CFGHF are established.  We also develop some differential forms for the CFGHF.
Subsequently, differential operators and the contiguous relations are reported.
Furthermore, we introduce the conformable fractional integral representation and the fractional Laplace transform of CFGHF.
As an application, and after making a suitable change of the independent variable,  we provide general solutions of some known conformable fractional differential equations, which could be written by means of the CFGHF.
\end{abstract}

\noindent \textbf{Keywords}: Special functions; Gauss hypergeometric functions;  Conformable fractional calculus;
  Fractional differential equations. \vskip1mm 

\section{Introduction}
\noindent
We believe  that the old topics in classical analysis of special functions had been generalized to either the fractional calculus or the higher dimensional setting.
Fractional calculus has recently attracted considerable attention. It is defined as a generalization of differentiation and integration to an arbitrary order. It has become a fascinating branch of applied mathematics, which has recently stimulated mathematicians and physicists. Indeed, it represents a powerful tool to study a myriad of problems from different fields of science, such as statistical mechanics, control theory, signal and image processing, thermodynamics and quantum mechanics (see \cite{oldham1974fractional,samko1993fractional,miller1993introduction,podlubny1998fractional,hilfer2000applications,west2012physics,stankovic2006equation,tarasov2006fractional,magin2004fractional1,magin2004fractional2,gorenflo1997fractional,agrawal2004general,kilbas2006theory}).

Over the last four decades, several interesting and useful extensions of
many of the familiar special functions, such as the Gamma, Beta, and Gauss
hypergeometric functions were considered by various authors
of whom we may mention \cite{agarwal2014note,chaudhry1997extension,chaudhry2004extended,ozergin2011some,ozergin2011extension}. Functions of hypergeometric type constitute an important class of special functions.
The hypergeometric function $
_{2}F_{1}(\mu ,\nu ;c;x)$ plays a significant role in mathematical analysis and its
applications. This function allows one to solve many interesting
mathematical and physical problems, such as conformal mapping of triangular
domains bounded by line segments or circular arcs and various problems of
quantum mechanics. Most of the functions that occur in the analysis are classified as
special cases of the hypergeometric functions. Gauss first introduced and
studied hypergeometric series, paying particular attention to the cases when a
series converges to an elementary function which leads to study the hypergeometric series. Eventually, 
elementary functions and several other important functions in mathematics
can be expressed in terms of hypergeometric functions.
Hypergeometric functions can also be described as the solutions of special second-order linear
differential equations, which are  the hypergeometric differential equations. Riemann
was the first to exploit this idea and introduced a special symbol to
classify hypergeometric functions by singularities and exponents of
differential equations. The hypergeometric function is a solution of the
following Euler's hypergeometric differential equation 
\begin{equation}\label{eq1.1}
x\left( 1-x\right) \frac{d^{2}y}{dx^{2}}+\left[ c-\left( \mu +\nu +1\right) x %
\right] \frac{dy}{dx}-\mu \nu y=0,
\end{equation}
which has three regular singular points $0, 1,$ and $\infty$. The
generalization of this equation to three arbitrary regular singular points
is given by Riemann's differential equation. Any second order differential
equation with three regular singular points can be converted to the
hypergeometric differential equation by changing of variables.

Recently, as a conformable fractional derivative introduced of \cite{khalil2014new}, the authors in \cite{hammad2019fractional} used the new concept of fractional regular
singular points with the technique of fractional power series to solve the CFGHDE about $x=0$. They also  introduced
the form of the conformable fractional derivative and the integral
representation of the fractional Gaussian function. Besides, the solution of the fractional $
k-$hypergeometric differential equation was introduced in \cite{ali2020solution}.
As the Gauss hypergeometric differential equation appears in many
problems of physics, engineering, applied science, as well as finance and many
other important problems, it largely motivates us to conduct the present study.

In the present paper, we intend to continue the work
of Abu Hammad et al. \cite{hammad2019fractional} by  finding the solutions of  CFGHE about the fractional regular
singular points $x=1$ and $x=\infty .$ 
Afterward, we give a wide study on the CFGHF as follows. First, various generating functions of the CFGHF are established. Besides, some  differential forms are  developed for the CFGHF.
Then,  differential operators and  contiguous relations are derived. Furthermore, we introduce the conformable fractional integral
representation and the fractional Laplace transform of the CFGHF. As an application,
and after making a suitable change to the independent variable, we derive the 
general solutions of some conformable fractional differential equations (CFDE), which
could be written in terms of the  CFGHF. 
\section{Preliminaries and basic concepts}
Many definitions of fractional derivatives are obtained and compared.
The most popular definitions of fractional derivatives are due to Riemann-Liouvellie and Caputo.
It is pointed out that these definitions used an integral form
and lacked some basic properties, such as product rule, quotient rule, and
chain rule.

In 2014, Khalil et al. \cite{khalil2014new} introduced a surprising and satisfying definition of
the fractional derivative that is analog  to the classical derivative
definition called conformable fractional derivative (CFD). Their definition runs, as follows:
\begin{defn}
Let $f:\Omega \subseteq \left( 0,\infty \right) \rightarrow \mathbb{R} $ and 
$x\in \Omega .$ The conformable fractional derivative of order $\alpha \in
(0,1]$ for $f$ at $x$ is defined as 
\begin{equation*}
D^{\alpha }f\left( x\right) =\lim\limits_{h\rightarrow 0}\frac{f\left(
x+hx^{1-\alpha }\right) -f\left(x\right) }{h},
\end{equation*}
whenever the limit exists. The function $f$ is called $\alpha -$ conformable
fractional differentiable at $x.$ For $x=0,~D^{\alpha }f\left( 0\right)
=\lim\limits_{h\rightarrow 0^{+}}D^{\alpha }f\left( x\right) $ if such a limit
exists.
\end{defn}

This definition carries  very important and natural properties. Let $%
D^{\alpha }$ denote the conformable fractional derivative (CFD)  operator  of order $\alpha $. We recall from \cite{khalil2014new,abdeljawad2015conformable,hammad2016systems} some of its
general properties as follows.

Let $f$ and $g$ be $\alpha -$differentiable. Then, we have
\begin{itemize}
\item[(1)] Linearity: $D^{\alpha }\left( af+bg\right) \left( t\right)
=aD^{\alpha }f\left( t\right) +bD^{\alpha }g\left( t\right) ,$ For all$%
~a,b\in \mathbb{R}.$
\item[(2)] Product rule: $D^{\alpha }\left( fg\right) \left( t\right)
=f\left( t\right) D^{\alpha }g\left( t\right) +g\left( t\right) D^{\alpha
}f\left( t\right) .$
\item[(3)] Quotient rule: $D^{\alpha }\left( \frac{f}{g}\right) \left(
t\right) =\frac{g\left( t\right) D^{\alpha }f\left( t\right) -f\left(
t\right) D^{\alpha }g\left( t\right) }{g^{2}\left( t\right) },$ where $g\left( t\right) \neq
0.$
\item[(4)] Chain rule: $D^{\alpha }\left( f\circ g\right) \left( t\right)
=D^{\alpha }f\left( g\left( t\right) \right) .D^{\alpha }g\left( t\right)
.g\left( t\right) ^{\alpha -1}.$
\end{itemize}
Notice that for $\alpha =1$ in the $\alpha -$ conformable fractional
derivative, we get the corresponding classical limit definition of the
derivative. Also, a function could be $\alpha -$ conformable differentiable
at a point but not differentiable in the ordinary sense. For more details, we
refer to \cite{khalil2014new,abdeljawad2015conformable,el2020modification}.

Any linear homogenous differential equations of order two with
three regular singularities can be reduced to \eqref{eq1.1}. The hypergeometric function is
known as a solution to the hypergeometric equation \eqref{eq1.1}. One of the solutions of  the hypergeometric equation is given by the following
Gauss hypergeometric series in the form
\begin{equation}\label{eq2.1}
_{2}F_{1}\left( \mu ,\nu ;c;x\right) =\sum\limits_{n=0}^{\infty }\frac{\left(
\mu \right) _{n}\left( \nu \right) _{n}}{\left( c\right) _{n}}\frac{x^{n}}{n!}~\ \
\ \ \ \left( \left\vert x\right\vert <1\right)
\end{equation}
where $\left( b\right) _{n}$ stands for the usual Pochhammer symbol defined by 
\begin{equation*}
\left( b\right) _{n}=b\left( b+1\right) \left( b+2\right) ...\left( b+n-1\right) =\frac{\Gamma \left( b+n\right) }{\Gamma \left( b\right) 
},~\ n\in 
\mathbb{N}~\text{and }\left( b\right) _{0}=1.
\end{equation*}
Choosing the values of the parameters $\mu ,\nu $, and $c$ in an appropriately,
one can obtain many elementary and special functions as particular cases of
the Gauss hypergeometric series. For instance, the complete elliptic
integrals of the first and the second kinds, the Legendre associated
functions, ultra-spherical polynomials, and many others are special cases of
the function $_{2}F_{1}\left( \mu ,\nu ;c;x\right) .$
\begin{defn}\label{DCont} \cite{mubeen2014contiguous}
Two hypergeometric functions are said to be contiguous if their parameters $%
\mu ,\nu $, and $c$ differ by integers. The relations made by contiguous functions
are said to be contiguous function relations.
\end{defn}
\begin{defn}
The point $x=a$ is called an $\alpha -$regular singular point for the
equation
\begin{equation}\label{eq2.2}
D^{\alpha }D^{\alpha }y+P\left( x\right) D^{\alpha }y+Q\left( x\right) y=0,
\end{equation}
if $\lim\limits_{x\rightarrow a^{+}}(x^{\alpha }-a)P\left( x\right) $ and $%
\lim\limits_{x\rightarrow a^{+}}(x^{\alpha }-a)^{2}Q\left( x\right) $  exist.
\end{defn}
\begin{defn} 
\cite{abdeljawad2015conformable} A series $\sum\limits_{n=0}^{\infty }a_{n}x^{\alpha n}$  is called a
fractional Maclaurin power series.
\end{defn}
\begin{rem}
We will use $D^{n\alpha }$ to denote $\underset{n-\text{times}}{\underbrace{
D^{\alpha }D^{\alpha }...D^{\alpha }}}$. If $D^{n\alpha }f$ exists for all 
$n$ in some interval $[0,\lambda ]$ then one can write $f$ in the form of a
fractional power series
\end{rem}

\begin{defn}\label{DefFI} \cite{khalil2014new}
Suppose that $f:(0,\infty)\rightarrow \mathbb{R}$ is $\alpha$-differentiable, $\alpha \in (0,1]$, then  the  $\alpha$-fractional integral of $f$  is defined by
\begin{equation*}
I_{\alpha }^{a}f\left(t \right) = I_{1 }^{a}\left( t^{\alpha -1}f \right)=
  \int\limits_{a}^{t} \frac{f\left( x\right) }{x^{1-\alpha }}dx,\ t\geq 0.
\end{equation*}
\end{defn}

 For the infinite double series, we have the following useful Lemma (see \cite{rainville1969special}), which will be used in the sequel. 
 \begin{lem}\label{Lem1new}
 
 \begin{align}
 \sum_{n=0}^{\infty}\sum_{k=0}^{\infty}a_{k,n}&=\sum_{m=0}^{\infty}\sum_{j=0}^{m}a_{j,m-j}
 =\sum_{n=0}^{\infty}\sum_{k=0}^{n}a_{k,n-k}\\
 \sum_{n=0}^{\infty}\sum_{k=0}^{n}b_{k,n}&=\sum_{n=0}^{\infty}\sum_{k=0}^{\infty}b_{k,n+k}
 \end{align}
 \end{lem}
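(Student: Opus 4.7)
The plan is to prove both identities by a bijective reindexing of the respective double sums, justified either by absolute convergence of the numerical series or by treating them as formal power series (which is the setting in which the lemma will later be applied in the paper, e.g.\ in Cauchy products and generating-function manipulations for the CFGHF).

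For the first identity, I would introduce the diagonal index $m = k+n$ and set $j = k$. The assignment $(k,n) \mapsto (j,m) = (k, k+n)$ is a bijection between the index set $\{(k,n) : k \ge 0,\ n \ge 0\}$ and the set $\{(j,m) : 0 \le j \le m,\ m \ge 0\}$, with inverse $(j,m) \mapsto (j,\, m-j)$. Under this correspondence the entry $a_{k,n}$ becomes $a_{j,\,m-j}$, and grouping the left-hand sum according to the value of $m = k+n$ produces the middle expression. The third expression is then the middle one after the cosmetic relabeling $m \to n$, $j \to k$.

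For the second identity, I would perform the complementary substitution on the outer sum: write $n = k + n'$ with $n' \ge 0$. This realises a bijection between $\{(k,n) : 0 \le k \le n\}$ and $\{(k,n') : k \ge 0,\ n' \ge 0\}$, under which $b_{k,n}$ becomes $b_{k,\, k+n'}$; renaming $n' \to n$ yields the right-hand side.

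The only conceptual issue is the legitimacy of the rearrangement. In the absolutely convergent case it is a direct application of Fubini--Tonelli with counting measure on $\mathbb{N}^{2}$; in the formal power-series context (which is how the lemma is actually used later), the coefficient of every $x^{N}$ on both sides is a \emph{finite} sum, so the identity reduces to finite combinatorial bookkeeping and no analytic hypothesis is required. I do not expect any genuine obstacle beyond carefully writing out the substitutions and confirming the ranges of the new indices.
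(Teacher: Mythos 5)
Your proof is correct; the paper itself gives no proof of this lemma, merely citing Rainville's \emph{Special Functions}, and your diagonal reindexing $(k,n)\mapsto(k,k+n)$ for the first identity and its inverse for the second is exactly the standard argument found there. The remark that the rearrangement is justified either by absolute convergence (Fubini--Tonelli on $\mathbb{N}^2$) or by finiteness of each coefficient in the formal power-series setting correctly addresses the only point requiring care.
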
 
 
\section{Solutions of the conformable fractional Gauss hypergeometric differential equation}
In our current study, we are interested to consider a generalization of
the differential equation \eqref{eq1.1} to fractional Gauss hypergeometric
differential equation, where the involving derivative is CFD. More precisely, we study the equation in the
form 
\begin{equation}\label{eq3.1}
x^{\alpha }\left( 1-x^{\alpha }\right) D^{\alpha }D^{\alpha }y+\alpha \left[
c-\left( \mu +\nu +1\right) x^{\alpha }\right] D^{\alpha }y-\alpha ^{2}\mu \nu y=0,
\end{equation}
where $\alpha \in (0,1]$ and $\mu ,\nu $ and $c$ are reals. \\
The new concept of fractional regular singular point together with the technique of fractional power series are used to solve the CFGHDE \eqref{eq3.1}. \\ 
Dividing \eqref{eq3.1} by $x^{\alpha }\left( 1-x^{\alpha }\right) $, we get
\begin{equation}\label{eq3.2}
D^{\alpha }D^{\alpha }y+\frac{\alpha \left\{ c-\left( \mu +\nu +1\right)
x^{\alpha }\right\} }{x^{\alpha }\left( 1-x^{\alpha }\right) }D^{\alpha }y-%
\frac{\alpha ^{2}\mu \nu }{x^{\alpha }\left( 1-x^{\alpha }\right) }y=0.
\end{equation}
Comparing \eqref{eq3.2} with \eqref{eq2.2}, we have
\begin{equation*}
P\left( x\right) =\frac{\alpha \left\{ c-\left( \mu +\nu +1\right)
x^{\alpha }\right\} }{x^{\alpha }\left( 1-x^{\alpha }\right) } \text{ and } Q\left(
x\right) =\frac{-\alpha ^{2}\mu \nu }{x^{\alpha }\left( 1-x^{\alpha }\right) 
}.
\end{equation*}
Clearly $x=0,~x=1$ and $x=\infty $ are $\alpha -$regular
singular points for \eqref{eq3.1}. \\
In 2020, the authors in \cite{hammad2019fractional} used the technique of fractional power series to
obtain the general solution of \eqref{eq3.1} about $x=0$ as
\begin{equation*}
y=A~_{2}F_{1}\left( \mu ,\nu ;c;x^{\alpha }\right) +B~x^{\alpha \left(
1-c\right) }~_{2}F_{1}\left( 1-c+\mu ,1-c+\nu ;2-c;x^{\alpha }\right),
\end{equation*}
where $A$ and $B$ are arbitrary constants and $_{2}F_{1}\left( \mu
,\nu ;c;x^{\alpha }\right) $ is CFGHF defined by
\begin{equation}\label{eq3.3}
_{2}F_{1}\left( \mu ,\nu ;c;x^{\alpha }\right) =\sum\limits_{n=0}^{\infty }%
\frac{\left( \mu \right) _{n}\left( \nu \right) _{n}}{\left( c\right) _{n}n!}%
x^{\alpha n};~~ \left\vert x^{\alpha }\right\vert <1.
\end{equation}
In fact, we will use a similar technique of \cite{hammad2019fractional}  to solve the equation \eqref{eq3.1} about the two $\alpha -$regular singular points $x=1$ and $x=\infty .$
\subsection{Solution of the CFGHE about $x=1$}
As $x=1$ is an $\alpha -$regular
singular point of \eqref{eq3.1}, therefore, the solution of \eqref{eq3.1} can be obtained in a series of powers of $\left( x^{\alpha }-1\right) $ as follows:

Taking $x^{\alpha }=1-t^{\alpha },$ this transfers the point $x=1$ to the point $t=0$
and therefore, we obtain the series solution of the following transformed
CFDE in terms of the series of powers of $ 
t^{\alpha }$:
\begin{equation}\label{eq3.5}
t^{\alpha }\left( 1-t^{\alpha }\right) D^{\alpha }D^{\alpha }y\left(
t\right) +\alpha \left\{ \left[ \mu +\nu +1-c\right] -\left( \mu +\nu
+1\right) t^{\alpha }\right\} D^{\alpha }y\left( t\right) -\alpha ^{2}\mu
\nu y\left( t\right) =0.
\end{equation}

Putting $c^{\prime }=\mu +\nu +1-c,$ in \eqref{eq3.5}, we get
\begin{equation}\label{eq3.6}
t^{\alpha }\left( 1-t^{\alpha }\right) D^{\alpha }D^{\alpha }y\left(
t\right) +\alpha \left\{ c^{\prime }-\left( \mu +\nu +1\right) t^{\alpha
}\right\} D^{\alpha }y\left( t\right) -\alpha ^{2}\mu \nu y\left( t\right) =0.
\end{equation}

This conformable fractional differential equation is similar to  CFGHE \eqref{eq3.1}.
So, the two linearly independent solutions of \eqref{eq3.6} can be stated in the form
\begin{equation}\label{eq3.7}
y_{1}=~_{2}F_{1}\left( \mu ,\nu ;c^{\prime };t^{\alpha }\right) \text{ and }
~y_{2}=t^{\alpha \left( 1-c\right) }~_{2}F_{1}\left( 1-c^{\prime }+\mu
,1-c^{\prime }+\nu ;2-c^{\prime };t^{\alpha }\right) 
\end{equation}

Now, replacing $c^{\prime }$ by $\left( \mu +\nu +1-c\right) $ and $
t^{\alpha }$ by $\left( 1-x^{\alpha }\right) $ in \eqref{eq3.7}, we get
\begin{equation*}
y_{1}=~_{2}F_{1}\left( \mu ,\nu ;\mu +\nu +1-c;t^{\alpha }\right)
\end{equation*}
and 
\begin{equation*}
y_{2}=\left( 1-x^{\alpha }\right) ^{\left( c-\mu -\nu \right)
}~_{2}F_{1}\left( c-\nu ,c-\mu ;c-\mu -\nu +1;1-x^{\alpha }\right) 
\end{equation*}
Thus the general solution of equation \eqref{eq3.1} about $x=1$ is given by
\begin{equation}\label{eq3.8}
\begin{split}
y=&A~_{2}F_{1}\left( \mu ,\nu ;\mu +\nu +1-c;t^{\alpha }\right) \\
&+B~\left(
1-x^{\alpha }\right) ^{\left( c-\mu -\nu \right) }~_{2}F_{1}\left( c-\nu
,c-\mu ;c-\mu -\nu +1;1-x^{\alpha }\right),
\end{split}
\end{equation}
where $A$ and $B$ are arbitrary constants.
\subsection{Solution of the CFGHE about $x=\infty $}
As $x=\infty $ is an $\alpha -$regular singular point of \eqref{eq3.1}, thus, the solution of \eqref{eq3.1} can be obtained in a series about $x=\infty $ by
putting  $x^{\alpha }=\frac{1}{\zeta ^{\alpha }}$ in \eqref{eq3.1}. Therefore, 
\begin{equation}\label{eq3.9}
D_{x}^{\alpha }~y=-\zeta
^{2\alpha }D_{\zeta }^{\alpha }~y\text{ \ and }D_{x}^{\alpha }D_{x}^{\alpha
}~y=\left[ 2\alpha \zeta ^{3\alpha }D_{\zeta }^{\alpha }~y+\zeta ^{4\alpha
}~D_{\zeta }^{\alpha }D_{\zeta }^{\alpha }~y\right]. 
\end{equation}

In view of \eqref{eq3.1}, we get 
\begin{equation}\label{eq3.10}
\zeta ^{2\alpha }\left( 1-\zeta ^{\alpha }\right) D_{\zeta }^{\alpha
}D_{\zeta }^{\alpha }~y+\alpha \left\{ 2\zeta ^{\alpha }\left( 1-\zeta
^{\alpha }\right) +c\zeta ^{2\alpha }-\zeta ^{\alpha }\left( \mu +\nu
+1\right) \right\} D_{\zeta }^{\alpha }~y\text{ }+\alpha ^{2}\mu \nu y=0
\end{equation}

Now, to find the solution, we proceed as follows.
Let $y=\sum\limits_{n=0}^{\infty }a_{n}\zeta ^{\alpha \left( s+n\right)
};~a_{0}\neq 0$ be the series solution of equation \eqref{eq3.10} about $\zeta =0.$ 
Then from the basic properties of the CFD we
get 
\begin{equation*}
D_{\zeta }^{\alpha }y=\sum\limits_{n=0}^{\infty }a_{n}~\alpha \left(
s+n\right) \zeta ^{\alpha \left( s+n-1\right) } \text{\ and }  
D_{\zeta }^{\alpha }D_{\zeta }^{\alpha }y=\sum\limits_{n=0}^{\infty
}a_{n}~\alpha ^{2}\left( s+n\right) \left( s+n-1\right) \zeta ^{\alpha
\left( s+n-2\right) }
\end{equation*}

Thus, owing to \eqref{eq3.10}, we have
\begin{equation*}
\begin{split}
\zeta ^{2\alpha }\left( 1-\zeta ^{\alpha }\right) &
\sum\limits_{n=0}^{\infty }a_{n}~\alpha ^{2}\left( s+n\right) \left(
s+n-1\right) \zeta ^{\alpha \left( s+n-2\right) }+\alpha \left\{ 2\zeta
^{\alpha }\left( 1-\zeta ^{\alpha }\right) +c\zeta ^{2\alpha }-\zeta
^{\alpha }\left( \mu +\nu +1\right) \right\}  \\
& \times \sum\limits_{n=0}^{\infty }a_{n}~\alpha \left( s+n\right) \zeta
^{\alpha \left( s+n-1\right) }+\alpha ^{2}\mu \nu \sum\limits_{n=0}^{\infty
}a_{n}\zeta ^{\alpha \left( s+n\right) }=0,
\end{split}
\end{equation*}
therefore,
\begin{equation*}
\begin{split}
&\sum\limits_{n=0}^{\infty }\alpha ^{2}a_{n}\left[ \left(
s+n\right) \left( s+n-1\right) +2~\left( s+n\right) -\left( \mu +\nu
+1\right) \left( s+n\right) +\mu \nu \right] \zeta ^{\alpha \left(
s+n\right) } \\
-& \sum\limits_{n=0}^{\infty }\alpha ^{2}a_{n}\left[ \left( s+n\right)
\left( s+n-1\right) +2~\left( s+n\right) -c~\left( s+n\right) \right] \zeta
^{\alpha \left( s+n+1\right) }=0.
\end{split}
\end{equation*}

Then we have
\begin{equation*}
\begin{split}
\alpha ^{2}& a_{0}\left[ s\left( s-1\right) +2~s-s\left( \mu
+\nu +1\right) +\mu \nu \right] \zeta ^{\alpha s} \\
+& \sum\limits_{n=1}^{\infty }\alpha ^{2}a_{n}\left[ \left( s+n\right)
\left( s+n-1\right) +2~\left( s+n\right) -\left( \mu +\nu +1\right) \left(
s+n\right) +\mu \nu \right] \zeta ^{\alpha \left( s+n\right) } \\
-& \sum\limits_{n=0}^{\infty }\alpha ^{2}a_{n}\left[ \left( s+n\right)
\left( s+n-1\right) +2~\left( s+n\right) -c~\left( s+n\right) \right] \zeta
^{\alpha \left( s+n+1\right) }=0.
\end{split}
\end{equation*}

A shift of index yields 
\begin{equation}\label{eq3.11}
\begin{split}
& \alpha ^{2}a_{0}\left[ s\left( s-1\right) +2~s-s\left( \mu +\nu +1\right)
+\mu \nu \right] \zeta ^{\alpha s} \\
& +\alpha ^{2}\sum\limits_{n=0}^{\infty }a_{n+1}\left[ \left( s+n+1\right)
\left( s+n\right) +2~\left( s+n+1\right) -\left( \mu +\nu +1\right) \left(
s+n+1\right) +\mu \nu \right]  \\
& -a_{n}\left[ \left( s+n\right) \left( s+n-1\right) +2~\left( s+n\right)
-c~\left( s+n\right) \right] \zeta ^{\alpha \left( s+n+1\right) }=0
\end{split}
\end{equation}

Equating the coefficients of $\zeta ^{\alpha s}$ to zero in \eqref{eq3.11}, we get the following indicial equation 
\begin{equation}\label{eq3.12}
s^{2}-s\left( \mu +\nu \right) +\mu \nu =0
\end{equation}

This equation \eqref{eq3.12} has two indicial roots $s=s_{1}=\mu $ and $
s=s_{2}=\nu $. \\
Again, equating to zero the coefficient of  $\zeta ^{\alpha (s+n+1)}$ in \eqref{eq3.11}, yields the recursion relation for $a_{n}$
\begin{equation}\label{eq3.13}
a_{n+1}=\frac{\left[ \left( s+n\right) \left( s+n-1\right) +2~\left(
s+n\right) -c~\left( s+n\right) \right] }{\left[ \left( s+n+1\right) \left(
s+n\right) +2~\left( s+n+1\right) -\left( \mu +\nu +1\right) \left(
s+n+1\right) +\mu \nu \right] }a_{n},
\end{equation}
or 
\begin{equation}\label{eq3.14}
a_{n+1}=\frac{\left( s+n\right) \left( s+n+1-c\right) }{\left( s+n+1\right) 
\left[ \left( s+n+1\right) -\mu -\nu \right] +\mu \nu }a_{n}
\end{equation}

To find the first solution of \eqref{eq3.10}, putting $s=\mu $ in \eqref{eq3.14}, we get 
\begin{equation*}
a_{n+1} =\frac{\left( \mu +n\right) \left( \mu +n+1-c\right) }{\left( \mu
+n+1\right) \left[ \left( n+1\right) -\nu \right] +\mu \nu }a_{n} 
=\frac{\left( \mu +n\right) \left( \mu +n+1-c\right) }{\left( n+1\right) 
\left[ \left( n+1\right) +\mu -\nu \right] }a_{n}.
\end{equation*}

Note that if $n=0,$ one can see
\begin{equation*}
a_{1}=\frac{\left( \mu \right) \left( \mu -c+1\right) }{\left[ \mu -\nu +1
\right] }a_{0}
\end{equation*}
and  for $n=1,$ we obtain
\begin{equation*}
a_{2} =\frac{\left( \mu +1\right) \left( \mu -c+2\right) }{\left[ \mu -\nu
+2\right] }a_{1} 
=\frac{\left( \mu \right) \left( \mu +1\right) \left( \mu -c+1\right)
\left( \mu -c+2\right) }{2\left[ \mu -\nu +1\right] \left[ \mu -\nu +2\right]
}a_{0}
\end{equation*}
Using the Pochhammer symbol we have 
\begin{equation*}
a_{2}=\frac{\left( \mu \right) _{2}\left( \mu -c+1\right) _{2}}{2!\left( \mu
-\nu +1\right) _{2}}a_{0}.
\end{equation*}
In general, we may write 
\begin{equation}\label{eq3.15}
a_{n}=\frac{\left( \mu \right) _{n}\left( \mu -c+1\right) _{n}}{n!\left( \mu
-\nu +1\right) _{n}}a_{0}.
\end{equation}
Letting $a_{0}=A,$ the first solution $y_{1}$ is given by
\begin{equation*}
\begin{split}
y_{1} =&A\sum\limits_{n=0}^{\infty }\frac{\left( \mu \right) _{n}\left( \mu
-c+1\right) _{n}}{\left( \mu -\nu +1\right) _{n}}\frac{^{\zeta ^{\alpha
\left( \mu +n\right) }}}{n!} 
=A\zeta ^{\alpha \mu }~_{2}F_{1}\left( \mu ,\mu -c+1;\mu -\nu +1;\zeta
^{\alpha }\right)  \\
=&Ax^{-\alpha \mu }~_{2}F_{1}\left( \mu ,\mu -c+1;\mu -\nu +1;\frac{1}{%
x^{\alpha }}\right) 
\end{split}
\end{equation*}

To find the second solution of \eqref{eq3.10}, putting $s=\nu $ in \eqref{eq3.14}, we have 
\begin{equation*}
a_{n+1} =\frac{\left( \nu +n\right) \left( \nu +n+1-c\right) }{\left( \nu
+n+1\right) \left[ \left( n+1\right) -\mu \right] +\mu \nu }a_{n} 
=\frac{\left( \nu +n\right) \left( \nu +n+1-c\right) }{\left( n+1\right) %
\left[ \left( n+1\right) +\nu -\mu \right] }a_{n},
\end{equation*}
from which we get
\begin{equation*}
a_{1}=\frac{\left( \nu \right) \left( \nu -c+1\right) }{\left[ \nu -\mu +1%
\right] }a_{0}.
\end{equation*}

Thus
\begin{equation*}
a_{2} =\frac{\left( \nu +1\right) \left( \nu -c+2\right) }{\left[ \nu -\mu
+2\right] }a_{1} 
=\frac{\left( \nu \right) \left( \nu +1\right) \left( \nu -c+1\right)
\left( \nu -c+2\right) }{2\left[ \nu -\mu +1\right] \left[ \nu -\mu +2\right]
}a_{0}.
\end{equation*}

Again by Pochhammer symbol yields
\begin{equation*}
a_{2}=\frac{\left( \nu \right) _{2}\left( \nu -c+1\right) _{2}}{2!\left( \nu
-\mu +1\right) _{2}}a_{0}
\end{equation*}
and in general
\begin{equation}\label{eq3.16}
a_{n}=\frac{\left( \nu \right) _{n}\left( \nu -c+1\right) _{n}}{n!\left( \nu
-\mu +1\right) _{n}}a_{0}
\end{equation}

Putting $a_{0}=B,$ the second solution $y_{2}$ is given by
\begin{equation*}
\begin{split}
y_{2} =&B\sum\limits_{n=0}^{\infty }\frac{\left( \nu \right) _{n}\left( \nu
-c+1\right) _{n}}{\left( \nu -\mu +1\right) _{n}}\frac{^{\zeta ^{\alpha
\left( \nu +n\right) }}}{n!}
=B\zeta ^{\alpha \nu }~_{2}F_{1}\left( \nu ,\nu -c+1;\nu -\mu +1;\zeta
^{\alpha }\right)  \\
=&Bx^{-\alpha \nu }~_{2}F_{1}\left( \nu ,\nu -c+1;\nu -\mu +1;\frac{1}{%
x^{\alpha }}\right) 
\end{split}
\end{equation*}

Therefore, the general solution of \eqref{eq3.1} about $x=\infty $ is 
\begin{equation*}
y=Ax^{-\alpha \mu }~_{2}F_{1}\left( \mu ,\mu -c+1;\mu -\nu +1;\frac{1}{%
x^{\alpha }}\right) +Bx^{-\alpha \nu }~_{2}F_{1}\left( \nu ,\nu -c+1;\nu
-\mu +1;\frac{1}{x^{\alpha }}\right) 
\end{equation*}
where $A$ and $B$ are arbitrary constants.
\begin{rem}
It is worthy to mention that the presented CFDE \eqref{eq3.10} is distinct to that one which was treated in \cite{hammad2019fractional,ali2020solution}. In fact \eqref{eq3.10} extended the Gauss hypergeometric differential equation given in \cite{rao2018generalized} to the conformable fractional context.
\end{rem}
\section{Generating functions}
Generating functions  are important way to transform formal power series into
functions and to analyze asymptotic properties of sequences. In what follows we characterize the CFGHF by means of various generating functions.
\begin{thm}\label{Gt1}
For $\alpha \in (0,1],$ the following generating function holds true
\begin{equation}\label{eq4.1}
\sum\limits_{m=0}^{\infty }\left( \mu \right) _{m}~~_{2}F_{1}\left( \mu
+m,\nu ;c;x^{\alpha }\right) .\frac{t^{\alpha m}}{m!}=\left( 1-t^{\alpha
}\right) ^{-\mu }~_{2}F_{1}\left( \mu ,\nu ;c;\frac{x^{\alpha }}{1-t^{\alpha
}}\right),
\end{equation}
where $ \left\vert x^{\alpha }\right\vert <1,$ and $\left\vert
t^{\alpha }\right\vert <1$.
\end{thm}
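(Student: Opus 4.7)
The plan is to mirror the classical proof of the analogous generating function for the ordinary $_{2}F_{1}$, but carried out in the fractional series framework of \eqref{eq3.3}. The two tools I expect to drive everything are the Pochhammer identity $(\mu)_m(\mu+m)_n=(\mu)_{m+n}$ together with the binomial series $\sum_{m\ge 0}(\lambda)_m t^{\alpha m}/m!=(1-t^\alpha)^{-\lambda}$, valid for $|t^\alpha|<1$. The interchange of the two nested summations can be handled either by appealing to absolute convergence on a suitable subregion or by invoking Lemma \ref{Lem1new}.

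Concretely, I would first expand the inner CFGHF on the left-hand side using \eqref{eq3.3} so that
\begin{equation*}
\sum_{m=0}^{\infty}(\mu)_m\,{}_{2}F_{1}(\mu+m,\nu;c;x^{\alpha})\frac{t^{\alpha m}}{m!}=\sum_{m=0}^{\infty}\sum_{n=0}^{\infty}\frac{(\mu)_m(\mu+m)_n(\nu)_n}{(c)_n\,n!\,m!}\,x^{\alpha n}\,t^{\alpha m}.
\end{equation*}
Collapsing $(\mu)_m(\mu+m)_n=(\mu)_{m+n}$ rewrites the summand in a form in which $m$ and $n$ couple only through a single Pochhammer factor. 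I would then swap the order of summation (citing Lemma \ref{Lem1new} to justify the rearrangement), split $(\mu)_{m+n}=(\mu)_n(\mu+n)_m$, and pull all $n$-dependent factors outside the inner sum over $m$. The inner sum
\begin{equation*}
\sum_{m=0}^{\infty}\frac{(\mu+n)_m}{m!}\,t^{\alpha m}=(1-t^{\alpha})^{-(\mu+n)}
\end{equation*}
is exactly the fractional binomial expansion. Finally, factoring out $(1-t^{\alpha})^{-\mu}$ and absorbing the remaining $(1-t^{\alpha})^{-n}$ into the power of $x^{\alpha}$ produces
\begin{equation*}
(1-t^{\alpha})^{-\mu}\sum_{n=0}^{\infty}\frac{(\mu)_n(\nu)_n}{(c)_n\,n!}\left(\frac{x^{\alpha}}{1-t^{\alpha}}\right)^{\!n},
\end{equation*}
which by \eqref{eq3.3} is precisely the right-hand side of \eqref{eq4.1}.

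The algebraic steps are routine, so the main obstacle is really one of analytic bookkeeping: the stated hypotheses $|x^{\alpha}|<1$ and $|t^{\alpha}|<1$ are not quite enough on their own to guarantee absolute convergence of the rearranged double series and of the final ${}_{2}F_{1}$, which additionally needs $|x^{\alpha}/(1-t^{\alpha})|<1$. I would therefore either restrict to a subregion (for instance $|x^{\alpha}|+|t^{\alpha}|<1$) where Fubini applies cleanly and then extend by analytic continuation in the parameters, or treat the identity as a formal-series equality in $x^{\alpha}$ and $t^{\alpha}$, where the manipulations above are coefficient-wise unambiguous and Lemma \ref{Lem1new} gives the rearrangement for free.
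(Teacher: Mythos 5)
Your argument is exactly the paper's proof: expand the inner ${}_{2}F_{1}$ via \eqref{eq3.3}, interchange the summations, use $(\mu)_m(\mu+m)_n=(\mu)_{m+n}=(\mu)_n(\mu+n)_m$, and evaluate the inner sum with the binomial series $\sum_{m\ge 0}(\mu+n)_m t^{\alpha m}/m!=(1-t^{\alpha})^{-(\mu+n)}$. Your additional remarks on the convergence region and the formal-series reading are a sensible refinement the paper glosses over, but the route is the same.
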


\begin{proof}
For convenience, let $\Im $ denote the left-hand side of \eqref{eq4.1}.  In view of \eqref{eq3.3}, we have
\begin{equation}\label{eq4.2}
\Im =\sum\limits_{m=0}^{\infty }\left( \mu \right) _{m}~~\left\{
\sum\limits_{n=0}^{\infty }\frac{\left( \mu +m\right) _{n}\left( \nu \right)
_{n}}{\left( c\right) _{n}}\frac{x^{\alpha n}}{n!}\right\} .\frac{t^{\alpha
m}}{m!}.
\end{equation}

Changing the order of summations in \eqref{eq4.2} and make use of identity $
\left( \mu \right) _{m}\left( \mu +m\right) _{n}=\left( \mu \right)
_{m+n}=\left( \mu \right) _{n}\left( \mu +n\right) _{m},$ yields 
\begin{equation*}
\Im =\sum\limits_{n=0}^{\infty }\frac{\left( \mu \right) _{n}\left( \nu
\right) _{n}}{\left( c\right) _{n}}\frac{x^{\alpha n}}{n!}%
.\sum\limits_{m=0}^{\infty }\frac{\left( \mu +n\right) _{m}}{m!}~t^{\alpha
m}~.
\end{equation*}

Using  the equality   $\sum\limits_{m=0}^{\infty }\frac{\left( \mu
+n\right) _{m}}{m!}~t^{\alpha m}=\left( 1-t^{\alpha }\right) ^{-\left( \mu
+n\right) };~~\left\vert t^{\alpha }\right\vert <1$ and the definition \eqref{eq3.3} immediately leads to the required result.
\end{proof}

\begin{thm}\label{Gt2}
For $\alpha \in (0,1],$ we have the following relation
\begin{equation}\label{eq4.3}
\sum\limits_{m=0}^{\infty }\left( \mu \right) _{m}~~_{2}F_{1}\left( -m,\nu
;c;x^{\alpha }\right) .\frac{t^{\alpha m}}{m!}=\left( 1-t^{\alpha }\right)
^{-\mu }~_{2}F_{1}\left( \mu ,\nu ;c;\frac{-x^{\alpha }t^{\alpha }}{%
1-t^{\alpha }}\right) 
\end{equation}
where $  \left\vert x^{\alpha }\right\vert <1,$ and
$\left\vert t^{\alpha }\right\vert <1$
\end{thm}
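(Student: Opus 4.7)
The plan is to imitate the strategy used for Theorem \ref{Gt1}: expand the inner ${}_2F_1$ using \eqref{eq3.3}, interchange the order of summation via Lemma \ref{Lem1new}, and then recognise a binomial series together with another CFGHF on the right.

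First I would substitute the series definition \eqref{eq3.3} of ${}_2F_1(-m,\nu;c;x^\alpha)$ into the left-hand side. Since $(-m)_n=0$ for $n>m$, the inner sum terminates at $n=m$, giving a genuine finite inner sum. The key algebraic identity I would then apply is
\begin{equation*}
\frac{(-m)_n}{m!}=\frac{(-1)^n}{(m-n)!},\qquad 0\le n\le m,
\end{equation*}
which turns the left-hand side into
\begin{equation*}
\sum_{m=0}^{\infty}\sum_{n=0}^{m}(\mu)_m\,\frac{(-1)^n(\nu)_n}{(c)_n\,n!\,(m-n)!}\,x^{\alpha n}t^{\alpha m}.
\end{equation*}

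Next I would apply the second relation of Lemma \ref{Lem1new} (with the substitution $m=n+k$) to swap the summation indices, so that $m$ is replaced by $n+k$ with $k\ge 0$. Using the Pochhammer identity $(\mu)_{n+k}=(\mu)_n(\mu+n)_k$, the double sum factorises as
\begin{equation*}
\sum_{n=0}^{\infty}\frac{(\mu)_n(\nu)_n}{(c)_n\,n!}\,(-x^{\alpha}t^{\alpha})^{n}\sum_{k=0}^{\infty}\frac{(\mu+n)_k}{k!}\,t^{\alpha k}.
\end{equation*}

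Finally, I would recognise the inner $k$-sum as the binomial series
$\sum_{k=0}^{\infty}\frac{(\mu+n)_k}{k!}t^{\alpha k}=(1-t^{\alpha})^{-(\mu+n)}$ for $|t^{\alpha}|<1$, pull out the factor $(1-t^{\alpha})^{-\mu}$ independent of $n$, and fold the factor $(1-t^{\alpha})^{-n}$ into the bracket to form $\bigl(\tfrac{-x^{\alpha}t^{\alpha}}{1-t^{\alpha}}\bigr)^{n}$. Invoking \eqref{eq3.3} once more yields the right-hand side of \eqref{eq4.3}. The only non-routine point is the manipulation $\tfrac{(-m)_n}{m!}=\tfrac{(-1)^n}{(m-n)!}$ together with a careful choice of the indexing substitution in Lemma \ref{Lem1new}; the convergence conditions $|x^{\alpha}|<1$ and $|t^{\alpha}|<1$, plus $|x^{\alpha}t^{\alpha}|<|1-t^{\alpha}|$ implicit at the end, justify the rearrangement of the absolutely convergent double series.
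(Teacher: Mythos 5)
Your proposal is correct and follows essentially the same route as the paper's own proof: expand via \eqref{eq3.3}, truncate the inner sum using $(-m)_n=0$ for $n>m$, apply the identity $(-m)_n=\frac{(-1)^nm!}{(m-n)!}$, reindex with the second relation of Lemma \ref{Lem1new}, factor with $(\mu)_{n+k}=(\mu)_n(\mu+n)_k$, and sum the binomial series $(1-t^{\alpha})^{-(\mu+n)}$. Your explicit remark on the convergence conditions justifying the rearrangement is a small addition the paper leaves implicit.
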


\begin{proof}
For short, set  $\Im $ to denote the left-hand side of \eqref{eq4.3}. Using \eqref{eq3.3}, one gets
\begin{equation}\label{eq4.4}
\Im =\sum\limits_{m=0}^{\infty }\left( \mu \right) _{m}~\left\{
\sum\limits_{n=0}^{\infty }\frac{\left( -m\right) _{n}\left( \nu \right) _{n}%
}{\left( c\right) _{n}}~\frac{x^{\alpha n}}{n!}\right\} .\frac{t^{\alpha m}}{%
m!}
\end{equation}

Since $\left( -m\right) _{n}=0$ if $n>m,$ then we may write
\begin{equation*}
\Im =\sum\limits_{m=0}^{\infty }\sum\limits_{n=0}^{m}\frac{\left( \mu
\right) _{m}~}{m!}\frac{\left( -m\right) _{n}\left( \nu \right) _{n}}{\left(
c\right) _{n}n!}x^{\alpha n}t^{\alpha m}
\end{equation*}

Using the congruence relation $\left( -m\right) _{n}=\frac{\left( -1\right)
^{n}m!}{\left( m-n\right) !},$ we get
\begin{equation}\label{eq4.5}
\Im =\sum\limits_{m=0}^{\infty }\sum\limits_{n=0}^{m}~\frac{\left( -1\right)
^{n}\left( \mu \right) _{m}\left( \nu \right) _{n}}{\left( c\right)
_{n}\left( m-n\right) !n!}x^{\alpha n}t^{\alpha m}
\end{equation}

Using lemma \ref{Lem1new}, equation \eqref{eq4.5} becomes
\begin{equation}\label{eq4.6}
\Im =\sum\limits_{m=0}^{\infty }\sum\limits_{n=0}^{\infty }~\frac{\left(
-1\right) ^{n}\left( \mu \right) _{n+m}\left( \nu \right) _{n}}{\left(
c\right) _{n}m!n!}x^{\alpha n}t^{\alpha \left( n+m\right) }
\end{equation}

Changing the order of summations in \eqref{eq4.6} and make use of identity $%
\left( \mu \right) _{n+m}=\left( \mu \right) _{n}\left( \mu +n\right) _{m},$
we get
\begin{equation*}
\Im =\sum\limits_{n=0}^{\infty }~\frac{\left( \mu \right) _{n}\left( \nu
\right) _{n}}{\left( c\right) _{n}n!}\left( -x^{\alpha }t^{\alpha }\right)
^{n}.\sum\limits_{m=0}^{\infty }~\frac{\left( \mu +n\right) _{m}}{m!}%
t^{\alpha m}
\end{equation*}

Using the binomial relation $\sum\limits_{m=0}^{\infty }~\frac{\left( \mu
+n\right) _{m}}{m!}t^{\alpha m}=\left( 1-t^{\alpha }\right) ^{-\left( \mu
+n\right) },~\ \left( \left\vert t^{\alpha }\right\vert <1\right) ,$ it follows that
\begin{equation*}
\begin{split}
\Im &=\sum\limits_{n=0}^{\infty }~\frac{\left( \mu \right) _{n}\left( \nu
\right) _{n}}{\left( c\right) _{n}n!}\left( -x^{\alpha }t^{\alpha }\right)
^{n}.\left( 1-t^{\alpha }\right) ^{-\left( \mu +n\right) }=\left(
1-t^{\alpha }\right) ^{-\mu }\sum\limits_{n=0}^{\infty }~\frac{\left( \mu
\right) _{n}\left( \nu \right) _{n}}{\left( c\right) _{n}n!}\left( \frac{%
-x^{\alpha }t^{\alpha }}{1-t^{\alpha }}\right) ^{n} \\
&=\left( 1-t^{\alpha }\right) ^{-\mu }~_{2}F_{1}\left( \mu ,\nu ;c;\frac{%
-x^{\alpha }t^{\alpha }}{1-t^{\alpha }}\right).
\end{split}
\end{equation*}

\end{proof}

\begin{thm}\label{Gt3}
For $\alpha \in (0,1],$ the following generating relation is valid
\begin{equation}\label{eq4.7}
\begin{split}
\sum\limits_{m=0}^{\infty }\frac{\left( \mu \right) _{m}\left( \nu \right)
_{m}}{\left( c\right) _{m}}~_{2}F_{1}\left( \mu +m,\nu +m;c+m;x^{\alpha
}\right) .\frac{t^{\alpha m}}{m!} =&~_{2}F_{1}\left( \mu ,\nu ;c;x^{\alpha
}+t^{\alpha }\right) ,
\end{split}
\end{equation}
where  $ \left\vert x^{\alpha }\right\vert <1,$ $ \left\vert t^{\alpha
}\right\vert <1,$ and $\left\vert x^{\alpha }+t^{\alpha }\right\vert <1$.
\end{thm}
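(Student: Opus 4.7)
The plan is to mimic the strategy used in Theorems \ref{Gt1} and \ref{Gt2}: denote the left-hand side by $\Im$, expand the inner CFGHF via its series representation \eqref{eq3.3}, rearrange the resulting double sum using Lemma \ref{Lem1new} and Pochhammer identities, and recognize the outcome as $_{2}F_{1}(\mu,\nu;c;x^{\alpha}+t^{\alpha})$.

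First I would write
\begin{equation*}
\Im=\sum_{m=0}^{\infty}\sum_{n=0}^{\infty}\frac{(\mu)_{m}(\nu)_{m}}{(c)_{m}\,m!}\cdot\frac{(\mu+m)_{n}(\nu+m)_{n}}{(c+m)_{n}\,n!}\,x^{\alpha n}t^{\alpha m}.
\end{equation*}
The key algebraic step is to collapse the Pochhammer fractions using the identities
\begin{equation*}
(\mu)_{m}(\mu+m)_{n}=(\mu)_{m+n},\qquad (\nu)_{m}(\nu+m)_{n}=(\nu)_{m+n},\qquad (c)_{m}(c+m)_{n}=(c)_{m+n},
\end{equation*}
which turn $\Im$ into the symmetric form
\begin{equation*}
\Im=\sum_{m=0}^{\infty}\sum_{n=0}^{\infty}\frac{(\mu)_{m+n}(\nu)_{m+n}}{(c)_{m+n}\,m!\,n!}\,x^{\alpha n}t^{\alpha m}.
\end{equation*}

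Next I would apply the diagonal rearrangement from Lemma \ref{Lem1new}, setting $k=m+n$ and $j=n$, to obtain
\begin{equation*}
\Im=\sum_{k=0}^{\infty}\frac{(\mu)_{k}(\nu)_{k}}{(c)_{k}}\sum_{j=0}^{k}\frac{x^{\alpha j}t^{\alpha(k-j)}}{j!\,(k-j)!}=\sum_{k=0}^{\infty}\frac{(\mu)_{k}(\nu)_{k}}{(c)_{k}\,k!}\,(x^{\alpha}+t^{\alpha})^{k},
\end{equation*}
where the inner sum is evaluated by the finite binomial theorem. Recognizing the last series as the CFGHF definition \eqref{eq3.3} with argument $x^{\alpha}+t^{\alpha}$ yields the claim, with the hypothesis $|x^{\alpha}+t^{\alpha}|<1$ guaranteeing convergence.

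The only genuine obstacle is justifying the interchange of summations and the regrouping via Lemma \ref{Lem1new}: one should note that under the stated bounds $|x^{\alpha}|<1$, $|t^{\alpha}|<1$, $|x^{\alpha}+t^{\alpha}|<1$, the double series is absolutely convergent, so Fubini-type rearrangement and the diagonal reindexing are legitimate. Everything else is routine Pochhammer bookkeeping parallel to the proofs of Theorems \ref{Gt1} and \ref{Gt2}.
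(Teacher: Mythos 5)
Your proposal is correct and follows essentially the same route as the paper's own proof: expand the left-hand side as a double series, collapse the Pochhammer products via $(\mu)_m(\mu+m)_n=(\mu)_{m+n}$, reindex diagonally using Lemma \ref{Lem1new}, and finish with the binomial theorem. Your added remark on absolute convergence justifying the rearrangement is a small but welcome refinement that the paper leaves implicit.
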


\begin{proof}
Let $\Im $ denote the left-hand side of \eqref{eq4.7}, then using \eqref{eq3.3}, we obtain 
\begin{equation*}
\Im =\sum\limits_{m=0}^{\infty }\frac{\left( \mu \right) _{m}\left( \nu
\right) _{m}}{\left( c\right) _{m}}\left\{ \sum\limits_{n=0}^{\infty }\frac{%
\left( \mu +m\right) _{n}\left( \nu +m\right) _{n}}{\left( c+m\right) _{n}}~%
\frac{x^{\alpha n}}{n!}\right\} .\frac{t^{\alpha m}}{m!}
\end{equation*}

With the help of  $\left( \mu \right) _{n+m}=\left( \mu \right)
_{m}\left( \mu +m\right) _{n},$ we get
\begin{equation*}
\Im =\sum\limits_{m=0}^{\infty }\sum\limits_{n=0}^{\infty }\frac{\left( \mu
\right) _{m+n}\left( \nu \right) _{m+n}}{\left( c\right) _{m+n}n!m!}%
x^{\alpha n}t^{\alpha m}.
\end{equation*}

Using lemma \ref{Lem1new}, we have 
\begin{eqnarray*}
\Im  &=&\sum\limits_{m=0}^{\infty }\sum\limits_{n=0}^{m}\frac{\left( \mu
\right) _{m}\left( \nu \right) _{m}}{\left( c\right) _{m}n!\left( m-n\right)
!}x^{\alpha n}t^{\alpha \left( m-n\right) } \\
&=&\sum\limits_{m=0}^{\infty }\frac{\left( \mu \right) _{m}\left( \nu
\right) _{m}}{\left( c\right) _{m}m!}\sum\limits_{n=0}^{m}\frac{m!}{n!\left(
m-n\right) !}x^{\alpha n}t^{\alpha \left( m-n\right) }.
\end{eqnarray*}

The binomial theorem immediately gives
\begin{eqnarray*}
\Im  &=&\sum\limits_{m=0}^{\infty }\frac{\left( \mu \right) _{m}\left( \nu
\right) _{m}}{\left( c\right) _{m}m!}\left( x^{\alpha }+t^{\alpha }\right)
^{m} \\
&=&~_{2}F_{1}\left( \mu ,\nu ;c;x^{\alpha }+t^{\alpha }\right) 
\end{eqnarray*}
as required.
\end{proof}
\section{Transmutation formulas and differential forms}
\subsection{Transmutation formulas}
\begin{thm}\label{th4.1}
For $\left\vert x^{\alpha}\right\vert <1$ and $\left\vert \frac{x^{\alpha }}{%
1-x^{\alpha }}\right\vert <1$, the following identity is satisfied
\begin{equation}\label{eq5.1}
_{2}F_{1}\left( \mu ,\nu ;c;x^{\alpha }\right) =\left( 1-x^{\alpha }\right)
^{-\mu }~_{2}F_{1}\left( \mu ,c-\nu ;c;\frac{-x^{\alpha }}{1-x^{\alpha }}%
\right). 
\end{equation}
\end{thm}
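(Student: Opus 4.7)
The plan is to mimic the series-manipulation strategy already employed in the proofs of Theorems \ref{Gt1}--\ref{Gt3}: start from the right-hand side of \eqref{eq5.1}, expand every factor as a (possibly infinite) series, combine Pochhammer symbols, and rearrange the resulting double sum via Lemma \ref{Lem1new}. Denote the right-hand side of \eqref{eq5.1} by $\Im$. Applying definition \eqref{eq3.3} to the inner $_{2}F_{1}$ and extracting $(-x^{\alpha})^{n}$ from the $n$th term (so that the $(1-x^{\alpha})^{-n}$ produced by the argument $-x^{\alpha}/(1-x^{\alpha})$ combines with the prefactor), I obtain
$$\Im=\sum_{n=0}^{\infty}\frac{(\mu)_{n}(c-\nu)_{n}(-1)^{n}}{(c)_{n}\,n!}\,x^{\alpha n}(1-x^{\alpha})^{-(\mu+n)}.$$

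Next, I would apply the generalized binomial expansion $(1-x^{\alpha})^{-(\mu+n)}=\sum_{k=0}^{\infty}\frac{(\mu+n)_{k}}{k!}x^{\alpha k}$ (valid for $|x^{\alpha}|<1$) and use the telescoping identity $(\mu)_{n}(\mu+n)_{k}=(\mu)_{n+k}$ to fold everything into a double series in $n$ and $k$. Regrouping by the total power $m=n+k$ — which is precisely the rearrangement supplied by the second half of Lemma \ref{Lem1new} — yields
$$\Im=\sum_{m=0}^{\infty}\frac{(\mu)_{m}\,x^{\alpha m}}{m!}\sum_{n=0}^{m}\frac{m!\,(c-\nu)_{n}(-1)^{n}}{(c)_{n}\,n!\,(m-n)!}.$$

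Using the classical identity $(-1)^{n}m!/(m-n)!=(-m)_{n}$, the inner finite sum is exactly the terminating Gauss series $_{2}F_{1}(-m,c-\nu;c;1)$, which by the Chu--Vandermonde summation equals $(c-(c-\nu))_{m}/(c)_{m}=(\nu)_{m}/(c)_{m}$. Substituting this back collapses $\Im$ to $\sum_{m=0}^{\infty}\frac{(\mu)_{m}(\nu)_{m}}{(c)_{m}\,m!}\,x^{\alpha m}$, which is precisely $_{2}F_{1}(\mu,\nu;c;x^{\alpha})$ by \eqref{eq3.3}, delivering the left-hand side of \eqref{eq5.1}.

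The main obstacle I anticipate is not analytic but combinatorial: keeping the sign and factorial bookkeeping clean through the rearrangement, and in particular recognizing the terminating inner sum as a Chu--Vandermonde evaluation at unit argument. The interchange of the two infinite summations is routine by absolute convergence in the region $|x^{\alpha}|<1$ and $|x^{\alpha}/(1-x^{\alpha})|<1$, exactly parallel to what is done in the proofs of Theorems \ref{Gt2} and \ref{Gt3}.
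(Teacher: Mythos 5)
Your proposal is correct and follows essentially the same route as the paper's own proof: expand the right-hand side via \eqref{eq3.3}, absorb the prefactor into $(1-x^{\alpha})^{-(\mu+n)}$, expand that binomially, merge Pochhammer symbols with $(\mu)_{n}(\mu+n)_{k}=(\mu)_{n+k}$, rearrange the double sum via Lemma \ref{Lem1new}, and evaluate the resulting terminating $_{2}F_{1}(-m,c-\nu;c;1)$ as $(\nu)_{m}/(c)_{m}$. The only differences are cosmetic (swapped roles of the summation indices, and explicitly naming the final evaluation as Chu--Vandermonde, which the paper simply asserts).
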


\begin{proof}
Consider 
\begin{equation*}
\left( 1-x^{\alpha }\right) ^{-\mu }~_{2}F_{1}\left( \mu ,c-\nu ;c;\frac{
-x^{\alpha }}{1-x^{\alpha }}\right)  =\sum\limits_{k=0}^{\infty }\frac{\left( -1\right) ^{k}\left( \mu \right)
_{k}\left( c-\nu \right) _{k}}{\left( c\right) _{k}k!}x^{\alpha k}\left(
1-x^{\alpha }\right) ^{-\left( k+\mu \right) }.
\end{equation*}

In view of the expansion $\left( 1-x^{\alpha }\right) ^{-\mu }=\sum\limits_{n=0}^{\infty}\frac{\left( \mu\right)_{n}}{n!} x^{\alpha n}; \left\vert x^{\alpha }\right\vert <1, $  we may write
\begin{equation*}
\left( 1-x^{\alpha }\right) ^{-\mu }~_{2}F_{1}\left( \mu ,c-\nu ;c;\frac{%
-x^{\alpha }}{1-x^{\alpha }}\right) =\sum\limits_{k=0}^{\infty
}\sum\limits_{n=0}^{\infty }\frac{\left( -1\right) ^{k}\left( \mu \right)
_{k}\left( c-\nu \right) _{k}\left( \mu +k\right) _{n}}{\left( c\right)
_{k}k!n!}x^{\alpha \left( k+n\right) }.
\end{equation*}

Using the identity $\left( \mu +k\right) _{n}=\left( \mu \right) _{k+n},$ it is obvious
\begin{equation}\label{eq5.2}
\left( 1-x^{\alpha }\right) ^{-\mu }~_{2}F_{1}\left( \mu ,c-\nu ;c;\frac{%
-x^{\alpha }}{1-x^{\alpha }}\right) =\sum\limits_{n=0}^{\infty
}\sum\limits_{k=0}^{\infty }\frac{\left( -1\right) ^{k}\left( \mu \right)
_{k+n}\left( c-\nu \right) _{k}}{\left( c\right) _{k}k!n!}x^{\alpha \left(
k+n\right) }
\end{equation}

In virtue of lemma \ref{Lem1new} and the fact $\left( -n\right) _{k}=\frac{\left(
-1\right) ^{k}n!}{\left( n-k\right) !}$ one easily gets
\begin{equation}\label{eq5.3}
\left( 1-x^{\alpha }\right) ^{-\mu }~_{2}F_{1}\left( \mu ,c-\nu ;c;\frac{%
-x^{\alpha }}{1-x^{\alpha }}\right) =\sum\limits_{n=0}^{\infty
}\sum\limits_{k=0}^{n}\frac{\left( -n\right) _{k}\left( c-\nu \right) _{k}}{%
\left( c\right) _{k}k!}\frac{\left( \mu \right) _{n}x^{\alpha n}}{n!}.
\end{equation}

Since $\left( -n\right) _{k}=0$ if $k>n,$ then \eqref{eq5.3} becomes
\begin{equation}\label{eq5.4}
\left( 1-x^{\alpha }\right) ^{-\mu }~_{2}F_{1}\left( \mu ,c-\nu ;c;\frac{%
-x^{\alpha }}{1-x^{\alpha }}\right) =\sum\limits_{n=0}^{\infty
}\sum\limits_{k=0}^{\infty }\frac{\left( -n\right) _{k}\left( c-\nu \right)
_{k}}{\left( c\right) _{k}k!}\frac{\left( \mu \right) _{n}x^{\alpha n}}{n!}.
\end{equation}

Since the inner sum on the right of \eqref{eq5.4} is a terminating hypergeometric series, then
\begin{equation}\label{eq5.5}
\left( 1-x^{\alpha }\right) ^{-\mu }~_{2}F_{1}\left( \mu ,c-\nu ;c;\frac{%
-x^{\alpha }}{1-x^{\alpha }}\right) =\sum\limits_{n=0}^{\infty
}~_{2}F_{1}\left( -n,c-\nu ;c;1\right) \frac{\left( \mu \right)
_{n}x^{\alpha n}}{n!}.
\end{equation}

Due to $_{2}F_{1}\left( -n,c-\nu ;c;1\right) =\frac{\left( \nu
\right) _{n}}{\left( c\right) _{n}}$, the proof is therefore completed.
\end{proof}

\begin{thm}\label{th4.2}
For $\left\vert x^{\alpha}\right\vert <1$, the following identity is true
\begin{equation}\label{eq5.6}
_{2}F_{1}\left( \mu ,\nu ;c;x^{\alpha }\right) =\left( 1-x^{\alpha }\right)
^{c-\mu -\nu }~_{2}F_{1}\left( c-\mu ,c-\nu ;c;x^{\alpha }\right) 
\end{equation}
\end{thm}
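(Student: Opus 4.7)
The plan is to derive Theorem \ref{th4.2} (the Euler-type transformation) as a direct consequence of Theorem \ref{th4.1} (the Pfaff-type transformation), by applying the latter twice together with the manifest symmetry of ${}_{2}F_{1}(\mu,\nu;c;x^{\alpha})$ in its first two parameters. No new series manipulations are needed, since all the convergence and double-sum issues have already been handled in the proof of Theorem \ref{th4.1}.

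First I would apply Theorem \ref{th4.1} to the left-hand side to obtain
\begin{equation*}
{}_{2}F_{1}\!\left(\mu,\nu;c;x^{\alpha}\right)
=\left(1-x^{\alpha}\right)^{-\mu}\,{}_{2}F_{1}\!\left(\mu,c-\nu;c;\tfrac{-x^{\alpha}}{1-x^{\alpha}}\right).
\end{equation*}
Next, using the symmetry ${}_{2}F_{1}(\mu,c-\nu;c;z)={}_{2}F_{1}(c-\nu,\mu;c;z)$, I would apply Theorem \ref{th4.1} a second time, now with parameters $(c-\nu,\mu)$ in place of $(\mu,\nu)$ and with the argument $z=\tfrac{-x^{\alpha}}{1-x^{\alpha}}$. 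This yields
\begin{equation*}
{}_{2}F_{1}\!\left(c-\nu,\mu;c;z\right)=\left(1-z\right)^{-(c-\nu)}\,{}_{2}F_{1}\!\left(c-\nu,c-\mu;c;\tfrac{-z}{1-z}\right).
\end{equation*}

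The key algebraic observation is the involutive nature of the Pfaff substitution: with $z=\tfrac{-x^{\alpha}}{1-x^{\alpha}}$ one has $1-z=\tfrac{1}{1-x^{\alpha}}$ and $\tfrac{-z}{1-z}=x^{\alpha}$. Substituting these back gives
\begin{equation*}
{}_{2}F_{1}\!\left(\mu,c-\nu;c;\tfrac{-x^{\alpha}}{1-x^{\alpha}}\right)=\left(1-x^{\alpha}\right)^{c-\nu}\,{}_{2}F_{1}\!\left(c-\nu,c-\mu;c;x^{\alpha}\right),
\end{equation*}
and combining the two applications the powers of $(1-x^{\alpha})$ merge as $-\mu+(c-\nu)=c-\mu-\nu$, producing exactly \eqref{eq5.6} after one last use of the symmetry in the first two parameters.

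The only real obstacle is bookkeeping: one must carefully verify the identity $\tfrac{-z}{1-z}=x^{\alpha}$ for $z=\tfrac{-x^{\alpha}}{1-x^{\alpha}}$ and ensure the branch/convergence conditions of Theorem \ref{th4.1} are respected at each application, i.e.\ $|x^{\alpha}|<1$ entails $\bigl|\tfrac{-x^{\alpha}}{1-x^{\alpha}}\bigr|<1$ in the relevant regime assumed in Theorem \ref{th4.1}. Under these hypotheses the chain of equalities closes and the proof is complete.
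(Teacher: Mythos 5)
Your proposal is correct and follows essentially the same route as the paper: the paper likewise applies Theorem \ref{th4.1} a second time under the substitution $y^{\alpha}=\tfrac{-x^{\alpha}}{1-x^{\alpha}}$, exploits the involutive identities $1-y^{\alpha}=\tfrac{1}{1-x^{\alpha}}$ and $x^{\alpha}=\tfrac{-y^{\alpha}}{1-y^{\alpha}}$, and combines the two factors of $(1-x^{\alpha})$ to get the exponent $c-\mu-\nu$. The only difference is presentational (you make the parameter symmetry and the bookkeeping explicit), so there is nothing of substance to add.
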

\begin{proof}
By using assertion of theorem \ref{th4.1} and assuming that  $y^{\alpha }=\frac{-x^{\alpha }}{1-x^{\alpha }},$ it follows that
\begin{equation}\label{eq5.7}
_{2}F_{1}\left( \mu ,c-\nu ;c;y^{\alpha }\right) =\left( 1-y^{\alpha
}\right) ^{-\left( c-\nu \right) }~_{2}F_{1}\left( c-\mu ,c-\nu ;c;\frac{%
-y^{\alpha }}{1-y^{\alpha }}\right).
\end{equation}

From the assumption, we have $x^{\alpha }=\frac{%
-y^{\alpha }}{1-y^{\alpha }}$ which gives
\begin{equation}\label{eq5.8}
_{2}F_{1}\left( \mu ,c-\nu ;c;\frac{-x^{\alpha }}{1-x^{\alpha }}\right)
=\left( 1-x^{\alpha }\right) ^{\left( c-\nu \right) }~_{2}F_{1}\left( c-\mu
,c-\nu ;c;x^{\alpha }\right).
\end{equation}

A combinations of \eqref{eq5.8} and \eqref{eq5.1}, the result follows.
\end{proof}
\subsection{Some differential forms}
Now, according to the notation $D^{\alpha n}$, and due
to the fact $D^{\alpha }x^{p}=px^{p-\alpha }$ with $\alpha \in (0,1]$ and 
$\left\vert x^{\alpha }\right\vert <1,$ we state some interesting  conformable fractional
differential formulas for  $_{2}F_{1}\left( \mu ,\nu ;c;x^{\alpha }\right) $ as follows

\begin{align}
D^{\alpha }~_{2}F_{1}\left( \mu ,\nu ;c;x^{\alpha }\right) &=\frac{\alpha \mu
\nu }{c}~_{2}F_{1}\left( \mu +1,\nu +1;c+1;x^{\alpha }\right) \label{eq5.9}\\
D^{n\alpha }~_{2}F_{1}\left( \mu ,\nu ;c;x^{\alpha }\right)& =\frac{\alpha
^{n}\left( \mu \right) _{n}\left( \nu \right) _{n}}{\left( c\right) _{n}}%
~_{2}F_{1}\left( \mu +n,\nu +n;c+n;x^{\alpha }\right) \label{eq5.10}\\
D^{n\alpha }~\left\{ x^{\alpha \left( \mu +n-1\right) }~_{2}F_{1}\left( \mu
,\nu ;c;x^{\alpha }\right) \right\} &=\alpha ^{n}\left( \mu \right)
_{n}~x^{\alpha \left( \mu -1\right) }~_{2}F_{1}\left( \mu +n,\nu
;c;x^{\alpha }\right)\label{eq5.11}\\
D^{n\alpha }~\left\{ x^{\alpha \left( c-1\right) }~_{2}F_{1}\left( \mu ,\nu
;c;x^{\alpha }\right) \right\} &=\alpha ^{n}\left( c-n\right) _{n}~x^{\alpha
\left( c-n-1\right) }~_{2}F_{1}\left( \mu ,\nu ;c-n;x^{\alpha }\right) \label{eq5.12}
\end{align} 
\begin{equation}\label{eq5.13}
\begin{split}
D^{n\alpha }&~\left\{ x^{\alpha \left( c-\mu +n-1\right) }~\left( 1-x^{\alpha
}\right) ^{\mu +\nu -c}~_{2}F_{1}\left( \mu ,\nu ;c;x^{\alpha }\right)
\right\} \\
&=\alpha ^{n}\left( c-\mu \right) _{n}~x^{\alpha \left( c-\mu
-1\right) }\left( 1-x^{\alpha }\right) ^{\mu +\nu -c-n}~_{2}F_{1}\left( \mu -n,\nu ;c;x^{\alpha }\right) 
\end{split}
\end{equation}
\begin{equation}\label{eq5.14}
\begin{split}
D^{n\alpha }&~\left\{ ~\left( 1-x^{\alpha }\right) ^{\mu +\nu
-c}~_{2}F_{1}\left( \mu ,\nu ;c;x^{\alpha }\right) \right\} \\
&=\frac{\alpha
^{n}\left( c-\mu \right) _{n}\left( c-\gamma \right) _{n}}{\left( c\right)
_{n}}~\left( 1-x^{\alpha }\right) ^{\mu +\nu -c-n}~_{2}F_{1}\left( \mu ,\nu ;c+n;x^{\alpha }\right) 
\end{split}
\end{equation}
\begin{equation}\label{eq5.15}
\begin{split}
D^{n\alpha }&~\left\{ x^{\alpha \left( c-1\right) }~\left( 1-x^{\alpha
}\right) ^{\mu +\nu -c}~_{2}F_{1}\left( \mu ,\nu ;c;x^{\alpha }\right)
\right\} \\
&=\alpha ^{n}\left( c-n\right) _{n}~x^{\alpha \left( c-n-1\right)
}\left( 1-x^{\alpha }\right) ^{\mu +\nu -c-n}~_{2}F_{1}\left( \mu -n,\nu -n;c-n;x^{\alpha }\right) 
\end{split}
\end{equation}
\begin{equation}\label{eq5.16}
\begin{split}
D^{n\alpha }&~\left\{ x^{\alpha \left( n+c-1\right) }~\left( 1-x^{\alpha
}\right) ^{n+\mu +\nu -c}~_{2}F_{1}\left( \mu +n,\nu +n;c+n;x^{\alpha
}\right) \right\} \\
&=\alpha ^{n}\left( c\right) _{n}~x^{\alpha \left(
c-1\right) }\left( 1-x^{\alpha }\right) ^{\mu +\nu -c}~_{2}F_{1}\left( \mu ,\nu ;c;x^{\alpha }\right) 
\end{split}
\end{equation}

Such formulas can be proved using the series expansions of $%
_{2}F_{1}\left( \mu ,\nu ;c;x^{\alpha }\right) $ as given in \eqref{eq3.3}. However, we are going to prove the validity of \eqref{eq5.11} and \eqref{eq5.14}, while the other formulas can be proved similarly.
First, note that 
\begin{equation*}
D^{n\alpha }~\left\{ x^{\alpha \left( \mu +n-1\right) }~_{2}F_{1}\left( \mu
,\nu ;c;x^{\alpha }\right) \right\} =\sum\limits_{k=0}^{\infty }\frac{\left(
\mu \right) _{k}\left( \nu \right) _{k}}{\left( c\right) _{k}k!}D^{n\alpha
}\left\{ x^{\alpha \left( \mu +n+k-1\right) }\right\} 
\end{equation*}

The action of the conformable derivative gives
\begin{equation*}
\begin{split}
D^{n\alpha }~\left\{ x^{\alpha \left( \mu +n-1\right) }~_{2}F_{1}\left( \mu
,\nu ;c;x^{\alpha }\right) \right\}  &=\sum\limits_{k=0}^{\infty }\frac{
\left( \mu \right) _{k}\left( \nu \right) _{k}}{\left( c\right) _{k}k!}
\alpha ^{n}\frac{\Gamma \left( \mu +n+k\right) }{\Gamma \left( \mu +k\right) 
}x^{\alpha \left( \mu +k-1\right) } \\
&=\alpha ^{n}\sum\limits_{k=0}^{\infty }\frac{\left( \mu \right)
_{n+k}\left( \nu \right) _{k}}{\left( c\right) _{k}k!}x^{\alpha \left( \mu
+k-1\right) }
\end{split}
\end{equation*}

Knowing that $\left( \mu \right) _{n+k}=\left( \mu \right) _{n}\left( \mu
+n\right) _{k},$ it can be seen 
\begin{equation*}
\begin{split}
D^{n\alpha }~\left\{ x^{\alpha \left( \mu +n-1\right) }~_{2}F_{1}\left( \mu
,\nu ;c;x^{\alpha }\right) \right\}  &=\alpha ^{n}\left( \mu \right)
_{n}x^{\alpha \left( \mu -1\right) }\sum\limits_{k=0}^{\infty }\frac{\left(
\mu +n\right) _{k}\left( \nu \right) _{k}}{\left( c\right) _{k}k!}x^{\alpha
k} \\
&=\alpha ^{n}\left( \mu \right) _{n}~x^{\alpha \left( \mu -1\right)
}~_{2}F_{1}\left( \mu +n,\nu ;c;x^{\alpha }\right) 
\end{split}
\end{equation*}
as required. 

In view of \eqref{eq5.6}, we have 
\begin{equation}\label{eq5.17}
D^{n\alpha }\left\{ \left( 1-x^{\alpha }\right) ^{\mu +\nu
-c}~_{2}F_{1}\left( \mu ,\nu ;c;x^{\alpha }\right) \right\}=D^{n\alpha }~_{2}F_{1}\left( c-\mu ,c-\nu ;c;x^{\alpha }\right) 
\end{equation}

Using \eqref{eq5.10}, we get
\begin{equation}\label{eq5.18}
D^{n\alpha }\left\{ \left( 1-x^{\alpha }\right) ^{\mu +\nu
-c}~_{2}F_{1}\left( \mu ,\nu ;c;x^{\alpha }\right) \right\} =\frac{\alpha
^{n}\left( c-\mu \right) _{n}\left( c-\nu \right) _{n}}{\left( c\right) _{n}}
~_{2}F_{1}\left( c-\mu +n,c-\nu +n;c+n;x^{\alpha }\right) 
\end{equation}

Return to \eqref{eq5.6}, we obtain 
\begin{equation}\label{eq5.19}
\begin{split}
D^{n\alpha }\left\{ ~\left( 1-x^{\alpha }\right) ^{\mu +\nu
-c}~_{2}F_{1}\left( \mu ,\nu ;c;x^{\alpha }\right) \right\}=&\frac{\alpha
^{n}\left( c-\mu \right) _{n}\left( c-\gamma \right) _{n}}{\left( c\right)
_{n}}~\left( 1-x^{\alpha }\right) ^{\mu +\nu -c-n} \\
&\times ~_{2}F_{1}\left( \mu ,\nu ;c+n;x^{\alpha }\right) 
\end{split}
\end{equation}
\begin{rem}
In case of $\mu =-n$ in \eqref{eq5.16}, we obtain 
\begin{equation}\label{eq5.20}
D^{n\alpha }\left\{ x^{\alpha \left( n+c-1\right) }~\left( 1-x^{\alpha
}\right) ^{\nu -c}~\right\} =\alpha ^{n}\left( c\right) _{n}~x^{\alpha
\left( c-1\right) }\left( 1-x^{\alpha }\right) ^{\nu -c-n}~_{2}F_{1}\left(
-n,\nu ;c;x^{\alpha }\right) 
\end{equation}
\end{rem}
\section{Differential operator and contiguous relations}
Following \cite{rainville1969special}, define the conformable fractional operator $\theta ^{\alpha }$ in the form
\begin{equation}\label{eq6.1}
\theta ^{\alpha }=\frac{1}{
\alpha }x^{\alpha }D^{\alpha }.
\end{equation}

This operator has the particularly pleasant
property that $\theta ^{\alpha }x^{n\alpha }=nx^{n\alpha },$ which makes it
handy to be used on power series. 
In this section, relying on definition \ref{DCont}, we establish several results concerning contiguous relations for the CFGHF. To achieve that, we have to prove the following lemma.
 
\begin{lem}\label{Lem6.1}
Let $\alpha \in (0,1],$ then the CFGHF $_{2}F_{1}\left( \mu ,\nu ;c;x^{\alpha }\right) $ satisfies the
following 
\begin{align}
\left( \theta ^{\alpha }+\mu \right) _{2}F_{1}\left( \mu ,\nu ;c;x^{\alpha
}\right) &=\mu _{2}F_{1}\left( \mu +1,\nu ;c;x^{\alpha }\right)\label{eq6.2}\\
\left( \theta ^{\alpha }+\nu \right) _{2}F_{1}\left( \mu ,\nu ;c;x^{\alpha
}\right) &=\nu _{2}F_{1}\left( \mu ,\nu +1;c;x^{\alpha }\right)\label{eq6.3}\\
\left( \theta ^{\alpha }+c-1\right) _{2}F_{1}\left( \mu ,\nu ;c;x^{\alpha
}\right) &=\left( c-1\right) _{2}F_{1}\left( \mu ,\nu ;c-1;x^{\alpha }\right)\label{eq6.4}
\end{align}
\end{lem}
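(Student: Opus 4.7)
My plan is to prove all three identities by direct termwise action on the series expansion \eqref{eq3.3}. The essential operator fact I will use is $\theta^\alpha x^{n\alpha} = n\, x^{n\alpha}$, noted immediately before the lemma; this follows by linearity of $D^\alpha$ together with the rule $D^\alpha x^p = p x^{p-\alpha}$. Since the series \eqref{eq3.3} converges uniformly on compact subsets of the disc $|x^\alpha|<1$, pushing $\theta^\alpha$ under the summation sign is legitimate.

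For \eqref{eq6.2}, I will apply $\theta^\alpha + \mu$ termwise to obtain
$$(\theta^\alpha + \mu)\,{}_2F_1(\mu,\nu;c;x^\alpha) = \sum_{n=0}^\infty \frac{(n+\mu)(\mu)_n (\nu)_n}{(c)_n\, n!}\, x^{\alpha n}.$$
The key Pochhammer manipulation is $(n+\mu)(\mu)_n = (\mu)_{n+1} = \mu\,(\mu+1)_n$, which immediately collapses the right-hand side to $\mu\,{}_2F_1(\mu+1,\nu;c;x^\alpha)$. Identity \eqref{eq6.3} then follows either by the $\mu \leftrightarrow \nu$ symmetry of the CFGHF or by repeating the argument verbatim on the factor $(\nu)_n$, using $(n+\nu)(\nu)_n = \nu\,(\nu+1)_n$.

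For \eqref{eq6.4}, the same termwise procedure yields a sum whose generic coefficient carries the factor $(c+n-1)/(c)_n$. The decisive identity here is $(c-1)_n\,(c+n-1) = (c-1)\,(c)_n$, verified directly from the telescoping product definition of the Pochhammer symbol; equivalently, this says $(c+n-1)/(c)_n = (c-1)/(c-1)_n$. Substituting turns the sum into $(c-1)\,{}_2F_1(\mu,\nu;c-1;x^\alpha)$, which is \eqref{eq6.4}.

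No step presents a genuine obstacle; the entire lemma reduces to the two Pochhammer identities above once $\theta^\alpha$ is slipped inside the series. The only mild care required is the justification of termwise application of $\theta^\alpha$, which is standard for fractional Maclaurin power series on their disc of convergence.
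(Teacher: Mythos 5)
Your proposal is correct and follows essentially the same route as the paper: termwise application of $\theta^{\alpha}$ to the series \eqref{eq3.3} using $\theta^{\alpha}x^{n\alpha}=nx^{n\alpha}$, the identity $(n+\mu)(\mu)_n=\mu(\mu+1)_n$ for \eqref{eq6.2} and \eqref{eq6.3}, and the equivalent rewriting $(c+n-1)/(c)_n=(c-1)/(c-1)_n$ for \eqref{eq6.4}. The only (harmless) addition is your explicit remark on justifying the interchange of $\theta^{\alpha}$ with the summation, which the paper leaves implicit.
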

\begin{proof}
Using \eqref{eq3.3} and \eqref{eq6.1} it follows that
\begin{equation*}
\begin{split}
\left( \theta ^{\alpha }+\mu \right) _{2}F_{1}\left( \mu ,\nu ;c;x^{\alpha
}\right) &=\sum\limits_{n=0}^{\infty }\frac{\left( \mu \right) _{n}\left(
\nu \right) _{n}}{\left( c\right) _{n}n!}\left( \theta ^{\alpha }+\mu
\right) x^{\alpha n} \\
&=\sum\limits_{n=0}^{\infty }\frac{\left( \mu \right) _{n}\left( \nu
\right) _{n}}{\left( c\right) _{n}n!}\left( n+\mu \right) x^{\alpha
n}=\sum\limits_{n=0}^{\infty }\frac{\left( \mu \right) _{n+1}\left( \nu
\right) _{n}}{\left( c\right) _{n}n!}x^{\alpha n} \\
&=\sum\limits_{n=0}^{\infty }\frac{\mu \left( \mu +1\right) _{n}\left( \nu
\right) _{n}}{\left( c\right) _{n}n!}x^{\alpha n}=\mu _{2}F_{1}\left( \mu +1,\nu
;c;x^{\alpha }\right).
\end{split}
\end{equation*}

Similarly, we have
\begin{equation*}
\left( \theta ^{\alpha }+\nu \right) _{2}F_{1}\left( \mu ,\nu ;c;x^{\alpha
}\right) =\nu _{2}F_{1}\left( \mu ,\nu +1;c;x^{\alpha }\right).
\end{equation*}

Along the same way
\begin{equation*}
\left( \theta ^{\alpha }+c-1\right) _{2}F_{1}\left( \mu ,\nu ;c;x^{\alpha
}\right) =\sum\limits_{n=0}^{\infty }\frac{\left( \mu \right) _{n}\left( \nu
\right) _{n}}{\left( c\right) _{n}n!}\left( \theta ^{\alpha }+c-1\right)
x^{\alpha n}=\sum\limits_{n=0}^{\infty }\frac{\left( \mu \right) _{n}\left(
\nu \right) _{n}}{\left( c\right) _{n}n!}\left( n+c-1\right) x^{\alpha n}.
\end{equation*}

Therefore,
\begin{equation*}
\begin{split}
\left( \theta ^{\alpha }+c-1\right) _{2}F_{1}\left( \mu ,\nu ;c;x^{\alpha
}\right) &=\sum\limits_{n=0}^{\infty }\frac{\left( \mu \right) _{n}\left(
\nu \right) _{n}}{\left( c\right) _{n-1}n!}x^{\alpha n} \\
&=\sum\limits_{n=0}^{\infty }\frac{\left( c-1\right) \left( \mu \right)
_{n}\left( \nu \right) _{n}}{\left( c-1\right) _{n}n!}x^{\alpha n}=\left(
c-1\right) _{2}F_{1}\left( \mu ,\nu ;c-1;x^{\alpha }\right).
\end{split}
\end{equation*}
\end{proof}

The following result is a consequence of Lemma \ref{Lem6.1}.

\begin{thm}\label{Thm6.1}
Let $\alpha \in (0,1],$ then the CFGHF $_{2}F_{1}\left( \mu ,\nu ;c;x^{\alpha }\right) $ satisfies the following contiguous relations 
\begin{equation}\label{eq6.5}
\left( \mu -\nu \right) _{2}F_{1}\left( \mu ,\nu ;c;x^{\alpha }\right) =\mu
_{2}F_{1}\left( \mu +1,\nu ;c;x^{\alpha }\right) -\nu _{2}F_{1}\left( \mu
,\nu +1;c;x^{\alpha }\right)
\end{equation}
and
\begin{equation}\label{eq6.6}
\left( \mu +c-1\right) _{2}F_{1}\left( \mu ,\nu ;c;x^{\alpha }\right) =\mu
_{2}F_{1}\left( \mu +1,\nu ;c;x^{\alpha }\right) -\left( c-1\right)
_{2}F_{1}\left( \mu ,\nu ;c-1;x^{\alpha }\right)
\end{equation}
\end{thm}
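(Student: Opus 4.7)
The plan is to derive both contiguous relations as purely algebraic consequences of the three operator identities in Lemma~\ref{Lem6.1}. Each of \eqref{eq6.2}--\eqref{eq6.4} asserts that applying a first-order operator of the form $\theta^{\alpha}+\beta$ (with $\beta\in\{\mu,\nu,c-1\}$) to ${}_2F_1(\mu,\nu;c;x^{\alpha})$ yields a constant multiple of a contiguous hypergeometric function in which exactly one of the parameters is shifted. Since all three left-hand sides contain the same differential operator $\theta^{\alpha}$ acting on the same function, subtracting any pair of these identities immediately cancels the $\theta^{\alpha}$ part and produces an algebraic relation among three hypergeometric functions.

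To obtain \eqref{eq6.5}, I would subtract \eqref{eq6.3} from \eqref{eq6.2}. On the left the $\theta^{\alpha}$ contributions cancel and the coefficient of ${}_2F_1(\mu,\nu;c;x^{\alpha})$ collapses to $\mu-\nu$, while the right-hand side reads off directly as $\mu\,{}_2F_1(\mu+1,\nu;c;x^{\alpha})-\nu\,{}_2F_1(\mu,\nu+1;c;x^{\alpha})$, which is exactly \eqref{eq6.5}. To obtain \eqref{eq6.6}, I would apply the same cancellation to the pair \eqref{eq6.2} and \eqref{eq6.4}: the difference of the operators $\theta^{\alpha}+\mu$ and $\theta^{\alpha}+c-1$ is a scalar, so the $\theta^{\alpha}$ drops out and one is left with a linear identity whose right-hand side combines $\mu\,{}_2F_1(\mu+1,\nu;c;x^{\alpha})$ and $-(c-1)\,{}_2F_1(\mu,\nu;c-1;x^{\alpha})$, producing the stated contiguous relation.

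There is essentially no obstacle here; the theorem is a bookkeeping corollary of Lemma~\ref{Lem6.1}, and beyond choosing the correct pairs of identities to subtract, no further analytic input or series manipulation is required. The only point that demands a small check is that the scalar coefficients line up correctly on the left-hand side after the subtraction, which is immediate since $\theta^{\alpha}$ acts identically on both copies of ${}_2F_1(\mu,\nu;c;x^{\alpha})$.
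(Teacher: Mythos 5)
Your argument is exactly the paper's: both relations follow by subtracting pairs of the operator identities in Lemma~\ref{Lem6.1}, so the $\theta^{\alpha}$ contributions cancel and only scalar coefficients remain. One caveat: subtracting \eqref{eq6.4} from \eqref{eq6.2} leaves $\bigl(\mu-(c-1)\bigr)\,{}_2F_1(\mu,\nu;c;x^{\alpha})$ on the left, i.e.\ the coefficient $(\mu-c+1)$, not the $(\mu+c-1)$ printed in \eqref{eq6.6}; evaluating at $x=0$, where every ${}_2F_1$ equals $1$, confirms that $(\mu-c+1)$ is correct and that \eqref{eq6.6} as stated carries a sign typo. Your derivation is therefore sound, but you should not describe its output as literally ``the stated relation'' without flagging this discrepancy.
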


\begin{proof}
Using \eqref{eq6.2} and \eqref{eq6.3} immediately give \eqref{eq6.5} and similarly \eqref{eq6.2} and \eqref{eq6.4} assert \eqref{eq6.6}.
\end{proof}

\begin{thm}\label{Thm6.2}
Let $\alpha \in (0,1],$ then the CFGHF, $_{2}F_{1}\left( \mu ,\nu ;c;x^{\alpha }\right) $ satisfies the following contiguous relation
\begin{equation}\label{eq6.7}
\begin{aligned}
\left[ \mu +\left( \nu -c\right) x^{\alpha }\right] ~_{2}F_{1}\left( \mu
,\nu ;c;x^{\alpha }\right) =&\mu \left( 1-x^{\alpha }\right)
~_{2}F_{1}\left( \mu +1,\nu ;c;x^{\alpha }\right) \\
\\
&-c^{-1}\left( c-\mu \right) \left( c-\nu \right) x^{\alpha
}~_{2}F_{1}\left( \mu ,\nu ;c+1;x^{\alpha }\right)
\end{aligned}
\end{equation}
\end{thm}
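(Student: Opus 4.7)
The plan is to establish the contiguous relation \eqref{eq6.7} by matching coefficients of $x^{\alpha n}$ on both sides after substituting the series \eqref{eq3.3} for each $_2F_1$ appearing. This is the cleanest approach because the three Pochhammer ratios involved differ in only one parameter at a time, so the comparison reduces to a small polynomial identity. First I would expand the left-hand side as
\[
\mu\sum_{n=0}^{\infty}\frac{(\mu)_n(\nu)_n}{(c)_n n!}x^{\alpha n}+(\nu-c)\sum_{n=0}^{\infty}\frac{(\mu)_n(\nu)_n}{(c)_n n!}x^{\alpha(n+1)},
\]
and expand the right-hand side as the analogous sum of three series, one from $\mu \,_2F_1(\mu+1,\nu;c;x^\alpha)$, one from $-\mu x^\alpha\,_2F_1(\mu+1,\nu;c;x^\alpha)$, and one from $-c^{-1}(c-\mu)(c-\nu)x^\alpha\,_2F_1(\mu,\nu;c+1;x^\alpha)$.

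The $n=0$ coefficient is trivial: every term carrying a factor $x^\alpha$ vanishes, so both sides reduce to $\mu$. For $n\geq 1$, I would normalise by factoring out the common Pochhammer expression $K_n:=\dfrac{(\mu)_{n-1}(\nu)_{n-1}}{(c)_{n-1}(n-1)!}$ from all five remaining contributions. This step uses three elementary manipulations: $(\mu+1)_n=(\mu)_n(\mu+n)/\mu$, the rewrite $\dfrac{(\mu)_n(\nu)_n}{(c)_n n!}=\dfrac{(\mu+n-1)(\nu+n-1)}{(c+n-1)n}K_n$, and the ratio $\dfrac{(c)_{n-1}}{(c+1)_{n-1}}=\dfrac{c}{c+n-1}$, which notably cancels the $c^{-1}$ prefactor in the third term of the right-hand side.

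After the cancellation, the required identity reduces to the purely algebraic statement
\[
(c+n-1)(\mu+\nu+n-1-c)+(c-\mu)(c-\nu)=(\mu+n-1)(\nu+n-1).
\]
This is verified immediately by setting $A=\mu+n-1$, $B=\nu+n-1$, $C=c+n-1$: then $C-A=c-\mu$ and $C-B=c-\nu$, so the claim becomes $C(A+B-C)+(C-A)(C-B)=AB$, which expands trivially.

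The main obstacle is purely the bookkeeping during the Pochhammer reduction, especially ensuring that the shift in the denominator $(c)_n\mapsto (c+1)_n$ in the last term is tracked correctly so that everything indeed reduces to a single multiple of $K_n$. An alternative operator-theoretic proof is possible: one could combine the Euler-type factorisation $\theta^\alpha(\theta^\alpha+c-1)F=x^\alpha(\theta^\alpha+\mu)(\theta^\alpha+\nu)F$ implicit in equation \eqref{eq3.1} with Lemma \ref{Lem6.1} and the shifted identity $(\theta^\alpha+c)\,_2F_1(\mu,\nu;c+1;x^\alpha)=c\,_2F_1(\mu,\nu;c;x^\alpha)$, using $\theta^\alpha(x^{\alpha a}g)=x^{\alpha a}(\theta^\alpha+a)g$ to move the $x^\alpha$ past the operators. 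But this route does not meaningfully shorten the computation, so I would present the series-comparison proof.
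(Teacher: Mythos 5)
Your proof is correct, but it takes a more direct route than the paper. You verify \eqref{eq6.7} by expanding every $_2F_1$ via \eqref{eq3.3} and matching the coefficient of $x^{\alpha n}$, reducing everything to the polynomial identity $C(A+B-C)+(C-A)(C-B)=AB$ with $A=\mu+n-1$, $B=\nu+n-1$, $C=c+n-1$; your Pochhammer reductions (in particular $(c)_{n-1}/(c+1)_{n-1}=c/(c+n-1)$ and $\mu(\mu+1)_n=(\mu)_n(\mu+n)$) all check out, as does the $n=0$ base case. The paper instead works through the operator $\theta^{\alpha}=\tfrac{1}{\alpha}x^{\alpha}D^{\alpha}$: it first computes $\theta^{\alpha}\,{}_2F_1$, applies the partial-fraction decomposition $\tfrac{(\mu+n)(\nu+n)}{c+n}=n+(\mu+\nu-c)+\tfrac{(c-\mu)(c-\nu)}{c+n}$ to obtain the intermediate relation \eqref{eq6.9} for $(1-x^{\alpha})\theta^{\alpha}\,{}_2F_1$, and then eliminates $\theta^{\alpha}\,{}_2F_1$ using \eqref{eq6.2} of Lemma \ref{Lem6.1}. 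The two arguments rest on exactly the same quadratic identity (your $C(A+B-C)+(C-A)(C-B)=AB$ is the partial-fraction decomposition cleared of denominators), so neither is deeper than the other; the paper's version buys a reusable intermediate formula \eqref{eq6.9} and leverages the already-proved Lemma \ref{Lem6.1}, which fits the way Theorem \ref{Thm6.3} and the later eliminations \eqref{eq6.15}--\eqref{eq6.20} are organized, while yours is self-contained and avoids introducing the operator at the cost of heavier coefficient bookkeeping. The operator-theoretic alternative you sketch at the end is essentially what the paper does.
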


\begin{proof}
Consider
\begin{equation*}
\theta ^{\alpha }{}_{2}F_{1}\left( \mu ,\nu ;c;x^{\alpha }\right) =\sum\limits_{n=1}^{\infty }
\frac{\left( \mu \right) _{n}\left( \nu \right) _{n}}{\left( c\right) _{n}n!}
nx^{\alpha n}
\end{equation*}

A shift of index gives
\begin{equation}\label{eq6.8}
\theta ^{\alpha }{}_{2}F_{1}\left( \mu ,\nu ;c;x^{\alpha }\right)
=\sum\limits_{n=0}^{\infty }\frac{\left( \mu \right) _{n+1}\left( \nu
\right) _{n+1}}{\left( c\right) _{n+1}n!}x^{\alpha \left( n+1\right)
}=x^{\alpha }\sum\limits_{n=0}^{\infty }\frac{\left( \mu +n\right) \left(
\nu +n\right) }{\left( c+n\right) }\frac{\left( \mu \right) _{n}\left( \nu
\right) _{n}}{\left( c\right) _{n}n!}x^{\alpha n}
\end{equation}

Since, $$\frac{\left( \mu +n\right) \left( \nu +n\right) }{\left(
c+n\right) }=n+\left( \mu +\nu -c\right) +\frac{\left( c-\mu \right) \left(
c-\nu \right) }{c+n},$$
then equation \eqref{eq6.8} yields
\begin{equation*}
\begin{split}
\theta ^{\alpha }{}_{2}F_{1}\left( \mu ,\nu ;c;x^{\alpha }\right)
=&x^{\alpha }\sum\limits_{n=0}^{\infty }\frac{\left( \mu \right) _{n}\left(
\nu \right) _{n}}{\left( c\right) _{n}n!}nx^{\alpha n}+\left( \mu +\nu
-c\right) x^{\alpha }\sum\limits_{n=0}^{\infty }\frac{\left( \mu \right)
_{n}\left( \nu \right) _{n}}{\left( c\right) _{n}n!}x^{\alpha n} \\
&+x^{\alpha }\frac{\left( c-\mu \right) \left( c-\nu \right) }{c}
\sum\limits_{n=0}^{\infty }\frac{c}{c+n}\frac{\left( \mu \right) _{n}\left(
\nu \right) _{n}}{\left( c\right) _{n}n!}x^{\alpha n} \\
=&x^{\alpha }\theta ^{\alpha }{}_{2}F_{1}\left( \mu ,\nu ;c;x^{\alpha
}\right) +\left( \mu +\nu -c\right) x^{\alpha }~_{2}F_{1}\left( \mu ,\nu
;c;x^{\alpha }\right) \\
&+\frac{\left( c-\mu \right) \left( c-\nu \right) }{c}x^{\alpha
}\sum\limits_{n=0}^{\infty }\frac{\left( \mu \right) _{n}\left( \nu \right)
_{n}}{\left( c+1\right) _{n}n!}x^{\alpha n}
\end{split}
\end{equation*}

Hence we can write 
\begin{equation}\label{eq6.9}
\begin{aligned}
\left( 1-x^{\alpha }\right) \theta ^{\alpha }{}_{2}F_{1}\left( \mu ,\nu ;c;x^{\alpha }\right)=&\left( \mu +\nu -c\right) x^{\alpha }~_{2}F_{1}\left( \mu ,\nu
;c;x^{\alpha }\right) \\
&+c^{-1}\left( c-\mu \right) \left( c-\nu \right) x^{\alpha
}~_{2}F_{1}\left( \mu ,\nu ;c+1;x^{\alpha }\right)
\end{aligned}
\end{equation}

From \eqref{eq6.2} we obtain
\begin{equation*}
\begin{split}
\left( 1-x^{\alpha }\right) \theta ^{\alpha }{}_{2}F_{1}\left( \mu ,\nu
;c;x^{\alpha }\right) =&-\mu \left( 1-x^{\alpha }\right) ~_{2}F_{1}\left(
\mu ,\nu ;c;x^{\alpha }\right) \\
&+\mu \left( 1-x^{\alpha }\right) ~_{2}F_{1}\left( \mu +1,\nu ;c;x^{\alpha
}\right)
\end{split}
\end{equation*}
which implies together with \eqref{eq6.9} the required relation.
\end{proof}

\begin{thm}\label{Thm6.3}
For $\alpha \in (0,1],$ then the CFGHF, $_{2}F_{1}\left( \mu ,\nu ;c;x^{\alpha }\right) $ satisfies the following contiguous relation
\begin{equation}\label{eq6.10}
\left( 1-x^{\alpha }\right) ~_{2}F_{1}\left( \mu ,\nu ;c;x^{\alpha }\right)
=_{2}F_{1}\left( \mu -1,\nu ;c;x^{\alpha }\right) -c^{-1}\left( c-\nu
\right) x^{\alpha }~~_{2}F_{1}\left( \mu ,\nu ;c+1;x^{\alpha }\right). 
\end{equation}
\begin{equation}\label{eq6.11}
\left( 1-x^{\alpha }\right) ~_{2}F_{1}\left( \mu ,\nu ;c;x^{\alpha }\right)
=~_{2}F_{1}\left( \mu ,\nu -1;c;x^{\alpha }\right) -c^{-1}\left( c-\mu
\right) x^{\alpha }~~_{2}F_{1}\left( \mu ,\nu ;c+1;x^{\alpha }\right) 
\end{equation}
\end{thm}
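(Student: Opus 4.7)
The proof plan is to establish (6.10) by expanding both sides as fractional power series via \eqref{eq3.3} and matching coefficients of $x^{\alpha n}$, and then to deduce (6.11) from (6.10) by invoking the manifest symmetry of the CFGHF under $\mu \leftrightarrow \nu$. Setting $F = {}_{2}F_{1}(\mu,\nu;c;x^{\alpha})$, I would first rearrange (6.10) into the equivalent form
$$F - {}_{2}F_{1}(\mu-1,\nu;c;x^{\alpha}) = x^{\alpha} F - c^{-1}(c-\nu)\, x^{\alpha}\, {}_{2}F_{1}(\mu,\nu;c+1;x^{\alpha}),$$
so that both sides naturally begin with the power $x^{\alpha}$ and the Pochhammer shifts on the left and right can be compared on an equal footing.

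Next, I would compute the coefficient of $x^{\alpha n}$ (for $n\ge 1$) on each side. On the left, the identity $(\mu)_{n} - (\mu-1)_{n} = n(\mu)_{n-1}$ collapses the series to $\sum_{n\ge 1}\frac{(\mu)_{n-1}(\nu)_{n}}{(c)_{n}(n-1)!}\, x^{\alpha n}$. On the right, I would re-index $x^{\alpha}\, {}_{2}F_{1}(\mu,\nu;c+1;x^{\alpha}) = \sum_{n\ge 1}\frac{(\mu)_{n-1}(\nu)_{n-1}}{(c+1)_{n-1}(n-1)!}\, x^{\alpha n}$, apply $(c+1)_{n-1} = (c)_{n}/c$, and factor out the common quantity $\frac{(\mu)_{n-1}(\nu)_{n-1}}{(c)_{n-1}(n-1)!}$ to produce the bracket $\bigl[\,1 - \tfrac{c-\nu}{c+n-1}\,\bigr] = \tfrac{\nu+n-1}{c+n-1}$. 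Using $(\nu)_{n-1}(\nu+n-1) = (\nu)_{n}$ and $(c)_{n-1}(c+n-1) = (c)_{n}$, the right-hand sum reduces to the same expression $\frac{(\mu)_{n-1}(\nu)_{n}}{(c)_{n}(n-1)!}$, thereby establishing (6.10).

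Finally, (6.11) follows from (6.10) by swapping $\mu$ and $\nu$: the series \eqref{eq3.3} is visibly symmetric in these two parameters, so $F$ and ${}_{2}F_{1}(\mu,\nu;c+1;x^{\alpha})$ are unchanged, while ${}_{2}F_{1}(\mu-1,\nu;c;x^{\alpha})$ becomes ${}_{2}F_{1}(\mu,\nu-1;c;x^{\alpha})$ and the factor $(c-\nu)$ becomes $(c-\mu)$, yielding exactly \eqref{eq6.11}. The only genuine difficulty in this program is the clerical one of keeping the Pochhammer and factorial shifts correctly aligned across the two sides so that the key ratio $(\nu+n-1)/(c+n-1)$ emerges cleanly; once this bookkeeping is arranged, the identity reduces to elementary Pochhammer manipulations of the type used in Lemma \ref{Lem1new} and throughout Section 4, with no need to invoke the differential operator $\theta^{\alpha}$.
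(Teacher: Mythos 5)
Your proposal is correct, and it reaches \eqref{eq6.10} by a genuinely different route than the paper. The paper works through the operator $\theta^{\alpha}=\tfrac{1}{\alpha}x^{\alpha}D^{\alpha}$: it applies $\theta^{\alpha}$ to ${}_{2}F_{1}(\mu-1,\nu;c;x^{\alpha})$, shifts the index, splits $\tfrac{\nu+n}{c+n}=1-\tfrac{c-\nu}{c+n}$ to recognize the two series $F$ and ${}_{2}F_{1}(\mu,\nu;c+1;x^{\alpha})$ (giving \eqref{eq6.13}), then eliminates $\theta^{\alpha}\,{}_{2}F_{1}(\mu-1,\nu;c;x^{\alpha})$ using \eqref{eq6.2} with $\mu$ replaced by $\mu-1$ (giving \eqref{eq6.14}), and cancels the common factor $(\mu-1)$. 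You instead verify the identity directly by matching coefficients of $x^{\alpha n}$, with the Pochhammer identity $(\mu)_{n}-(\mu-1)_{n}=n(\mu)_{n-1}$ doing the work that $\theta^{\alpha}$ and Lemma \ref{Lem6.1} do in the paper; the same partial-fraction split $1-\tfrac{c-\nu}{c+n-1}=\tfrac{\nu+n-1}{c+n-1}$ appears in both arguments, just shifted by one in the index. I checked your coefficient computation on both sides (using $(c+1)_{n-1}=(c)_{n}/c$ and $(c)_{n}=(c)_{n-1}(c+n-1)$) and it closes correctly. Your route is slightly more self-contained: it needs no differential operator and no implicit division by $\mu-1$, so it covers the case $\mu=1$ without a continuity appeal. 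Both proofs obtain \eqref{eq6.11} identically, by the visible symmetry of \eqref{eq3.3} under $\mu\leftrightarrow\nu$. What the paper's approach buys in exchange is economy within its own framework: it reuses the already-established machinery of Section 6 rather than restarting from the series definition.
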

\begin{proof}
By operating  $\theta ^{\alpha }{}$  $
_{2}F_{1}\left( \mu -1,\nu ;c;x^{\alpha }\right) ,$  we  obtain 
\begin{equation*}
\theta ^{\alpha }~_{2}F_{1}\left( \mu -1,\nu ;c;x^{\alpha }\right) =\theta ^{\alpha }~\sum\limits_{n=0}^{\infty }\frac{\left( \mu -1\right) _{n}\left(
\nu \right) _{n}}{\left( c\right) _{n}n!}x^{\alpha
n}=\sum\limits_{n=1}^{\infty }\frac{\left( \mu -1\right) _{n}\left(
\nu \right) _{n}}{\left( c\right) _{n}n!}nx^{\alpha n}.
\end{equation*}

A shift of index yields 
\begin{equation}\label{eq6.12}
\begin{aligned}
\theta ^{\alpha }~_{2}F_{1}\left( \mu -1,\nu ;c;x^{\alpha }\right) 
&=\sum\limits_{n=0}^{\infty }\frac{\left( \mu -1\right) _{n+1}\left( \nu
\right) _{n+1}}{\left( c\right) _{n+1}n!}x^{\alpha \left( n+1\right) } \\
&=\left( \mu -1\right) x^{\alpha }\sum\limits_{n=0}^{\infty }\frac{\left(
\nu +n\right) }{\left( c+n\right) }\frac{\left( \mu \right) _{n}\left( \nu
\right) _{n}}{\left( c\right) _{n}n!}x^{\alpha n}
\end{aligned}
\end{equation}

But $\frac{\left( \nu +n\right) }{\left( c+n\right) }=1-\frac{c-\nu }{c+n},$
thus \eqref{eq6.12} becomes 
\begin{equation*}
\begin{split}
\theta ^{\alpha }~_{2}F_{1}\left( \mu -1,\nu ;c;x^{\alpha }\right) 
&=\left( \mu -1\right) x^{\alpha }\left[ \sum\limits_{n=0}^{\infty }\frac{%
\left( \mu \right) _{n}\left( \nu \right) _{n}}{\left( c\right) _{n}n!}%
x^{\alpha n}+\frac{c-\nu }{c}\sum\limits_{n=0}^{\infty }\frac{c}{c+n}\frac{%
\left( \mu \right) _{n}\left( \nu \right) _{n}}{\left( c\right) _{n}n!}%
x^{\alpha n}\right]  \\
&=\left( \mu -1\right) x^{\alpha }\left[ _{2}F_{1}\left( \mu ,\nu
;c;x^{\alpha }\right) -\frac{c-\nu }{c}~_{2}F_{1}\left( \mu ,\nu
;c+1;x^{\alpha }\right) \right] 
\end{split}
\end{equation*}
which yield 
\begin{equation}\label{eq6.13}
\begin{aligned}
\theta ^{\alpha }~_{2}F_{1}\left( \mu -1,\nu ;c;x^{\alpha }\right) 
&=\left( \mu -1\right) x^{\alpha }~_{2}F_{1}\left( \mu ,\nu ;c;x^{\alpha
}\right)  \\
&-c^{-1}\left( c-\nu \right) \left( \mu -1\right) x^{\alpha
}~~_{2}F_{1}\left( \mu ,\nu ;c+1;x^{\alpha }\right).
\end{aligned} 
\end{equation}

Now,  replacing $\mu $ by $\left( \mu -1\right) $ in \eqref{eq6.2} implies that 
\begin{equation}\label{eq6.14}
\theta ^{\alpha }~_{2}F_{1}\left( \mu -1,\nu ;c;x^{\alpha }\right) =-\left(
\mu -1\right) ~_{2}F_{1}\left( \mu -1,\nu ;c;x^{\alpha }\right) +\left( \mu
-1\right) ~_{2}F_{1}\left( \mu ,\nu ;c;x^{\alpha }\right). 
\end{equation}

From \eqref{eq6.13} and \eqref{eq6.14}, the relation  \eqref{eq6.10} is verified. 
Similarly, since $\mu $ and $\nu $ can be interchanged without affecting the
hypergeometric series,  \eqref{eq6.11} yields. 
\end{proof}

Observe that from the contiguous relations we just derived in Theorems \ref{Thm6.1}, \ref{Thm6.2}, and  \ref{Thm6.3}, we can obtain further relations by performing some suitable eliminations as follows.

From \eqref{eq6.7} and \eqref{eq6.10}, we get  
\begin{equation}\label{eq6.15}
\begin{aligned}
\left[ 2\mu -c+\left( \nu -\mu \right) x^{\alpha }\right] ~_{2}F_{1}\left(
\mu ,\nu ;c;x^{\alpha }\right)  =&\mu \left( 1-x^{\alpha }\right)
~_{2}F_{1}\left( \mu +1,\nu ;c;x^{\alpha }\right)  \\
&-\left( c-\mu \right) ~_{2}F_{1}\left( \mu -1,\nu ;c;x^{\alpha }\right).
\end{aligned}
\end{equation}

A combination of \eqref{eq6.7} and \eqref{eq6.11} gives 
\begin{equation}\label{eq6.16}
\begin{aligned}
\left[ \mu +\gamma -c\right] ~_{2}F_{1}\left( \mu ,\nu ;c;x^{\alpha }\right)
=&\mu \left( 1-x^{\alpha }\right) ~_{2}F_{1}\left( \mu +1,\nu ;c;x^{\alpha
}\right)  \\
&-\left( c-\gamma \right) ~_{2}F_{1}\left( \mu ,\nu -1;c;x^{\alpha }\right).
\end{aligned}
\end{equation}

Inserting \eqref{eq6.5} in \eqref{eq6.15}, satisfies
\begin{equation}\label{eq6.17}
\begin{aligned}
\left[ c-\mu -\nu \right] _{2}F_{1}\left( \mu ,\nu ;c;x^{\alpha }\right) 
=&\left( c-\mu \right) ~_{2}F_{1}\left( \mu -1,\nu ;c;x^{\alpha }\right)  \\
&-\nu \left( 1-x^{\alpha }\right) _{2}F_{1}\left( \mu ,\nu +1;c;x^{\alpha
}\right).
\end{aligned}
\end{equation}

Also, from \eqref{eq6.15} and \eqref{eq6.16}, we get 
\begin{equation}\label{eq6.18}
\left( \nu -\mu \right) \left( 1-x^{\alpha }\right) _{2}F_{1}\left( \mu ,\nu
;c;x^{\alpha }\right) =\left( c-\mu \right) ~_{2}F_{1}\left( \mu -1,\nu
;c;x^{\alpha }\right) -\left( c-\nu \right) ~_{2}F_{1}\left( \mu ,\nu
-1;c;x^{\alpha }\right).
\end{equation}

Use \eqref{eq6.6} and \eqref{eq6.16} to obtain  
\begin{equation}\label{eq6.19}
\begin{aligned}
\left[ 1-\mu +\left( c-\nu -1\right) x^{\alpha }\right] ~_{2}F_{1}\left( \mu
,\nu ;c;x^{\alpha }\right)  =&\left( c-\mu \right) _{2}F_{1}\left( \mu
-1,\nu ;c;x^{\alpha }\right)  \\
&-\left( c-1\right) \left( 1-x^{\alpha }\right) _{2}F_{1}\left( \mu ,\nu
;c-1;x^{\alpha }\right).
\end{aligned}
\end{equation}

By interchanging $\mu $ and $\nu $ in \eqref{eq6.15},  we have 
\begin{equation}\label{eq6.20}
\begin{aligned}
\left[ 2\nu -c+\left( \mu -\nu \right) x^{\alpha }\right] ~_{2}F_{1}\left(
\mu ,\nu ;c;x^{\alpha }\right)  =&\nu \left( 1-x^{\alpha }\right)
~_{2}F_{1}\left( \mu ,\nu +1;c;x^{\alpha }\right)  \\
&-\left( c-\nu \right) ~_{2}F_{1}\left( \mu ,\nu -1;c;x^{\alpha }\right).
\end{aligned}
\end{equation}

We append this section by driving the CFGHE.
The conformable fractional operator \eqref{eq6.1} can be employed to derive a conformable fractional differential equation characterized by \eqref{eq3.3}.
 
Relation \eqref{eq3.3} with the operator  $\theta ^{\alpha }$ defined by \eqref{eq6.1} gives
\begin{equation*}
\theta ^{\alpha }\left( \theta ^{\alpha }+c-1\right) y=\theta ^{\alpha
}\sum\limits_{n=0}^{\infty }\frac{\left( \mu \right) _{n}\left( \nu \right)
_{n}}{\left( c\right) _{n}n!}\left( n+c-1\right) x^{\alpha
n}=\sum\limits_{n=1}^{\infty }\frac{\left( \mu \right) _{n}\left( \nu
\right) _{n}}{\left( c\right) _{n}n!}n\left( n+c-1\right) x^{\alpha n}
\end{equation*}%

A shift of index yields 
\begin{eqnarray*}
\theta ^{\alpha }\left( \theta ^{\alpha }+c-1\right) y
&=&\sum\limits_{n=0}^{\infty }\frac{\left( \mu \right) _{n+1}\left( \nu
\right) _{n+1}}{\left( c\right) _{n+1}n!}\left( n+c\right) x^{\alpha \left(
n+1\right) } \\
&=&x^{\alpha }\sum\limits_{n=0}^{\infty }\frac{\left( \mu \right) _{n}\left(
\nu \right) _{n}}{\left( c\right) _{n}n!}\left( n+\mu \right) \left( n+\nu
\right) x^{\alpha n}=x^{\alpha }\left( \theta ^{\alpha }+\mu \right) \left(
\theta ^{\alpha }+\nu \right) y
\end{eqnarray*}%

This shows $y=~_{2}F_{1}\left( \mu ,\nu ;c;x^{\alpha }\right) $ is a
solution of the following  CFDE
\begin{equation}\label{eq7.1}
\left[ \theta ^{\alpha }\left( \theta ^{\alpha }+c-1\right) -x^{\alpha
}\left( \theta ^{\alpha }+\mu \right) \left( \theta ^{\alpha }+\nu \right) %
\right] y=0,~\ \ \ \ \ \ \theta ^{\alpha }=\frac{1}{\alpha }x^{\alpha
}D^{\alpha }
\end{equation}

Owing to $\theta ^{\alpha }y=\frac{1}{\alpha }x^{\alpha }D^{\alpha
}y$ and $\theta ^{\alpha }\theta ^{\alpha }y=\frac{1}{\alpha ^{2}}x^{2\alpha
}D^{\alpha }D^{\alpha }y+\frac{1}{\alpha }x^{\alpha }D^{\alpha },$ then equation \eqref{eq7.1} can be written in the form 
\begin{equation*}
x^{\alpha }\left( 1-x^{\alpha }\right) D^{\alpha }D^{\alpha }y+\alpha \left[
c-\left( \mu +\nu +1\right) x^{\alpha }\right] D^{\alpha }y-\alpha ^{2}\mu
\nu y=0,
\end{equation*}
 which coincide with \eqref{eq3.1}
\section{Conformable fractional integral of CFGHF}
Taking into account the $\alpha$-integral given in  Definition \ref{DefFI}, we provide some forms of fractional integral related to the 
$\alpha $-Gauss
hypergeometric function.
Thus according to Definition \ref{DefFI}, it follows that
\begin{equation}\label{eq8.1}
I_{\alpha }f\left( x\right) =\int\limits_{0}^{x}t^{\alpha -1}f\left(t\right) dt. 
\end{equation}
 
In this regard, we state the following important result  given in \cite{khalil2014new}.
\begin{lem}
 Suppose that $f:\left[ 0,\infty \right) \rightarrow 
\mathbb{R}$  is $\alpha $-differentiable for $ \alpha \in (0,1],$ then
for all $ x>0$ one can write: 
\begin{equation}\label{eq8.2}
I_{\alpha }D^{\alpha }\left( f\left( x\right) \right) =f\left( x\right)
-f\left( 0\right) 
\end{equation}
\end{lem}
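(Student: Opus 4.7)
The plan is to reduce the statement to the classical fundamental theorem of calculus by exploiting the way the conformable derivative and the conformable integral are designed to cancel the weight $x^{1-\alpha}$. The first step is to recall the well-known reformulation of the conformable derivative: for any $f$ that is $\alpha$-differentiable at $x>0$, the substitution $u = hx^{1-\alpha}$ in the defining limit
\[
D^{\alpha}f(x) = \lim_{h\to 0} \frac{f(x + hx^{1-\alpha}) - f(x)}{h}
\]
gives $D^{\alpha}f(x) = x^{1-\alpha}\, f'(x)$, provided the ordinary derivative $f'(x)$ exists. This identity is the bridge between the fractional and classical settings and is implicit in the properties (product, quotient, chain rules) already recorded from \cite{khalil2014new,abdeljawad2015conformable}.

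Next, I would insert this identity into the definition of the conformable integral. Starting from
\[
I_{\alpha} D^{\alpha} f(x) = \int_{0}^{x} t^{\alpha-1}\, D^{\alpha} f(t)\, dt,
\]
the representation $D^{\alpha} f(t) = t^{1-\alpha} f'(t)$ causes the weights $t^{\alpha-1}$ and $t^{1-\alpha}$ to annihilate one another, leaving
\[
I_{\alpha} D^{\alpha} f(x) = \int_{0}^{x} f'(t)\, dt.
\]
Now the classical fundamental theorem of calculus yields $f(x)-f(0)$, which is the desired conclusion. To handle the endpoint $t=0$, where $t^{\alpha-1}$ is singular, I would invoke the definition $D^{\alpha} f(0) = \lim_{x\to 0^{+}} D^{\alpha} f(x)$ together with the $\alpha$-differentiability hypothesis on $[0,\infty)$, ensuring that the integrand $t^{\alpha-1}D^{\alpha} f(t) = f'(t)$ is well-behaved near the origin.

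The main obstacle, modest but worth flagging, is precisely this boundary behaviour at $t=0$: one must be careful that $f'$ is integrable up to $0$, or equivalently that $t^{\alpha-1}D^{\alpha}f(t)$ is. Once the identity $D^{\alpha}f(t) = t^{1-\alpha} f'(t)$ is established for $t>0$ and the limiting convention at $t=0$ from the definition is invoked, the argument collapses to a single application of the classical fundamental theorem of calculus and no further computation is needed.
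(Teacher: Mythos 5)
The paper does not prove this lemma at all --- it is quoted verbatim as a known result from Khalil et al.\ \cite{khalil2014new} --- so there is no internal proof to compare against; your argument is, however, exactly the standard proof given in that reference: for $x>0$ the substitution $u=hx^{1-\alpha}$ shows $\alpha$-differentiability is equivalent to ordinary differentiability with $D^{\alpha}f(x)=x^{1-\alpha}f'(x)$, the weight $t^{\alpha-1}$ in $I_{\alpha}$ cancels it, and the classical fundamental theorem of calculus finishes. Your proof is correct, and your flag about integrability of $f'$ up to $t=0$ is a legitimate caveat that the cited source (and this paper) also glosses over.
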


With the aid of \eqref{eq8.1} and  \eqref{eq8.2}, the following result can be
deduced.
\begin{thm}\label{thm7.1}
For $\alpha \in (0,1]$, then the conformable fractional integral $
I_{\alpha }$ of CFGHF, $_{2}F_{1}\left(
\mu ,\nu ;c;x^{\alpha }\right) $ can be written as
\begin{equation}\label{eq8.3}
I_{\alpha }~_{2}F_{1}\left( \mu ,\nu ;c;x^{\alpha }\right) =\frac{\left(
c-1\right) }{\alpha \left( \mu -1\right) \left( \nu -1\right) }\left[
_{2}F_{1}\left( \mu -1,\nu -1;c-1;x^{\alpha }\right) -1\right] 
\end{equation}
\end{thm}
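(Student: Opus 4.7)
The natural plan is to recognize that the identity on the right-hand side is essentially an antiderivative relation, so I will reduce it to formula (5.9)/(5.10) applied in reverse (with parameters shifted down) and then invoke the fundamental theorem (8.2) for conformable integrals.

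First, I would take formula (5.10) with $n=1$, namely
\begin{equation*}
D^{\alpha }\,{}_{2}F_{1}(\mu,\nu;c;x^{\alpha})=\frac{\alpha\mu\nu}{c}\,{}_{2}F_{1}(\mu+1,\nu+1;c+1;x^{\alpha}),
\end{equation*}
and apply the substitution $\mu\mapsto\mu-1$, $\nu\mapsto\nu-1$, $c\mapsto c-1$ (legitimate provided $\mu,\nu\neq 1$ and $c\neq 1$, which is the implicit hypothesis in the statement). This produces
\begin{equation*}
D^{\alpha }\,{}_{2}F_{1}(\mu-1,\nu-1;c-1;x^{\alpha})=\frac{\alpha(\mu-1)(\nu-1)}{c-1}\,{}_{2}F_{1}(\mu,\nu;c;x^{\alpha}),
\end{equation*}
which I can rearrange to express ${}_{2}F_{1}(\mu,\nu;c;x^{\alpha})$ as $\tfrac{c-1}{\alpha(\mu-1)(\nu-1)}\, D^{\alpha}\,{}_{2}F_{1}(\mu-1,\nu-1;c-1;x^{\alpha})$.

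Next, I would apply the conformable fractional integral $I_{\alpha}$ to both sides, pulling the constant $\tfrac{c-1}{\alpha(\mu-1)(\nu-1)}$ outside by linearity of $I_{\alpha}$ (which follows from the integral form in Definition \ref{DefFI}). Using the inversion formula (8.2), $I_{\alpha}D^{\alpha}f(x)=f(x)-f(0)$, on the inner function $f(x)={}_{2}F_{1}(\mu-1,\nu-1;c-1;x^{\alpha})$, and evaluating at $x=0$ via the series (3.3) — whose zeroth term is $1$ and all higher terms vanish — gives $f(0)=1$. Substituting yields the claimed identity.

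The proof is essentially a one-line application of known facts, so there is no serious obstacle; the only subtlety to flag is the tacit assumption that $\mu,\nu,c\notin\{1\}$ so that division in the rearranged derivative formula is valid, and that $f$ is $\alpha$-differentiable on $[0,x]$ (which holds inside the disk $|x^{\alpha}|<1$ by termwise differentiation of the fractional power series). I would include a brief remark after the derivation noting these restrictions, but otherwise the argument proceeds directly from (5.10) and (8.2) without any genuine computation.
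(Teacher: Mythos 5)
Your proposal is correct and follows essentially the same route as the paper: shift the parameters in the derivative formula \eqref{eq5.9} down by one, apply $I_{\alpha}$, and invoke the inversion formula \eqref{eq8.2} together with the value $1$ of the series \eqref{eq3.3} at $x=0$. Your explicit remark on the restrictions $\mu,\nu,c\neq 1$ is a small addition the paper leaves implicit.
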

\begin{proof}
Relation \eqref{eq5.9}, gives
\begin{equation*}
D^{\alpha }~_{2}F_{1}\left( \mu -1,\nu -1;c-1;x^{\alpha }\right) =\frac{
\alpha \left( \mu -1\right) \left( \nu -1\right) }{\left( c-1\right) }
~_{2}F_{1}\left( \mu ,\nu ;c;x^{\alpha }\right) 
\end{equation*}

Acting by the conformable fractional integral on both sides we obtain
\begin{equation*}
I_{\alpha }D^{\alpha }~ _{2}F_{1}\left( \mu -1,\nu -1;c-1;x^{\alpha }\right) =
\frac{\alpha \left( \mu -1\right) \left( \nu -1\right) }{\left( c-1\right) }
 I_{\alpha }~_{2}F_{1}\left( \mu ,\nu ;c;x^{\alpha }\right) 
\end{equation*}

Using \eqref{eq8.2}, we have 
\begin{equation*}
_{2}F_{1}\left( \mu -1,\nu -1;c-1;x^{\alpha }\right) -1=\frac{\alpha \left(
\mu -1\right) \left( \nu -1\right) }{\left( c-1\right) } I_{\alpha
}~_{2}F_{1}\left( \mu ,\nu ;c;x^{\alpha }\right) 
\end{equation*}
Therefore 
\begin{equation*}
I_{\alpha }~_{2}F_{1}\left( \mu ,\nu ;c;x^{\alpha }\right) =\frac{\left(
c-1\right) }{\alpha \left( \mu -1\right) \left( \nu -1\right) }\left[
_{2}F_{1}\left( \mu -1,\nu -1;c-1;x^{\alpha }\right) -1\right] 
\end{equation*}
as required.
\end{proof}
\begin{thm}\label{thm7.2}
For $ \alpha \in (0,1]$, then the CFGHF, $_{2}F_{1}\left( \mu ,\nu ;c;x^{\alpha }\right) $ has a
conformable fractional integral representation in the form 
\begin{equation*}
_{2}F_{1}\left( \mu ,\nu ;c;x^{\alpha }\right) =1+\frac{\alpha \mu \nu }{c}
\int\limits_{0}^{x}~_{2}F_{1}\left( \mu +1,\nu +1;c+1;t^{\alpha }\right)
d_{\alpha }t
\end{equation*}
where $d_{\alpha }t=t^{\alpha -1}dt$
\end{thm}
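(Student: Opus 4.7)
The plan is to invoke the differential formula \eqref{eq5.9} together with the fundamental relation \eqref{eq8.2} connecting the conformable fractional integral and derivative, and then observe that the ``constant of integration'' is precisely $1$.

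First, I would take \eqref{eq5.9}, namely
\begin{equation*}
D^{\alpha}\,{}_2F_1(\mu,\nu;c;x^{\alpha}) = \frac{\alpha\mu\nu}{c}\,{}_2F_1(\mu+1,\nu+1;c+1;x^{\alpha}),
\end{equation*}
and apply the conformable fractional integral operator $I_{\alpha}$ to both sides. By \eqref{eq8.2},
\begin{equation*}
I_{\alpha}D^{\alpha}\,{}_2F_1(\mu,\nu;c;x^{\alpha}) = {}_2F_1(\mu,\nu;c;x^{\alpha}) - {}_2F_1(\mu,\nu;c;0).
\end{equation*}
From the series definition \eqref{eq3.3}, the $n=0$ term is $1$ and all higher terms vanish at $x=0$, so ${}_2F_1(\mu,\nu;c;0)=1$.

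Next, I would unpack the right-hand side using Definition \ref{DefFI}: since $I_{\alpha}f(x)=\int_0^x t^{\alpha-1}f(t)\,dt=\int_0^x f(t)\,d_{\alpha}t$, linearity gives
\begin{equation*}
\frac{\alpha\mu\nu}{c}\,I_{\alpha}\,{}_2F_1(\mu+1,\nu+1;c+1;x^{\alpha}) = \frac{\alpha\mu\nu}{c}\int_0^x {}_2F_1(\mu+1,\nu+1;c+1;t^{\alpha})\,d_{\alpha}t.
\end{equation*}
Equating both expressions and transposing the $1$ yields exactly the stated representation.

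This is essentially a one-step corollary of Theorem \ref{thm7.1} (or rather of the key identity used to prove it); I expect no real obstacle. The only subtle point worth pausing on is the evaluation ${}_2F_1(\mu,\nu;c;0)=1$, which follows immediately from \eqref{eq3.3}, and the verification that the operator identity \eqref{eq8.2} is applicable — but since ${}_2F_1(\mu,\nu;c;x^{\alpha})$ is a fractional power series converging term-by-term on $|x^{\alpha}|<1$, the $\alpha$-differentiability required by the lemma underlying \eqref{eq8.2} is clearly satisfied.
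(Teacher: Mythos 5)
Your proposal is correct and follows essentially the same route as the paper: the paper derives the result by invoking Theorem \ref{thm7.1} with the parameters shifted to $(\mu+1,\nu+1,c+1)$, and Theorem \ref{thm7.1} itself is proved exactly by your computation --- apply $I_{\alpha}$ to the derivative formula \eqref{eq5.9} and use \eqref{eq8.2} with the value $1$ at the origin. The only difference is that you carry out the argument directly at the unshifted parameters rather than routing it through the statement of Theorem \ref{thm7.1}, which is an immaterial reorganization.
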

\begin{proof}
In view of theorem \ref{thm7.1}, we obtain 
\begin{equation*}
I_{\alpha }~_{2}F_{1}\left( \mu +1,\nu +1;c+1;x^{\alpha }\right) =
\frac{c}{\alpha \mu \nu }\left[ _{2}F_{1}\left( \mu ,\nu ;c;x^{\alpha
}\right) -1\right] 
\end{equation*}

Hence, 
\begin{eqnarray*}
_{2}F_{1}\left( \mu ,\nu ;c;x^{\alpha }\right)  &=&1+\frac{\alpha \mu \nu }{c}
I_{\alpha }\left[ _{2}F_{1}\left( \mu +1,\nu +1;c+1;x^{\alpha }\right) 
\right]  \\
&=&1+\frac{\alpha \mu \nu }{c}\int\limits_{0}^{x}~_{2}F_{1}\left( \mu +1,\nu
+1;c+1;t^{\alpha }\right) d_{\alpha }t \\
&=&1+\frac{\alpha \mu \nu }{c}\int\limits_{0}^{x}~_{2}F_{1}\left( \mu +1,\nu
+1;c+1;t^{\alpha }\right) ~t^{\alpha -1}dt
\end{eqnarray*}
as required.
\end{proof}
Now, following \cite{rainville1969special}, we state the following result
\begin{thm}\label{integral representation}
For $\alpha \in (0,1]$ and $c>\nu >0,$ the CFGHF, $_{2}F_{1}\left( \mu ,\nu
;c;x^{\alpha }\right) $ has an integral representation
\begin{equation}\label{eq8.4}
_{2}F_{1}\left( \mu ,\nu ;c;x^{\alpha }\right) =\frac{\Gamma \left( c\right) 
}{\Gamma \left( \nu \right) \Gamma \left( c-\nu \right) }\int\limits_{0}^{1}%
\tau ^{\nu -1}\left( 1-\tau \right) ^{c-\nu -1}\left( 1-x^{\alpha }\tau
\right) ^{-\mu }d\tau
\end{equation}
\end{thm}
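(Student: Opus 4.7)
The plan is to start from the right-hand side of \eqref{eq8.4} and work backward to the series definition \eqref{eq3.3}. Since $c>\nu>0$ guarantees convergence of the Beta-type integrals and $|x^{\alpha}|<1$ guarantees uniform convergence of the binomial expansion of the factor $\left(1-x^{\alpha}\tau\right)^{-\mu}$ on $\tau\in[0,1]$, the strategy is a classical term-by-term integration.

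First, I would expand the factor involving $x^{\alpha}$ using the binomial series
\begin{equation*}
\left(1-x^{\alpha}\tau\right)^{-\mu}=\sum_{n=0}^{\infty}\frac{(\mu)_n}{n!}\left(x^{\alpha}\tau\right)^{n},\qquad |x^{\alpha}\tau|<1,
\end{equation*}
which is valid uniformly in $\tau\in[0,1]$ for $|x^{\alpha}|<1$. Inserting this into the right-hand side of \eqref{eq8.4} and interchanging the sum with the $\tau$-integral (justified by uniform convergence) yields
\begin{equation*}
\frac{\Gamma(c)}{\Gamma(\nu)\Gamma(c-\nu)}\sum_{n=0}^{\infty}\frac{(\mu)_n}{n!}\,x^{\alpha n}\int_{0}^{1}\tau^{\nu+n-1}(1-\tau)^{c-\nu-1}d\tau.
\end{equation*}

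Next, I would identify the inner integral with the Beta function:
\begin{equation*}
\int_{0}^{1}\tau^{\nu+n-1}(1-\tau)^{c-\nu-1}d\tau=B(\nu+n,c-\nu)=\frac{\Gamma(\nu+n)\,\Gamma(c-\nu)}{\Gamma(c+n)},
\end{equation*}
which is finite thanks to the assumptions $\nu>0$ and $c-\nu>0$. Substituting back causes $\Gamma(c-\nu)$ to cancel, and using the Pochhammer identities $\Gamma(\nu+n)/\Gamma(\nu)=(\nu)_n$ and $\Gamma(c)/\Gamma(c+n)=1/(c)_n$ produces
\begin{equation*}
\sum_{n=0}^{\infty}\frac{(\mu)_n(\nu)_n}{(c)_n\,n!}\,x^{\alpha n},
\end{equation*}
which is precisely $_{2}F_{1}(\mu,\nu;c;x^{\alpha})$ by \eqref{eq3.3}, completing the proof.

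The main obstacle is essentially cosmetic rather than substantive: one must carefully track the convergence hypotheses so that both the binomial expansion and the Beta integral are simultaneously valid, and so that the interchange of sum and integral is legitimate. Since the conformable fractional setting enters only through the substitution $x\mapsto x^{\alpha}$ and does not alter the $\tau$-integral, no genuinely new fractional-calculus machinery is needed; the argument parallels the classical Euler representation verbatim, and no appeal to the CFD operator $D^{\alpha}$ is required here.
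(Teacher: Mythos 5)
Your proof is correct and uses exactly the same ingredients as the paper's — the binomial expansion of $\left( 1-x^{\alpha }\tau \right) ^{-\mu }$, the Beta integral $B\left( \nu +n,c-\nu \right)$, and a term-by-term interchange — merely run in the opposite direction (the paper starts from the series \eqref{eq3.3} and builds up to the integral, while you start from the integral and collapse it to the series). This is the classical Euler representation argument in both cases, so no substantive difference.
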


\begin{proof}
From the definition of CFGHF \eqref{eq3.3}, we have
\begin{equation*}
\begin{split}
_{2}F_{1}\left( \mu ,\nu ;c;x^{\alpha }\right) &=\sum\limits_{n=0}^{\infty }\frac{\left( \mu \right) _{n}~\Gamma
\left( \nu +n\right) ~\Gamma \left( c\right) }{\Gamma \left( \nu \right)
\Gamma \left( c+n\right) n!}x^{\alpha n} \\
&=\frac{\Gamma \left( c\right) }{\Gamma \left( \nu \right) \Gamma \left(
c-\nu \right) }\sum\limits_{n=0}^{\infty }\frac{\left( \mu \right)
_{n}~\Gamma \left( \nu +n\right) ~\Gamma \left( c-\nu \right) }{\Gamma
\left( c+n\right) n!}x^{\alpha n} \\
&=\frac{\Gamma \left( c\right) }{\Gamma \left( \nu \right) \Gamma \left(
c-\nu \right) }\sum\limits_{n=0}^{\infty }\beta \left( c-\nu ,\nu +n\right) 
\frac{\left( \mu \right) _{n}~}{n!}x^{\alpha n}.
\end{split}
\end{equation*}

Using the integral form of beta function, we get
\begin{equation*}
_{2}F_{1}\left( \mu ,\nu ;c;x^{\alpha }\right)  =\frac{\Gamma \left( c\right) }{\Gamma \left( \nu \right) \Gamma \left(
c-\nu \right) }\int\limits_{0}^{1}\tau ^{\nu -1}\left( 1-\tau \right)
^{c-\nu -1}.\sum\limits_{n=0}^{\infty }\frac{\left( \mu \right) _{n}~}{n!}%
\left( x^{\alpha }\tau \right) ^{n}d\tau. 
\end{equation*}

By using the identity $\sum\limits_{n=0}^{\infty }\frac{\left( \mu \right) _{n}%
}{n!}~t^{n}=\left( 1-t\right) ^{-\left( \mu \right) };~~\left\vert
t\right\vert <1$, we have 
\begin{equation*}
_{2}F_{1}\left( \mu ,\nu ;c;x^{\alpha }\right) =\frac{\Gamma \left( c\right) 
}{\Gamma \left( \nu \right) \Gamma \left( c-\nu \right) }\int\limits_{0}^{1}%
\tau ^{\nu -1}\left( 1-\tau \right) ^{c-\nu -1}.\left( 1-x^{\alpha }\tau
\right) ^{-\mu }d\tau
\end{equation*}
as required.
\end{proof}
\section{Recursion formulas for $_{2}F_{1}\left( \protect\mu ,\nu ;c;x^{\protect\alpha }\right) $}
Employing the assertion in theorem \ref{integral representation}, and owing to the results given in \cite{opps2009recursion} we state the following recursion  formulas.
\begin{thm}\label{tR1}
The following recursion formulas hold  for the CFGHF 
\begin{align}
_{2}F_{1}\left( \mu +n,\nu ;c;x^{\alpha }\right) &=~_{2}F_{1}\left( \mu
,\nu ;c;x^{\alpha }\right) +\frac{\nu x^{\alpha }}{c}\sum
\limits_{k=1}^{n}~_{2}F_{1}\left( \mu +n-k+1,\nu +1;c+1;x^{\alpha }\right) \label{eq9.1}\\
_{2}F_{1}\left( \mu -n,\nu ;c;x^{\alpha }\right) &=~_{2}F_{1}\left( \mu
,\nu ;c;x^{\alpha }\right) -\frac{\nu x^{\alpha }}{c}\sum%
\limits_{k=1}^{n}~_{2}F_{1}\left( \mu -k+1,\nu +1;c+1;x^{\alpha }\right) , \label{eq9.2}
\end{align} 
where $\left\vert x^{\alpha }\right\vert <1,~n\in 
\mathbb{N}
_{0}=
\mathbb{N}
\cup \left\{ 0\right\}.$
\end{thm}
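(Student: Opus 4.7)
I will establish both recursion formulas by induction on $n$, using as the essential tool the integral representation \eqref{eq8.4} from Theorem \ref{integral representation}. The whole argument hinges on a single elementary telescoping identity,
$$(1-x^{\alpha}\tau)^{-(\mu+1)} - (1-x^{\alpha}\tau)^{-\mu} = x^{\alpha}\tau\,(1-x^{\alpha}\tau)^{-(\mu+1)},$$
which, when inserted into \eqref{eq8.4}, converts the difference ${}_2F_1(\mu+1,\nu;c;x^{\alpha}) - {}_2F_1(\mu,\nu;c;x^{\alpha})$ into an Eulerian integral whose integrand is $\tau^{\nu}(1-\tau)^{c-\nu-1}(1-x^{\alpha}\tau)^{-(\mu+1)}$. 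After shifting the beta weights to $(\nu+1, c-\nu)$ by multiplying and dividing by the appropriate gamma quotient, one recognizes this integral as $\frac{\nu x^{\alpha}}{c}\,{}_2F_1(\mu+1,\nu+1;c+1;x^{\alpha})$ via the same Theorem \ref{integral representation}. This yields the base case $n=1$:
$${}_2F_1(\mu+1,\nu;c;x^{\alpha}) = {}_2F_1(\mu,\nu;c;x^{\alpha}) + \frac{\nu x^{\alpha}}{c}\,{}_2F_1(\mu+1,\nu+1;c+1;x^{\alpha}),$$
which is exactly \eqref{eq9.1} with $n=1$.

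For the inductive step of \eqref{eq9.1}, assume the formula holds for $n$. I apply the just-established $n=1$ identity with $\mu$ replaced by $\mu+n$ to obtain
$${}_2F_1(\mu+n+1,\nu;c;x^{\alpha}) = {}_2F_1(\mu+n,\nu;c;x^{\alpha}) + \frac{\nu x^{\alpha}}{c}\,{}_2F_1(\mu+n+1,\nu+1;c+1;x^{\alpha}),$$
then substitute the inductive hypothesis for ${}_2F_1(\mu+n,\nu;c;x^{\alpha})$. The new term produced by the base case becomes the $k=1$ summand of the enlarged sum (corresponding to parameter $\mu+n+1$), while the old summands for $k=1,\dots,n$ shift to indices $k=2,\dots,n+1$, thereby reconstituting $\sum_{k=1}^{n+1}{}_2F_1(\mu+n-k+2,\nu+1;c+1;x^{\alpha})$, which is precisely the claimed sum at level $n+1$.

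For \eqref{eq9.2}, I repeat the same strategy but built on the companion base case obtained by replacing $\mu$ by $\mu-1$ in the identity above:
$${}_2F_1(\mu-1,\nu;c;x^{\alpha}) = {}_2F_1(\mu,\nu;c;x^{\alpha}) - \frac{\nu x^{\alpha}}{c}\,{}_2F_1(\mu,\nu+1;c+1;x^{\alpha}),$$
and then iterate the recursion downward, with an analogous re-indexing of the sum.

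The only delicate point, and the one requiring real care, is the index bookkeeping when passing from the inductive hypothesis at level $n$ to the statement at level $n+1$: the summand freshly generated by the base case must be absorbed as the first term of the new sum while the previous summands each shift by one position. Apart from this shuffling, which is purely mechanical, the argument reduces to the integral representation and an elementary algebraic identity.
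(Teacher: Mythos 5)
Your proof is correct, and it rests on the same foundation as the paper's --- the Euler-type integral representation \eqref{eq8.4} combined with the splitting $\left( 1-x^{\alpha }\tau \right) ^{-\mu -1}=\left( 1-x^{\alpha }\tau \right) ^{-\mu }+x^{\alpha }\tau \left( 1-x^{\alpha }\tau \right) ^{-\mu -1}$ --- but you process the resulting extra integral differently, and your route is the cleaner of the two. The paper does not evaluate that integral directly: it rewrites $\int_{0}^{1}\tau ^{\nu }\left( 1-\tau \right) ^{c-\nu -1}\left( 1-x^{\alpha }\tau \right) ^{-\mu -n-1}d\tau $ as $\frac{1}{\alpha \left( \mu +n\right) }D^{\alpha }$ applied to the integral with exponent $-\mu -n$, obtaining the one-step recursion \eqref{eq9.3} in which the increment appears as a conformable derivative $D^{\alpha }\left\{ _{2}F_{1}\left( \mu +n-1,\nu ;c;x^{\alpha }\right) \right\} $; it then iterates this $n$ times and only at the very end invokes the derivative formula \eqref{eq5.9} to convert each derivative term into $\frac{\alpha \left( \mu +n-k\right) \nu }{c}~_{2}F_{1}\left( \mu +n-k+1,\nu +1;c+1;x^{\alpha }\right) $. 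You instead absorb the factor $\tau $ into the beta weight, shifting $\left( \nu ,c\right) $ to $\left( \nu +1,c+1\right) $, and read off the increment immediately as $\frac{\nu x^{\alpha }}{c}~_{2}F_{1}\left( \mu +1,\nu +1;c+1;x^{\alpha }\right) $ from Theorem \ref{integral representation} itself; this bypasses the conformable derivative and \eqref{eq5.9} entirely, and your formal induction replaces the paper's informal ``apply recursively $n$ times.'' What your version buys is economy and rigor in the iteration; what the paper's version buys is that the intermediate identities \eqref{eq9.3} and \eqref{eq9.5}, expressed through $D^{\alpha }$, are reused verbatim in the displayed derivations, and the dependence on \eqref{eq5.9} makes the link to the differential forms of Section 5 explicit. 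One cosmetic remark: for \eqref{eq9.2} the freshly generated term enters as the \emph{last} summand $k=n+1$ rather than the first, so no re-indexing of the old summands is actually needed there; your phrase ``analogous re-indexing'' is harmless but slightly imprecise.
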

\begin{proof}
By means of \eqref{eq8.4}, we have
\begin{equation*}
\begin{split}
_{2}F_{1}\left( \mu +n,\nu ;c;x^{\alpha }\right) =&\frac{\Gamma \left( c\right) }{\Gamma \left( \nu \right) \Gamma \left(
c-\nu \right) }\int\limits_{0}^{1}\tau ^{\nu -1}\left( 1-\tau \right)
^{c-\nu -1}\left( 1-x^{\alpha }\tau \right) ^{-\mu -n-1}d\tau \\
&-\frac{\Gamma \left( c\right) x^{\alpha }}{\Gamma \left( \nu \right)
\Gamma \left( c-\nu \right) }\int\limits_{0}^{1}\tau ^{\nu }\left( 1-\tau
\right) ^{c-\nu -1}\left( 1-x^{\alpha }\tau \right) ^{-\mu -n-1}d\tau \\
=&\frac{\Gamma \left( c\right) }{\Gamma \left( \nu \right) \Gamma \left(
c-\nu \right) }\int\limits_{0}^{1}\tau ^{\nu -1}\left( 1-\tau \right)
^{c-\nu -1}\left( 1-x^{\alpha }\tau \right) ^{-\mu -n-1}d\tau \\
&-\frac{x^{\alpha }}{\alpha \left( \mu +n\right) }\left\{ \frac{\alpha
\left( \mu +n\right) \Gamma \left( c\right) }{\Gamma \left( \nu \right)
\Gamma \left( c-\nu \right) }\int\limits_{0}^{1}\tau ^{\nu }\left( 1-\tau
\right) ^{c-\nu -1}\left( 1-x^{\alpha }\tau \right) ^{-\mu -n-1}d\tau
\right\}
\end{split}
\end{equation*}
In virtue of conformable derivative, we may write
\begin{align*}
_{2}F_{1}\left( \mu +n,\nu ;c;x^{\alpha }\right) &=\frac{\Gamma \left(
c\right) }{\Gamma \left( \nu \right) \Gamma \left( c-\nu \right) }%
\int\limits_{0}^{1}\tau ^{\nu -1}\left( 1-\tau \right) ^{c-\nu -1}\left(
1-x^{\alpha }\tau \right) ^{-\mu -n-1}d\tau \\
&-\frac{x^{\alpha }}{\alpha \left( \mu +n\right) }D^{\alpha }\left\{ \frac{%
\Gamma \left( c\right) }{\Gamma \left( \nu \right) \Gamma \left( c-\nu
\right) }\int\limits_{0}^{1}\tau ^{\nu -1}\left( 1-\tau \right) ^{c-\nu
-1}\left( 1-x^{\alpha }\tau \right) ^{-\mu -n}d\tau \right\}.
\end{align*}

Again using \eqref{eq8.4}, we have
\begin{equation*}
_{2}F_{1}\left( \mu +n,\nu ;c;x^{\alpha }\right) =~_{2}F_{1}\left( \mu
+n+1,\nu ;c;x^{\alpha }\right) -\frac{x^{\alpha }}{\alpha \left( \mu
+n\right) }D^{\alpha }\left\{ _{2}F_{1}\left( \mu +n,\nu ;c;x^{\alpha
}\right) \right\}.
\end{equation*}

Thus,
\begin{equation*}
_{2}F_{1}\left( \mu +n-1,\nu ;c;x^{\alpha }\right) =~_{2}F_{1}\left( \mu
+n,\nu ;c;x^{\alpha }\right) -\frac{x^{\alpha }}{\alpha \left( \mu
+n-1\right) }D^{\alpha }\left\{ _{2}F_{1}\left( \mu +n-1,\nu ;c;x^{\alpha
}\right) \right\} ,
\end{equation*}
or
\begin{equation}\label{eq9.3}
_{2}F_{1}\left( \mu +n,\nu ;c;x^{\alpha }\right) =~_{2}F_{1}\left( \mu
+n-1,\nu ;c;x^{\alpha }\right) ~+\frac{x^{\alpha }}{\alpha \left( \mu
+n-1\right) }D^{\alpha }\left\{ _{2}F_{1}\left( \mu +n-1,\nu ;c;x^{\alpha
}\right) \right\}
\end{equation}

Applying this last identity \eqref{eq9.3}, we get
\begin{eqnarray*}
_{2}F_{1}\left( \mu +n,\nu ;c;x^{\alpha }\right) &=&~_{2}F_{1}\left( \mu
+n-2,\nu ;c;x^{\alpha }\right) +\frac{x^{\alpha }}{\alpha \left( \mu
+n-2\right) }D^{\alpha }\left\{ _{2}F_{1}\left( \mu +n-2,\nu ;c;x^{\alpha
}\right) \right\} \\
&&~+\frac{x^{\alpha }}{\alpha \left( \mu +n-1\right) }D^{\alpha }\left\{
_{2}F_{1}\left( \mu +n-1,\nu ;c;x^{\alpha }\right) \right\} \\
&=&~_{2}F_{1}\left( \mu +n-2,\nu ;c;x^{\alpha }\right) +\frac{x^{\alpha }}{%
\alpha }.\sum\limits_{k=1}^{2}\frac{1}{\left( \mu +n-k\right) }D^{\alpha
}\left\{ _{2}F_{1}\left( \mu +n-k,\nu ;c;x^{\alpha }\right) \right\} .
\end{eqnarray*}

Again apply \eqref{eq9.3} recursively n-times, we obtain
\begin{equation}\label{eq9.4}
_{2}F_{1}\left( \mu +n,\nu ;c;x^{\alpha }\right) =~_{2}F_{1}\left( \mu ,\nu
;c;x^{\alpha }\right) +\frac{x^{\alpha }}{\alpha }.\sum\limits_{k=1}^{n}
\frac{1}{\left( \mu +n-k\right) }D^{\alpha }\left\{ _{2}F_{1}\left( \mu
+n-k,\nu ;c;x^{\alpha }\right) \right\} .
\end{equation}

Using \eqref{eq5.9}, we have
\begin{equation*}
_{2}F_{1}\left( \mu +n,\nu ;c;x^{\alpha }\right) =~_{2}F_{1}\left( \mu ,\nu
;c;x^{\alpha }\right) +\frac{x^{\alpha }\nu }{c}.\sum\limits_{k=1}^{n}\left%
\{ _{2}F_{1}\left( \mu +n-k+1,\nu +1;c+1;x^{\alpha }\right) \right\} .
\end{equation*}

Furthermore,  the assertion of theorem \ref{integral representation} gives 
\begin{equation*}
\begin{split}
_{2}F_{1}\left( \mu -n,\nu ;c;x^{\alpha }\right) =&\frac{\Gamma \left( c\right) }{\Gamma \left( \nu \right) \Gamma \left(
c-\nu \right) }\int\limits_{0}^{1}\tau ^{\nu -1}\left( 1-\tau \right)
^{c-\nu -1}\left( 1-x^{\alpha }\tau \right) ^{n-\mu -1}d\tau \\
&-\frac{\Gamma \left( c\right) x^{\alpha }}{\Gamma \left( \nu \right)
\Gamma \left( c-\nu \right) }\int\limits_{0}^{1}\tau ^{\nu }\left( 1-\tau
\right) ^{c-\nu -1}\left( 1-x^{\alpha }\tau \right) ^{n-\mu -1}d\tau \\
=&\frac{\Gamma \left( c\right) }{\Gamma \left( \nu \right) \Gamma \left(
c-\nu \right) }\int\limits_{0}^{1}\tau ^{\nu -1}\left( 1-\tau \right)
^{c-\nu -1}\left( 1-x^{\alpha }\tau \right) ^{n-\mu -1}d\tau \\
&-\frac{x^{\alpha }}{\alpha \left( \mu -n\right) }\left\{ \frac{\alpha
\left( \mu -n\right) \Gamma \left( c\right) }{\Gamma \left( \nu \right)
\Gamma \left( c-\nu \right) }\int\limits_{0}^{1}\tau ^{\nu }\left( 1-\tau
\right) ^{c-\nu -1}\left( 1-x^{\alpha }\tau \right) ^{n-\mu -1}d\tau \right\} \\
=&\frac{\Gamma \left( c\right) }{\Gamma \left( \nu \right) \Gamma \left(
c-\nu \right) }\int\limits_{0}^{1}\tau ^{\nu -1}\left( 1-\tau \right)
^{c-\nu -1}\left( 1-x^{\alpha }\tau \right) ^{n-\mu -1}d\tau \\
&-\frac{x^{\alpha }}{\alpha \left( \mu -n\right) } D^{\alpha } \left\{ \frac{ \Gamma \left( c\right) }{\Gamma \left( \nu \right)
\Gamma \left( c-\nu \right) }\int\limits_{0}^{1}\tau ^{\nu -1 }\left( 1-\tau
\right) ^{c-\nu -1}\left( 1-x^{\alpha }\tau \right) ^{n-\mu }d\tau \right\}
\end{split}
\end{equation*}

Relying on  the integral representation \eqref{eq8.4}, we have
\begin{equation*}
_{2}F_{1}\left( \mu -n,\nu ;c;x^{\alpha }\right) =~_{2}F_{1}\left( \mu
-n+1,\nu ;c;x^{\alpha }\right) -\frac{x^{\alpha }}{\alpha \left( \mu
-n\right) }D^{\alpha }\left\{ _{2}F_{1}\left( \mu -n,\nu ;c;x^{\alpha
}\right) \right\}.
\end{equation*}

Therefore,
\begin{equation*}
_{2}F_{1}\left( \mu -n-1,\nu ;c;x^{\alpha }\right) =~_{2}F_{1}\left( \mu
-n,\nu ;c;x^{\alpha }\right) -\frac{x^{\alpha }}{\alpha \left( \mu
-n-1\right) }D^{\alpha }\left\{ _{2}F_{1}\left( \mu -n-1,\nu ;c;x^{\alpha
}\right) \right\} ,
\end{equation*}
or
\begin{equation}\label{eq9.5}
_{2}F_{1}\left( \mu -n,\nu ;c;x^{\alpha }\right) =~_{2}F_{1}\left( \mu
-n-1,\nu ;c;x^{\alpha }\right) ~+\frac{x^{\alpha }}{\alpha \left( \mu
-n-1\right) }D^{\alpha }\left\{ _{2}F_{1}\left( \mu -n-1,\nu ;c;x^{\alpha
}\right) \right\}.
\end{equation}

Applying relation \eqref{eq9.5} recursively, we obtain
\begin{align*}
_{2}F_{1}\left( \mu -n,\nu ;c;x^{\alpha }\right) &=~_{2}F_{1}\left( \mu
-n-2,\nu ;c;x^{\alpha }\right) +\frac{x^{\alpha }}{\alpha \left( \mu
-n-2\right) }D^{\alpha }\left\{ _{2}F_{1}\left( \mu -n-2,\nu ;c;x^{\alpha
}\right) \right\} \\
&~+\frac{x^{\alpha }}{\alpha \left( \mu -n-1\right) }D^{\alpha }\left\{
_{2}F_{1}\left( \mu -n-1,\nu ;c;x^{\alpha }\right) \right\} \\
&=~_{2}F_{1}\left( \mu -n-2,\nu ;c;x^{\alpha }\right) +\frac{x^{\alpha }}{%
\alpha }\sum\limits_{k=1}^{2}\frac{1}{\left( \mu -n-k\right) }D^{\alpha
}\left\{ _{2}F_{1}\left( \mu -n-k,\nu ;c;x^{\alpha }\right) \right\} .
\end{align*}

Repeating the recurrence relation \eqref{eq9.5} $n$-times and dappling the derivative
formula \eqref{eq5.9}, we have
\begin{equation}\label{eq9.6}
_{2}F_{1}\left( \mu -n,\nu ;c;x^{\alpha }\right) =~_{2}F_{1}\left( \mu
-2n,\nu ;c;x^{\alpha }\right) +\frac{x^{\alpha }\nu }{c}.\sum%
\limits_{k=1}^{n}\left\{ _{2}F_{1}\left( \mu -n-k+1,\nu +1;c+1;x^{\alpha
}\right) \right\} .
\end{equation}

The relation \eqref{eq9.2} follows directly from \eqref{eq9.6} upon replacing $\mu $ by $\left( \mu
+n\right) $ where $ n\in 
\mathbb{N}
_{0}=
\mathbb{N}
\cup \left\{ 0\right\} .$
\end{proof}

\begin{thm}\label{tR2}
For $\alpha \in (0,1],$ the following recursion formulas hold true
for the CFGHF, $_{2}F_{1}\left( \mu ,\nu ;c;x^{\alpha }\right) $ 
\begin{equation}\label{eq9.7}
\begin{split}
_{2}F_{1}\left( \mu +n,\nu ;c;x^{\alpha }\right) 
=&~\sum\limits_{k=0}^{n}\binom{n}{k}
 \frac{\left( \nu \right) _{k}~}{\left( c\right) _{k}}x^{\alpha
k}~_{2}F_{1}\left( \mu +k,\nu +k;c+k;x^{\alpha }\right) ,
\end{split}
\end{equation}
and
\begin{equation}\label{eq9.8}
\begin{split}
_{2}F_{1}\left( \mu -n,\nu ;c;x^{\alpha }\right) 
=&~\sum\limits_{k=0}^{n}\left( -1\right) ^{k}\binom{n}{k} \frac{\left( \nu \right) _{k}~}{\left( c\right) _{k}}x^{\alpha
k}~_{2}F_{1}\left( \mu ,\nu +k;c+k;x^{\alpha }\right) , 
\end{split}
\end{equation}
where 
$\left\vert x^{\alpha }\right\vert <1,~n\in 
\mathbb{N}
_{0}=
\mathbb{N}
\cup \left\{ 0\right\}$.
\end{thm}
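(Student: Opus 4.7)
The plan is to derive both recursion formulas directly from the conformable fractional integral representation established in Theorem \ref{integral representation}, following the approach used in \cite{opps2009recursion}. The key algebraic trick is a finite binomial expansion that peels off an integer power of $(1-x^\alpha\tau)$ from the Gauss kernel.

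For \eqref{eq9.7}, I would begin by writing $(1-x^\alpha\tau)^{-\mu-n}$ using the identity
\begin{equation*}
(1-x^\alpha\tau)^{-\mu-n}=(1-x^\alpha\tau)^{-\mu-n}\bigl((1-x^\alpha\tau)+x^\alpha\tau\bigr)^{n}=\sum_{k=0}^{n}\binom{n}{k}(x^\alpha\tau)^{k}(1-x^\alpha\tau)^{-\mu-k}.
\end{equation*}
Substituting this into the integral representation for $_2F_1(\mu+n,\nu;c;x^\alpha)$ and exchanging sum and integral (legitimate since the sum is finite) gives
\begin{equation*}
{}_2F_1(\mu+n,\nu;c;x^\alpha)=\sum_{k=0}^{n}\binom{n}{k}x^{\alpha k}\,\frac{\Gamma(c)}{\Gamma(\nu)\Gamma(c-\nu)}\int_{0}^{1}\tau^{\nu+k-1}(1-\tau)^{c-\nu-1}(1-x^\alpha\tau)^{-\mu-k}d\tau.
\end{equation*}
Since $(c+k)-(\nu+k)=c-\nu$, the inner integral is precisely the representation of $_2F_1(\mu+k,\nu+k;c+k;x^\alpha)$ up to a ratio of gamma functions. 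Recognizing that ratio as $\Gamma(\nu+k)\Gamma(c-\nu)/\Gamma(c+k)$ and rewriting $\Gamma(c)\Gamma(\nu+k)/[\Gamma(\nu)\Gamma(c+k)]=(\nu)_k/(c)_k$ yields \eqref{eq9.7}.

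For \eqref{eq9.8}, the argument is entirely parallel but uses a plain binomial expansion of a positive integer power: since $-\mu-(-n)=n-\mu$, we have
\begin{equation*}
(1-x^\alpha\tau)^{-\mu+n}=(1-x^\alpha\tau)^{n}(1-x^\alpha\tau)^{-\mu}=\sum_{k=0}^{n}\binom{n}{k}(-1)^{k}(x^\alpha\tau)^{k}(1-x^\alpha\tau)^{-\mu}.
\end{equation*}
Plugging this into the integral representation of $_2F_1(\mu-n,\nu;c;x^\alpha)$ and again identifying the resulting $\tau^{\nu+k-1}$-integral against $_2F_1(\mu,\nu+k;c+k;x^\alpha)$ produces \eqref{eq9.8}.

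The main obstacle is really just the bookkeeping: one has to be careful that the shift $\nu\mapsto\nu+k$ in the integral is compensated by exactly the shift $c\mapsto c+k$ so that the $(1-\tau)^{c-\nu-1}$ factor is preserved, and that the gamma-function ratio collapses to Pochhammer symbols $(\nu)_k/(c)_k$. No convergence issues arise because every binomial expansion used is a finite sum, and the restriction $|x^\alpha|<1$ together with $c>\nu>0$ (inherited from Theorem \ref{integral representation}) keeps the integral representation valid throughout.
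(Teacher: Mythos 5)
Your proof is correct, but it follows a genuinely different route from the paper. The paper derives \eqref{eq9.7} and \eqref{eq9.8} by iterating the one-step recursions \eqref{eq9.1} and \eqref{eq9.2} of Theorem \ref{tR1}: it works out the cases $n=1,2,3$ explicitly (equations \eqref{eq9.9} through \eqref{eq9.15}), recognizes the binomial coefficients and Pochhammer ratios emerging, and then asserts ``in general, we may write'' the closed forms --- an induction that is sketched rather than completed. You instead go straight to the Euler-type integral representation \eqref{eq8.4} and peel off the integer shift in $\mu$ by a finite binomial expansion: $\bigl((1-x^{\alpha}\tau)+x^{\alpha}\tau\bigr)^{n}=1$ for the $+n$ case and the plain expansion of $(1-x^{\alpha}\tau)^{n}$ for the $-n$ case, after which each term is identified with a shifted ${}_{2}F_{1}$ via the ratio $\Gamma(c)\Gamma(\nu+k)/[\Gamma(\nu)\Gamma(c+k)]=(\nu)_{k}/(c)_{k}$. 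I checked both identifications: the exponent bookkeeping $(c+k)-(\nu+k)=c-\nu$ works exactly as you say, and the resulting coefficients match \eqref{eq9.7} and \eqref{eq9.8}. Your argument buys a complete proof in one stroke with no unproved induction step and no reliance on Theorem \ref{tR1}; it is closer in spirit to the paper's own proof of Theorem \ref{tR3}, which uses precisely this binomial-expansion-under-the-integral device for the shift in $c$. The paper's approach, by contrast, stays within the algebra of contiguous relations and makes the recursive structure visible, but at the cost of leaving the general-$n$ step informal. The only caveat, which you already flag, is that the integral representation requires $c>\nu>0$, a hypothesis inherited from Theorem \ref{integral representation} but not stated in Theorem \ref{tR2} itself; the paper's proof of Theorem \ref{tR1} carries the same implicit restriction, so you are no worse off.
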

\begin{proof}
From \eqref{eq9.1} of theorem \ref{tR1} with $n=1,$ we see
\begin{equation}\label{eq9.9}
_{2}F_{1}\left( \mu +1,\nu ;c;x^{\alpha }\right) =~_{2}F_{1}\left( \mu ,\nu
;c;x^{\alpha }\right) +\frac{\nu x^{\alpha }}{c}~_{2}F_{1}\left( \mu +1,\nu
+1;c+1;x^{\alpha }\right) ,
\end{equation}
with $n=2,$ we have
\begin{equation}\label{eq9.10}
\begin{split}
_{2}F_{1}\left( \mu +2,\nu ;c;x^{\alpha }\right) =&~_{2}F_{1}\left( \mu
,\nu ;c;x^{\alpha }\right) +\frac{\nu x^{\alpha }}{c}~_{2}F_{1}\left( \mu
+1,\nu +1;c+1;x^{\alpha }\right) \\
&+\frac{\nu x^{\alpha }}{c}~_{2}F_{1}\left( \mu +2,\nu +1;c+1;x^{\alpha
}\right)
\end{split}
\end{equation}

Making use of \eqref{eq9.9} and \eqref{eq9.10}, we obtain
\begin{equation}\label{eq9.11}
\begin{split}
_{2}F_{1}\left( \mu +2,\nu ;c;x^{\alpha }\right)  =&~_{2}F_{1}\left( \mu
,\nu ;c;x^{\alpha }\right) +\frac{2\nu x^{\alpha }}{c}~_{2}F_{1}\left( \mu
+1,\nu +1;c+1;x^{\alpha }\right)  \\
&+\frac{\nu \left( \nu +1\right) x^{2\alpha }}{c\left( c+1\right) }%
~_{2}F_{1}\left( \mu +2,\nu +2;c+2;x^{\alpha }\right) .
\end{split}
\end{equation}

Using \eqref{eq9.9} and \eqref{eq9.11} with  $n=3,$ it follows that 
\begin{equation}\label{eq9.12}
\begin{split}
_{2}F_{1}\left( \mu +3,\nu ;c;x^{\alpha }\right)  =&~_{2}F_{1}\left( \mu
,\nu ;c;x^{\alpha }\right) +\frac{3\nu x^{\alpha }}{c}~_{2}F_{1}\left( \mu
+1,\nu +1;c+1;x^{\alpha }\right)  \\
&+\frac{3\nu \left( \nu +1\right) x^{2\alpha }}{c\left( c+1\right) }%
~_{2}F_{1}\left( \mu +2,\nu +2;c+2;x^{\alpha }\right)  \\
&+\frac{\nu \left( \nu +1\right) \left( \nu +2\right) x^{3\alpha }}{c\left(
c+1\right) \left( c+2\right) }~_{2}F_{1}\left( \mu +3,\nu +3;c+3;x^{\alpha
}\right) .
\end{split}
\end{equation}

Relation \eqref{eq9.12} can be written in the form
\begin{equation*}
_{2}F_{1}\left( \mu +3,\nu ;c;x^{\alpha }\right)
=~\sum\limits_{k=0}^{3}\left( 
\begin{array}{c}
3 \\ 
k
\end{array}
\right) \frac{\left( \nu \right) _{k}~}{\left( c\right) _{k}}x^{\alpha
k}~_{2}F_{1}\left( \mu +k,\nu +k;c+k;x^{\alpha }\right) .
\end{equation*}

In general, we may write that 
\begin{equation*}
_{2}F_{1}\left( \mu +n,\nu ;c;x^{\alpha }\right)
=~\sum\limits_{k=0}^{n}\left( 
\begin{array}{c}
n \\ 
k
\end{array}
\right) \frac{\left( \nu \right) _{k}~}{\left( c\right) _{k}}x^{\alpha
k}~_{2}F_{1}\left( \mu +k,\nu +k;c+k;x^{\alpha }\right) .
\end{equation*}

In order to prove \eqref{eq9.8}, we note from  \eqref{eq9.2} of theorem \ref{tR1} $
\left( \text{with }n=1\right) $ that 
\begin{equation}\label{eq9.13}
_{2}F_{1}\left( \mu -1,\nu ;c;x^{\alpha }\right) =~_{2}F_{1}\left( \mu ,\nu
;c;x^{\alpha }\right) -\frac{\nu x^{\alpha }}{c}~_{2}F_{1}\left( \mu ,\nu
+1;c+1;x^{\alpha }\right) .
\end{equation}

Similarly, $\left( \text{with }
n=2\right) $ yields 
\begin{equation}\label{eq9.14}
\begin{split}
_{2}F_{1}\left( \mu -2,\nu ;c;x^{\alpha }\right)  =&~_{2}F_{1}\left( \mu
,\nu ;c;x^{\alpha }\right) -\frac{\nu x^{\alpha }}{c}~_{2}F_{1}\left( \mu
,\nu +1;c+1;x^{\alpha }\right)  \\
&-\frac{\nu x^{\alpha }}{c}~_{2}F_{1}\left( \mu -1,\nu +1;c+1;x^{\alpha
}\right) .
\end{split}
\end{equation}

Inserting \eqref{eq9.13} in \eqref{eq9.14}, we get 
\begin{equation}\label{eq9.15}
\begin{split}
_{2}F_{1}\left( \mu -2,\nu ;c;x^{\alpha }\right)  =&~_{2}F_{1}\left( \mu
,\nu ;c;x^{\alpha }\right) -\frac{2\nu x^{\alpha }}{c}~_{2}F_{1}\left( \mu
,\nu +1;c+1;x^{\alpha }\right)  \\
&+\frac{\nu \left( \nu +1\right) x^{2\alpha }}{c\left( c+1\right) }
~_{2}F_{1}\left( \mu ,\nu +2;c+2;x^{\alpha }\right) .
\end{split}
\end{equation}

Using the Pochhammer symbol we may write \eqref{eq9.15} as
\begin{equation*}
_{2}F_{1}\left( \mu -2,\nu ;c;x^{\alpha }\right)
=~\sum\limits_{k=0}^{2}\left( -1\right) ^{k}\left( 
\begin{array}{c}
2 \\ 
k
\end{array}
\right) \frac{\left( \nu \right) _{k}~}{\left( c\right) _{k}}x^{\alpha
k}~_{2}F_{1}\left( \mu ,\nu +k;c+k;x^{\alpha }\right) .
\end{equation*}

Thus, in general, we may write 
\begin{equation*}
_{2}F_{1}\left( \mu -n,\nu ;c;x^{\alpha }\right) =~\sum\limits_{k=0}^{n}\left( -1\right) ^{k}\left( 
\begin{array}{c}
n \\ 
k
\end{array}
\right) \frac{\left( \nu \right) _{k}~}{\left( c\right) _{k}}x^{\alpha
k}~_{2}F_{1}\left( \mu ,\nu +k;c+k;x^{\alpha }\right) ,
\end{equation*}
just as required in \eqref{eq9.8}.
\end{proof}

\begin{thm}\label{tR3}
For $\alpha \in (0,1],$ the following recursion formulas hold true for the
CFGHF, $_{2}F_{1}\left( \mu ,\nu ;c;x^{\alpha }\right) $ 
\begin{equation}\label{eq9.16}
\begin{split}
_{2}F_{1}\left( \mu ,\nu ;c+n;x^{\alpha }\right)  =&~\frac{\left( c\right)
_{n}}{\left( c-\nu \right) _{n}}\sum\limits_{k=0}^{n}\left( -1\right)
^{k}\left( 
\begin{array}{c}
n \\ 
k
\end{array}
\right) \frac{\left( \nu \right) _{k}~}{\left( c\right) _{k}}
~_{2}F_{1}\left( \mu ,\nu +k;c+k;x^{\alpha }\right) , \\
&~\left( \left\vert x^{\alpha }\right\vert <1,c+n\notin 
\mathbb{Z}
_{0}^{-},~n\in 
\mathbb{N}
_{0}=%
\mathbb{N}
\cup \left\{ 0\right\} \right) 
\end{split}
\end{equation}
\end{thm}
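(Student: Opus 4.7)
The plan is to begin from the integral representation supplied by Theorem \ref{integral representation}, applied directly to $_{2}F_{1}(\mu,\nu;c+n;x^{\alpha})$, namely
\begin{equation*}
{}_{2}F_{1}(\mu,\nu;c+n;x^{\alpha})=\frac{\Gamma(c+n)}{\Gamma(\nu)\Gamma(c+n-\nu)}\int_{0}^{1}\tau^{\nu-1}(1-\tau)^{c+n-\nu-1}(1-x^{\alpha}\tau)^{-\mu}\,d\tau .
\end{equation*}
The crucial observation is that since $n$ is a non-negative integer the weight factorizes as $(1-\tau)^{c+n-\nu-1}=(1-\tau)^{c-\nu-1}(1-\tau)^{n}$, which lets us expand the polynomial piece via the binomial theorem,
\begin{equation*}
(1-\tau)^{n}=\sum_{k=0}^{n}(-1)^{k}\binom{n}{k}\tau^{k}.
\end{equation*}
Because the sum is finite, interchanging summation and integration is automatic, and no convergence issue arises.

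Next I would recognize each resulting integral
\begin{equation*}
\int_{0}^{1}\tau^{\nu+k-1}(1-\tau)^{c-\nu-1}(1-x^{\alpha}\tau)^{-\mu}d\tau
\end{equation*}
as the integral representation of another CFGHF read in reverse: applying Theorem \ref{integral representation} with the parameter replacements $\nu\mapsto\nu+k$ and $c\mapsto c+k$ — a consistent choice because $(c+k)-(\nu+k)=c-\nu$, so the exponent of $1-\tau$ is unchanged — each integral evaluates to $\frac{\Gamma(\nu+k)\Gamma(c-\nu)}{\Gamma(c+k)}\,{}_{2}F_{1}(\mu,\nu+k;c+k;x^{\alpha})$.

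Substituting back, the prefactor on the $k$-th term is
\begin{equation*}
\frac{\Gamma(c+n)\,\Gamma(c-\nu)\,\Gamma(\nu+k)}{\Gamma(\nu)\,\Gamma(c+n-\nu)\,\Gamma(c+k)}(-1)^{k}\binom{n}{k},
\end{equation*}
and the final step is a routine Pochhammer tidy-up: writing $\Gamma(c+n)/\Gamma(c+n-\nu)=\tfrac{(c)_{n}}{(c-\nu)_{n}}\cdot\tfrac{\Gamma(c)}{\Gamma(c-\nu)}$ and then using $\Gamma(\nu+k)/\Gamma(\nu)=(\nu)_{k}$ together with $\Gamma(c)/\Gamma(c+k)=1/(c)_{k}$, the coefficient collapses to $\frac{(c)_{n}}{(c-\nu)_{n}}(-1)^{k}\binom{n}{k}\frac{(\nu)_{k}}{(c)_{k}}$, which is exactly the coefficient in \eqref{eq9.16}.

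The only mildly nontrivial part is the six-Gamma bookkeeping at the end; everything else is a single binomial expansion inside one application of the integral representation. Theorem \ref{integral representation} is stated under the reality assumption $c>\nu>0$, which also guarantees convergence of every integral above (since $c+k>\nu+k>0$ for each $k$). The identity then extends to the full stated parameter range $c+n\notin\mathbb{Z}_{0}^{-}$ by analytic continuation in $c$ and $\nu$, as both sides are meromorphic in these parameters with poles only on the excluded set.
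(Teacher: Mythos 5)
Your proof is correct and follows essentially the same route as the paper: both start from the integral representation of Theorem \ref{integral representation} applied to $_{2}F_{1}(\mu,\nu;c+n;x^{\alpha})$, split off the factor $(1-\tau)^{n}$, expand it by the binomial theorem, and re-identify each resulting integral as $_{2}F_{1}(\mu,\nu+k;c+k;x^{\alpha})$ after the Gamma/Pochhammer bookkeeping. Your closing remark on extending the identity beyond $c>\nu>0$ by analytic continuation is a small addition the paper omits, but the argument is otherwise the same.
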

\begin{proof}
In view of \eqref{eq8.4}, we have 
\begin{equation*}
_{2}F_{1}\left( \mu ,\nu ;c+n;x^{\alpha }\right)  =\frac{\Gamma \left( c+n\right) }{\Gamma \left( \nu \right) \Gamma \left(
c+n-\nu \right) }\int\limits_{0}^{1}\tau ^{\nu -1}\left( 1-\tau \right)
^{c-\nu -1}\left( 1-x^{\alpha }\tau \right) ^{-\mu }.\left( 1-\tau \right)
^{n}d\tau 
\end{equation*}

Using the binomial theorem, we obtain 
\begin{equation}\label{eq9.17}
_{2}F_{1}\left( \mu ,\nu ;c+n;x^{\alpha }\right) =\frac{\Gamma \left(
c+n\right) }{\Gamma \left( \nu \right) \Gamma \left( c+n-\nu \right) }%
\int\limits_{0}^{1}\sum\limits_{k=0}^{n}\left( -1\right) ^{k}\left( 
\begin{array}{c}
n \\ 
k
\end{array}
\right) \tau ^{\nu +k-1}\left( 1-\tau \right) ^{c-\nu -1}\left( 1-x^{\alpha
}\tau \right) ^{-\mu }d\tau 
\end{equation}

Using the definition of the Pochhammer symbol, we may write \eqref{eq9.17} as 
\begin{eqnarray*}
_{2}F_{1}\left( \mu ,\nu ;c+n;x^{\alpha }\right)  &=&\frac{\left( c\right)
_{n}}{\left( c-\nu \right) _{n}}\sum\limits_{k=0}^{n}\left( -1\right)
^{k}\left( 
\begin{array}{c}
n \\ 
k
\end{array}
\right) \frac{\left( \nu \right) _{k}~}{\left( c\right) _{k}}.\frac{\Gamma
\left( c+k\right) }{\Gamma \left( \nu +k\right) \Gamma \left( c-\nu \right) }
\\
&&.\int\limits_{0}^{1}\tau ^{\nu +k-1}\left( 1-\tau \right) ^{c+k-\nu
-k-1}\left( 1-x^{\alpha }\tau \right) ^{-\mu }d\tau 
\end{eqnarray*}

Applying \eqref{eq8.4}, we obtain 
\begin{equation*}
_{2}F_{1}\left( \mu ,\nu ;c+n;x^{\alpha }\right) =~\frac{\left( c\right) _{n}%
}{\left( c-\nu \right) _{n}}\sum\limits_{k=0}^{n}\left( -1\right) ^{k}\left( 
\begin{array}{c}
n \\ 
k
\end{array}
\right) \frac{\left( \nu \right) _{k}~}{\left( c\right) _{k}}%
~_{2}F_{1}\left( \mu ,\nu +k;c+k;x^{\alpha }\right) ,
\end{equation*}
just as required in theorem \ref{tR3}.
\end{proof}
\section{Fractional Laplace transform of the CFGHF}
In \cite{abdeljawad2015conformable}, Abdeljawad defined the fractional Laplace transform in the conformable sense as follows:
\begin{defn} \cite{abdeljawad2015conformable}
Let $\alpha \in (0,1]$  and $f:[0,\infty )\rightarrow \mathbb{R} $  be real valued function. Then the fractional Laplace transform of
order $\alpha $  is defined by

\begin{equation}\label{eq10.1}
L_{\alpha }\left[ f\left( t\right) \right] =F_{\alpha }\left(
s\right) =\int\limits_{0}^{\infty }e^{-s\left( \frac{t^{\alpha }}{\alpha }
\right) }f\left( t\right) ~d_{\alpha }t=\int\limits_{0}^{\infty }e^{-s\left( 
\frac{t^{\alpha }}{\alpha }\right) }f\left( t\right) t^{\alpha -1}dt.
\end{equation}
\end{defn}
\begin{rem}
If $\alpha =1,$ then \eqref{eq10.1} is the classical definition of the Laplace transform of integer order.
\end{rem}

Also, the author in \cite{abdeljawad2015conformable} gave the following interesting results.
\begin{lem} \cite{abdeljawad2015conformable}
Let $\alpha \in (0,1]$  and $f:[0,\infty )\rightarrow \mathbb{R} $  be real valued function such that \\ $L_{\alpha }\left[
f\left( t\right) \right] =F_{\alpha }\left( s\right) $ exist. Then 
$ F_{\alpha }\left( s\right) = L\left[ f\left( \alpha t\right) ^{\frac{1}{\alpha }}\right],$ 
where $L\left[ f\left( t\right) 
\right] =\int\limits_{0}^{\infty }e^{-st}f\left( t\right) dt.$
\end{lem}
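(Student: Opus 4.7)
The plan is to prove the identity directly from the definition \eqref{eq10.1} by a single change of variable in the defining integral. Starting from
\[
F_{\alpha}(s)=\int_{0}^{\infty} e^{-s\left(\frac{t^{\alpha}}{\alpha}\right)} f(t)\, t^{\alpha-1}\,dt,
\]
I would introduce the substitution $u=\dfrac{t^{\alpha}}{\alpha}$, which is a smooth bijection from $(0,\infty)$ onto $(0,\infty)$ since $\alpha\in(0,1]$.

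Next I would compute the differential and inverse: differentiating gives $du=t^{\alpha-1}\,dt$, so the factor $t^{\alpha-1}\,dt$ appearing in the integrand is absorbed cleanly into $du$. Solving for $t$ yields $t=(\alpha u)^{1/\alpha}$, so that $f(t)=f\bigl((\alpha u)^{1/\alpha}\bigr)$. Substituting everything back converts the integral into
\[
F_{\alpha}(s)=\int_{0}^{\infty} e^{-su}\, f\bigl((\alpha u)^{1/\alpha}\bigr)\,du,
\]
and recognizing the right-hand side as the classical Laplace transform of the function $u\mapsto f\bigl((\alpha u)^{1/\alpha}\bigr)$ evaluated at $s$ concludes the proof.

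There is no real obstacle here; the main point is simply to verify that the substitution is licit and that the Jacobian $t^{\alpha-1}\,dt=du$ exactly cancels the weight in the conformable integral measure $d_{\alpha}t$. Implicit assumptions to mention are that $f$ is measurable and that the integral defining $F_{\alpha}(s)$ converges absolutely (which is exactly the hypothesis ``$L_{\alpha}[f(t)]=F_{\alpha}(s)$ exists''), so Fubini/absolute convergence issues do not arise and the change of variable is legitimate. Finally, as a sanity check, one should note that setting $\alpha=1$ reduces $(\alpha u)^{1/\alpha}$ to $u$ and recovers the classical Laplace transform, consistent with the remark preceding the lemma.
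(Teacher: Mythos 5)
Your proof is correct: the substitution $u=t^{\alpha}/\alpha$ with $du=t^{\alpha-1}\,dt$ and $t=(\alpha u)^{1/\alpha}$ converts the conformable transform directly into the classical Laplace transform of $u\mapsto f\bigl((\alpha u)^{1/\alpha}\bigr)$, which is exactly the standard argument for this identity. The paper itself states this lemma without proof (it is quoted from the cited reference), and your change-of-variables computation is the same one used there, so there is nothing further to compare.
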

\begin{lem}\label{Laplace}\cite{abdeljawad2015conformable}
The following the conformable fractional Laplace transform of certain functions:
\begin{multicols}{2}
\begin{enumerate}
\item[(1)] $L_{\alpha }\left[ 1\right] =\frac{1}{s};~\ s>0$
\item[(2)] $L_{\alpha }\left[ t^{p}\right] =\alpha ^{\frac{p}{
\alpha }}\frac{\Gamma \left( 1+\frac{p}{\alpha }\right) }{s^{1+\frac{p}{
\alpha }}};~\ s>0$ \newline

\item[(3)] $L_{\alpha }\left[ e^{k\frac{t^{\alpha }}{\alpha }}\right] =\frac{
1}{s-k}$
\end{enumerate}
\end{multicols}
\end{lem}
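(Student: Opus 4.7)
The plan is to verify the three formulas directly from the definition \eqref{eq10.1}, exploiting the natural substitution $u=t^{\alpha}/\alpha$ which converts the conformable Laplace integral into an ordinary Laplace-type integral. (Equivalently, the preceding lemma asserts that $F_{\alpha}(s)=L[f((\alpha t)^{1/\alpha})]$, so one could route every computation through that identity; I will keep the direct-substitution path in what follows, since it makes the algebra completely transparent.) Under $u=t^{\alpha}/\alpha$ one has $du=t^{\alpha-1}dt$ and $t=(\alpha u)^{1/\alpha}$, and the integral \eqref{eq10.1} becomes
\begin{equation*}
L_{\alpha}[f(t)]=\int_{0}^{\infty}e^{-su}\,f\bigl((\alpha u)^{1/\alpha}\bigr)\,du.
\end{equation*}
With this reduction in hand, every part of the lemma reduces to a textbook Gamma-function evaluation.

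For part (1), setting $f\equiv 1$ gives $L_{\alpha}[1]=\int_{0}^{\infty}e^{-su}\,du=1/s$ for $s>0$. For part (2), taking $f(t)=t^{p}$ yields $f((\alpha u)^{1/\alpha})=\alpha^{p/\alpha}u^{p/\alpha}$, so
\begin{equation*}
L_{\alpha}[t^{p}]=\alpha^{p/\alpha}\int_{0}^{\infty}u^{p/\alpha}e^{-su}\,du=\alpha^{p/\alpha}\,\frac{\Gamma\!\left(1+\tfrac{p}{\alpha}\right)}{s^{1+p/\alpha}},
\end{equation*}
after the standard substitution $v=su$ in the Gamma integral. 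For part (3), taking $f(t)=e^{kt^{\alpha}/\alpha}$ gives $f((\alpha u)^{1/\alpha})=e^{ku}$, so
\begin{equation*}
L_{\alpha}\!\left[e^{k t^{\alpha}/\alpha}\right]=\int_{0}^{\infty}e^{-(s-k)u}\,du=\frac{1}{s-k},
\end{equation*}
valid for $s>k$.

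There is no real obstacle here: the only subtlety worth flagging is that the substitution $u=t^{\alpha}/\alpha$ is monotone on $(0,\infty)$ for $\alpha\in(0,1]$, so the change of variable is legitimate and preserves the limits of integration; and one must record the convergence conditions ($s>0$ in (1)–(2), $s>k$ in (3), together with $1+p/\alpha>0$ in (2) so that the Gamma factor is finite). Once these are stated, the three identities follow with no further work.
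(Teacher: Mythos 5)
Your verification is correct: the substitution $u=t^{\alpha}/\alpha$ does reduce \eqref{eq10.1} to a classical Laplace/Gamma integral, and all three evaluations, together with the convergence conditions you record ($s>0$, $s>k$, $p/\alpha>-1$), are accurate. Note that the paper itself offers no proof of this lemma --- it is quoted verbatim from \cite{abdeljawad2015conformable} --- so there is no in-paper argument to compare against; your route is in any case the intended one, being precisely the content of the preceding quoted identity $F_{\alpha}(s)=L\bigl[f\bigl((\alpha t)^{1/\alpha}\bigr)\bigr]$ made explicit.
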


Owing to the definition of CFGHF and applying  the conformable fractional Laplace transform operator of   an arbitrary order $\gamma \in (0,1]$, we have
\begin{equation}\label{eq10.2}
L_{\gamma }\left[ ~_{2}F_{1}\left( \mu ,\nu ;c;x^{\alpha }\right) 
\right] =L_{\gamma }\left[ \sum\limits_{n=0}^{\infty }\frac{
\left( \mu \right) _{n}\left( \nu \right) _{n}}{\left( c\right) _{n}n!}
x^{\alpha n}\right] =\sum\limits_{n=0}^{\infty }\frac{\left( \mu \right)
_{n}\left( \nu \right) _{n}}{\left( c\right) _{n}n!} L_{\gamma
}\left\{ x^{\alpha n}\right\} 
\end{equation}

Using (2) of lemma \ref{Laplace}, we obtain
\begin{equation}\label{eq10.3}
L_{\gamma }\left[ ~_{2}F_{1}\left( \mu ,\nu ;c;x^{\alpha }\right) 
\right] =\sum\limits_{n=0}^{\infty }\frac{\left( \mu \right) _{n}\left( \nu
\right) _{n}}{\left( c\right) _{n}n!}\gamma ^{\frac{n\alpha }{\gamma }}\frac{
\Gamma \left( 1+\frac{n\alpha }{\gamma }\right) }{s^{1+\frac{n\alpha }{
\gamma }}}
\end{equation}
\begin{rem}
If $\gamma =\alpha $ in \eqref{eq10.3} we have
\begin{equation}\label{eq10.4}
L_{\alpha }\left[ ~_{2}F_{1}\left( \mu ,\nu ;c;x^{\alpha }\right) 
\right] =\sum\limits_{n=0}^{\infty }\frac{\left( \mu \right) _{n}\left( \nu
\right) _{n}}{\left( c\right) _{n}n!}\frac{\alpha ^{n}\Gamma \left(
1+n\right) }{s^{1+n}}=\sum\limits_{n=0}^{\infty }\frac{\alpha ^{n}\left( \mu
\right) _{n}\left( \nu \right) _{n}}{\left( c\right) _{n}s^{1+n}}.
\end{equation}
\end{rem}
\begin{thm}
Let $\alpha \in (0,1]$ and $_{2}F_{1}\left( \mu ,\nu ;c;x^{\alpha }\right) $
be a conformable fractional hypergeometric function, then we have
\begin{equation}\label{eq10.5}
L_{\alpha }\left[ ~_{2}F_{1}\left( \mu ,\nu ;1;x^{\alpha }\left(
1-e^{-\frac{t^{\alpha }}{\alpha }}\right) \right) \right] =\frac{1}{s}
~_{2}F_{1}\left( \mu ,\nu ;s+1;x^{\alpha }\right)
\end{equation}
\end{thm}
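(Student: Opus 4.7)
The plan is to expand the CFGHF under the Laplace transform via its series definition \eqref{eq3.3} and evaluate term by term. Setting $u = x^{\alpha}\bigl(1 - e^{-t^{\alpha}/\alpha}\bigr)$, I would write
\begin{equation*}
{}_2F_1\!\left(\mu,\nu;1; u\right) \;=\; \sum_{n=0}^{\infty}\frac{(\mu)_n(\nu)_n}{(1)_n\,n!}\,x^{\alpha n}\bigl(1-e^{-t^{\alpha}/\alpha}\bigr)^{n},
\end{equation*}
use linearity of $L_{\alpha}$ to interchange sum and transform, and then expand $(1-e^{-t^{\alpha}/\alpha})^{n}$ by the binomial theorem into $\sum_{k=0}^{n}\binom{n}{k}(-1)^{k}e^{-kt^{\alpha}/\alpha}$.

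At that point, part (3) of Lemma \ref{Laplace} (with $k$ replaced by $-k$) gives $L_{\alpha}[e^{-kt^{\alpha}/\alpha}] = 1/(s+k)$, so the inner Laplace transform reduces to
\begin{equation*}
L_{\alpha}\!\left[\bigl(1-e^{-t^{\alpha}/\alpha}\bigr)^{n}\right] \;=\; \sum_{k=0}^{n}\binom{n}{k}\frac{(-1)^{k}}{s+k}.
\end{equation*}
The key algebraic step is the partial-fraction identity
\begin{equation*}
\sum_{k=0}^{n}\binom{n}{k}\frac{(-1)^{k}}{s+k} \;=\; \frac{n!}{s(s+1)(s+2)\cdots(s+n)} \;=\; \frac{n!}{s\,(s+1)_{n}},
\end{equation*}
which I would justify either by induction on $n$ or by reading off the residues of the rational function on the right (at each simple pole $s=-k$, the residue is $(-1)^{k}\binom{n}{k}$).

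Plugging this back and using $(1)_n = n!$ collapses the double series to
\begin{equation*}
L_{\alpha}\!\left[{}_2F_1(\mu,\nu;1;u)\right] \;=\; \sum_{n=0}^{\infty}\frac{(\mu)_n(\nu)_n}{n!\,n!}\cdot\frac{n!\,x^{\alpha n}}{s\,(s+1)_n} \;=\; \frac{1}{s}\sum_{n=0}^{\infty}\frac{(\mu)_n(\nu)_n}{(s+1)_n\,n!}\,x^{\alpha n},
\end{equation*}
which is exactly $\tfrac{1}{s}\,{}_2F_1(\mu,\nu;s+1;x^{\alpha})$ by \eqref{eq3.3}. The main obstacle is the partial-fraction identity together with the justification of the interchange of sum and integral, which can be handled by absolute convergence on the region $|x^{\alpha}|<1$ (since $|1-e^{-t^{\alpha}/\alpha}|\le 1$ for $t\ge 0$), ensuring uniform convergence of the series against the exponential weight in \eqref{eq10.1}. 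Once these technical points are settled, the proof reduces to the computational chain above.
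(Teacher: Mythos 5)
Your proposal is correct and follows essentially the same route as the paper: series expansion of the CFGHF, binomial expansion of $\bigl(1-e^{-t^{\alpha}/\alpha}\bigr)^{n}$, term-by-term application of part (3) of Lemma \ref{Laplace}, and collapse of the inner sum to $\frac{n!}{s\,(s+1)_{n}}$. The only (cosmetic) difference is that you evaluate that inner sum by the partial-fraction identity directly, whereas the paper writes it as the terminating series $\frac{1}{s\,n!}\,{}_{2}F_{1}(-n,s;s+1;1)$ and invokes the Chu--Vandermonde evaluation $\frac{(1)_{n}}{(s+1)_{n}}$ --- the two computations are equivalent.
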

\begin{proof}
Using \eqref{eq3.3} and \eqref{eq10.1}, one can see  
\begin{eqnarray}\label{eq10.6}
\begin{aligned}
L_{\alpha }\left[ ~_{2}F_{1}\left( \mu ,\nu ;1;x^{\alpha }\left(
1-e^{-\frac{t^{\alpha }}{\alpha }}\right) \right) \right] &= L_{\alpha }\left[ ~\sum\limits_{n=0}^{\infty }\frac{\left( \mu \right)
_{n}\left( \nu \right) _{n}}{\left( 1\right) _{n}n!}x^{\alpha n}\left( 1-e^{-
\frac{t^{\alpha }}{\alpha }}\right) ^{n}\right] \\
&=\sum\limits_{n=0}^{\infty }\frac{\left( \mu \right) _{n}\left( \nu
\right) _{n}}{n!}x^{\alpha n} L_{\alpha }\left[ ~\frac{1}{n!}
\left( 1-e^{-\frac{t^{\alpha }}{\alpha }}\right) ^{n}\right]
\end{aligned}
\end{eqnarray}

 But
\begin{eqnarray*}
L_{\alpha }\left[ ~\frac{1}{n!}\left( 1-e^{-\frac{
t^{\alpha }}{\alpha }}\right) ^{n}\right] = L_{\alpha }\left[ ~
\frac{1}{n!}\sum\limits_{k=0}^{n}\frac{\left( -n\right) _{k}}{k!}e^{-k\frac{
t^{\alpha }}{\alpha }}\right] 
=\frac{1}{n!}\sum\limits_{k=0}^{n}\frac{\left( -n\right) _{k}}{k!}
L_{\alpha }\left\{ e^{-k\frac{t^{\alpha }}{\alpha }}\right\}
\end{eqnarray*}

Using (3) of lemma \ref{Laplace}, we have
\begin{equation*}
L_{\alpha }\left[ ~\frac{1}{n!}\left( 1-e^{-\frac{
t^{\alpha }}{\alpha }}\right) ^{n}\right] =\frac{1}{n!}\sum\limits_{k=0}^{n}
\frac{\left( -n\right) _{k}}{k!}\frac{1}{s+k}
\end{equation*}

Since $\left( -n\right) _{k}=0$ if $k>n,$ then we can write
\begin{equation}\label{eq10.7}
L_{\alpha }\left[ ~\frac{1}{n!}\left( 1-e^{-\frac{
t^{\alpha }}{\alpha }}\right) ^{n}\right] =\frac{1}{n!}\sum\limits_{k=0}^{
\infty }\frac{\left( -n\right) _{k}}{k!\left( s+k\right) }
\end{equation}

Using $\frac{\left( s\right) _{k}}{s\left( s+1\right) _{k}}=\frac{1}{s+k},$ \eqref{eq10.7} becomes
\begin{eqnarray}\label{eq10.8}
\begin{aligned}
L_{\alpha }\left[ ~\frac{1}{n!}\left( 1-e^{-\frac{t^{\alpha }}{
\alpha }}\right) ^{n}\right]  &=\frac{1}{n!}\sum\limits_{k=0}^{\infty }
\frac{\left( -n\right) _{k}\left( s\right) _{k}}{s\left( s+1\right) _{k}k!}=
\frac{1}{s.n!}~_{2}F_{1}\left( -n,s;s+1;1\right)  \\
&=\frac{1}{s.n!}\frac{\left( 1\right) _{n}}{\left( s+1\right) _{n}}=\frac{1
}{s\left( s+1\right) _{n}}
\end{aligned}
\end{eqnarray}

Substituting \eqref{eq10.8} into \eqref{eq10.6}, we have 
\begin{eqnarray*}
L_{\alpha }\left[ ~_{2}F_{1}\left( \mu ,\nu ;1;x^{\alpha }\left(
1-e^{-\frac{t^{\alpha }}{\alpha }}\right) \right) \right] 
&=&\sum\limits_{n=0}^{\infty }\frac{\left( \mu \right) _{n}\left( \nu
\right) _{n}}{n!}\frac{1}{s\left( s+1\right) _{n}}x^{\alpha n} \\
&=&\frac{1}{s}
~_{2}F_{1}\left( \mu ,\nu ;s+1;x^{\alpha }\right) 
\end{eqnarray*}
as required.
\end{proof}

\begin{thm}
Let $\alpha \in (0,1]$ and $_{2}F_{1}\left( \mu ,\nu ;c;x^{\alpha }\right) $
be a conformable fractional hypergeometric function, then
\begin{equation}\label{eq10.9}
L_{\alpha }\left[ ~t^{\alpha n}\sin \left( at^{\alpha }\right) 
\right] =\frac{a\alpha ^{n+1}\Gamma \left( n+2\right) }{s^{n+2}}
~_{2}F_{1}\left( \frac{n+2}{2},\frac{n+3}{2};\frac{3}{2};-\left( \frac{
\alpha a}{s}\right) ^{2}\right) 
\end{equation}
\end{thm}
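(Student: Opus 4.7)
The plan is to expand the sine, apply the fractional Laplace transform termwise using part (2) of Lemma \ref{Laplace}, and then identify the resulting power series as a $_2F_1$ through a Pochhammer--duplication computation.

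Concretely, I would first write
\begin{equation*}
t^{\alpha n}\sin(at^{\alpha})=\sum_{k=0}^{\infty}\frac{(-1)^{k}a^{2k+1}}{(2k+1)!}\,t^{\alpha(n+2k+1)},
\end{equation*}
and (assuming termwise application is legitimate on the domain where both sides converge) apply $L_{\alpha}$ to each monomial. Using $L_{\alpha}[t^{p}]=\alpha^{p/\alpha}\Gamma(1+p/\alpha)/s^{1+p/\alpha}$ with $p=\alpha(n+2k+1)$, I get
\begin{equation*}
L_{\alpha}\!\left[t^{\alpha n}\sin(at^{\alpha})\right]=\frac{a\alpha^{n+1}}{s^{n+2}}\sum_{k=0}^{\infty}\frac{(-1)^{k}\,\Gamma(n+2k+2)}{(2k+1)!}\left(\frac{\alpha a}{s}\right)^{2k}.
\end{equation*}

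The core algebraic step is then to rewrite $\Gamma(n+2k+2)/(2k+1)!$ as a ratio of Pochhammer symbols that matches the target $_2F_1$. I would use $\Gamma(n+2k+2)=\Gamma(n+2)(n+2)_{2k}$ together with the duplication identity $(a)_{2k}=2^{2k}\bigl(\tfrac{a}{2}\bigr)_{k}\bigl(\tfrac{a+1}{2}\bigr)_{k}$ applied at $a=n+2$, and the analogous identity $(2k+1)!=2^{2k}k!\bigl(\tfrac{3}{2}\bigr)_{k}$ (which follows from $(3/2)_{k}=(2k+1)!/(2^{2k}k!)$). The factors of $2^{2k}$ cancel, leaving
\begin{equation*}
\frac{\Gamma(n+2k+2)}{(2k+1)!}=\Gamma(n+2)\,\frac{\bigl(\tfrac{n+2}{2}\bigr)_{k}\bigl(\tfrac{n+3}{2}\bigr)_{k}}{\bigl(\tfrac{3}{2}\bigr)_{k}\,k!}.
\end{equation*}

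Substituting this back and absorbing the $(-1)^{k}$ into the argument as $\bigl(-(\alpha a/s)^{2}\bigr)^{k}$, the series becomes precisely $\Gamma(n+2)\,{}_{2}F_{1}\!\bigl(\tfrac{n+2}{2},\tfrac{n+3}{2};\tfrac{3}{2};-(\alpha a/s)^{2}\bigr)$, which yields the claimed identity. I expect the main obstacle to be the Pochhammer/duplication bookkeeping in this last algebraic step; the termwise application of $L_{\alpha}$ and the sine expansion are routine, and justifying the interchange of sum and integral is standard in the region where the resulting $_2F_1$ converges (for $s$ sufficiently large so that $|\alpha a/s|<1$).
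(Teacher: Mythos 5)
Your proposal is correct and follows essentially the same route as the paper: expand $\sin(at^{\alpha})$, apply $L_{\alpha}$ termwise via part (2) of Lemma \ref{Laplace}, and convert $\Gamma(n+2k+2)/(2k+1)!$ into the Pochhammer ratio $(n+2)_{2k}/(2)_{2k}$ followed by the duplication splitting into $\bigl(\tfrac{n+2}{2}\bigr)_{k}\bigl(\tfrac{n+3}{2}\bigr)_{k}/\bigl(\tfrac{3}{2}\bigr)_{k}k!$. Your version is in fact slightly more careful than the paper's, since you track the $2^{2k}$ factors in the duplication identities explicitly and note their cancellation, whereas the paper suppresses them.
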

\begin{proof}
First, we see that  
\begin{eqnarray*}
L_{\alpha }\left[ ~t^{\alpha n}\sin \left( at^{\alpha
}\right) \right]  &=& L_{\alpha }\left[ ~t^{\alpha
n}\sum\limits_{k=0}^{\infty }\frac{\left( -1\right) ^{k}a^{2k+1}}{\left(
2k+1\right) !}t^{\alpha \left( 2k+1\right) }\right]  \\
&=&\sum\limits_{k=0}^{\infty }\frac{\left( -1\right) ^{k}a^{2k+1}}{\left(
2k+1\right) !} L_{\alpha }\left\{ t^{\alpha \left( n+2k+1\right)
}\right\} 
\end{eqnarray*}

Using (2) of lemma \ref{Laplace}, it follows that
\begin{eqnarray*}
L_{\alpha }\left[ ~t^{\alpha n}\sin \left( at^{\alpha
}\right) \right]  &=&\sum\limits_{k=0}^{\infty }\frac{\left( -1\right)
^{k}a^{2k+1}}{\left( 2k+1\right) !}\alpha ^{n+2k+1}\frac{\Gamma \left(
n+2k+2\right) }{s^{n+2k+2}} \\
&=&\frac{a\alpha ^{n+1}\Gamma \left( n+2\right) }{s^{n+2}}
\sum\limits_{k=0}^{\infty }\frac{\Gamma \left( n+2k+2\right) }{\Gamma \left(
n+2\right) \left( 2k+1\right) !}\left( \frac{ -\alpha ^{2}a^{2}}{s^{2}}
\right) ^{k} \\
&=&\frac{a\alpha ^{n+1}\Gamma \left( n+2\right) }{s^{n+2}}
\sum\limits_{k=0}^{\infty }\frac{\left( n+2\right) _{2k}}{\left( 2\right)
_{2k}}\left( \frac{-\alpha ^{2}a^{2}}{s^{2}}\right) ^{k}
\end{eqnarray*}
But $\left( n+2\right) _{2k}=\left( \frac{n+2}{2}\right)
_{k}.\left( \frac{n+3}{2}\right) _{k}$ and $\left( 2\right) _{2k}=\left(
1\right) _{k}.\left( \frac{3}{2}\right) _{k}=\left( \frac{3}{2}\right) _{k}k!
$.
Therefore
\begin{equation*}
\begin{split}
L_{\alpha }\left[ ~t^{\alpha n}\sin \left( at^{\alpha }\right) 
\right]  &=\frac{a\alpha ^{n+1}\Gamma \left( n+2\right) }{s^{n+2}}
\sum\limits_{k=0}^{\infty }\frac{\left( \frac{n+2}{2}\right) _{k}.\left( 
\frac{n+3}{2}\right) _{k}}{\left( \frac{3}{2}\right) _{k}k!}\left( \frac{ - \alpha ^{2}a^{2}}{s^{2}}\right) ^{k} \\
&=\frac{a\alpha ^{n+1}\Gamma \left( n+2\right) }{s^{n+2}}~_{2}F_{1}\left( 
\frac{n+2}{2},\frac{n+3}{2};\frac{3}{2};-\left( \frac{\alpha a}{s}\right)
^{2}\right). 
\end{split}
\end{equation*}
\end{proof}
\section{Applications}
The general solution of a wide class of conformable fractional differential equations of mathematical physics can be written in terms of the CFGHF after using a suitable change of independent variable. This phenomenon  will be illustrated through the following interesting discussion.

Abul-Ez et al. \cite{abul2020conformable} gave the hypergeometric representation of the conformable
fractional Legendre polynomials $P_{\alpha n}\left( x\right) $, as 
\begin{equation*}
P_{\alpha n}\left( x\right) =~_{2}F_{1}\left( -n,n+1;1;\frac{1-x^{\alpha }}{2
}\right) .
\end{equation*}
This formula can be easily obtained through the CFGHE as follows. \\
Note that, the conformable fractional Legendre polynomials $P_{\alpha n}\left( x\right) 
$ satisfy the conformable fractional differential equation  
\begin{equation}\label{eq11.15}
\left( 1-x^{2\alpha }\right) D_{x}^{\alpha }D_{x}^{\alpha }P_{\alpha
n}\left( x\right) -2\alpha x^{\alpha }D_{x}^{\alpha }P_{\alpha n}\left(
x\right) +\alpha ^{2}n\left( n+1\right) P_{\alpha n}\left( x\right) =0.
\end{equation}

With the help of $t^{\alpha }=\frac{1-x^{\alpha }}{2},$ we get 
\begin{equation*}
D_{x}^{\alpha }P_{\alpha n}=\left( \frac{-1}{2}\right) ~D_{t}^{\alpha
}P_{\alpha n}, \text{\ and }
D_{x}^{\alpha }D_{x}^{\alpha }P_{\alpha n}=\frac{1}{4}D_{t}^{\alpha
}D_{t}^{\alpha }P_{\alpha n}.
\end{equation*}

Using \eqref{eq11.15}, we obtain 
\begin{equation}\label{eq11.16}
t^{\alpha }\left( 1-t^{\alpha }\right) D_{t}^{\alpha }D_{t}^{\alpha
}P_{\alpha n}+\alpha \left\{ 1-2t^{\alpha }\right\} D_{t}^{\alpha }P_{\alpha
n}+\alpha ^{2}n\left( n+1\right) P_{\alpha n}=0.
\end{equation}

Comparing the last equation \eqref{eq11.16} with the CFGHE \eqref{eq3.1}, we obtain the parameters $\mu ,$ $\nu $ and $c$, such that 
\begin{equation*}
\mu =-n,~~\nu =n+1~\text{\ and }c=1,
\end{equation*}

Hence, we may write the conformable fractional Legendre polynomials as  
\begin{equation*}
\begin{split}
P_{\alpha n}\left( x\right)  &=~_{2}F_{1}\left( -n,n+1;1;t^{\alpha }\right) 
\\
&=~_{2}F_{1}\left( -n,n+1;1;\frac{1-x^{\alpha }}{2}\right) .
\end{split}
\end{equation*}

\begin{ex}
Consider the following conformable fractional differential equation 
\begin{equation}  \label{eq11.12}
\left( 1-e^{x^{\alpha }}\right) D_{x}^{\alpha }D_{x}^{\alpha }y+\frac{\alpha 
}{2}D_{x}^{\alpha }y+\alpha ^{2}e^{x^{\alpha }}y=0
\end{equation}

Then the general solution of \eqref{eq11.12} can be easily deduced as follows. \\
Let $t^{\alpha }=\left( 1-e^{x^{\alpha }}\right) $, then we have 
\begin{equation*}
D_{x}^{\alpha }y=-e^{x^{\alpha }}D_{t}^{\alpha }y=-\left( 1-t^{\alpha
}\right) D_{t}^{\alpha }y
\end{equation*}
and 
\begin{equation*}
D_{x}^{\alpha }D_{x}^{\alpha }y=-\alpha e^{x^{\alpha }}D_{t}^{\alpha
}y+e^{2x^{\alpha }}D_{t}^{\alpha }D_{t}^{\alpha }y =\left( 1-t^{\alpha }\right) ^{2}D_{t}^{\alpha }D_{t}^{\alpha }y-\alpha
\left( 1-t^{\alpha }\right) D_{t}^{\alpha }y
\end{equation*}

Now, in view of  \eqref{eq11.12}, it can be easily seen that, 
\begin{equation}  \label{eq11.13}
t^{\alpha }\left[ \left( 1-t^{\alpha }\right) ^{2}D_{t}^{\alpha
}D_{t}^{\alpha }y-\alpha \left( 1-t^{\alpha }\right) D_{t}^{\alpha }y\right]
-\frac{\alpha }{2}\left( 1-t^{\alpha }\right) D_{t}^{\alpha }y+\alpha
^{2}\left( 1-t^{\alpha }\right) y=0
\end{equation}

Simplifying \eqref{eq11.13}, we get 
\begin{equation}  \label{eq11.14}
t^{\alpha }\left( 1-t^{\alpha }\right) D_{t}^{\alpha }D_{t}^{\alpha
}y+\alpha \left\{ \frac{-1}{2}-t^{\alpha }\right\} D_{t}^{\alpha }y+\alpha
^{2}y=0
\end{equation}

Comparing \eqref{eq11.14} with the CFGHE \eqref{eq3.1}, we obtain $
\mu +\nu =0,~\mu \nu =-1 \text{\ and } c=\frac{-1}{2}$.
Thus,  $\mu =1$ and $\gamma =-1$. 
Therefore, the general solution of the CFDE \eqref{eq11.12} can be given in the form 
\begin{eqnarray*}
y &=&A~_{2}F_{1}\left( \mu ,\nu ;c;t^{\alpha }\right) +B~t^{\alpha \left(
1-c\right) }~_{2}F_{1}\left( \mu -c+1,\nu -c+1;2-c;t^{\alpha }\right) \\
&=&A~_{2}F_{1}\left( 1,-1;\frac{-1}{2};1-e^{x^{\alpha }}\right) +B~\left[
1-e^{x^{\alpha }}\right] ^{\frac{3}{2}}~_{2}F_{1}\left( \frac{5}{2},\frac{1}{
2};\frac{5}{2};1-e^{x^{\alpha }}\right)
\end{eqnarray*}%
where $A$ and $B$  are arbitrary constants.
\end{ex}
The strategy  used in the preceding  example can be easily applied to solve some famous differential equations such as, Chebyshev, Fibonacci, and Lucas differential equations in the framework of fractional calculus. 
Handled by Chebyshev, Fibonacci, and Lucas differential equations have advantages due to their own importance in applications. Thus, we may mention that the properties of Chebyshev polynomials are used to give a numerical solution of the conformable space-time fractional wave equation, see  \cite{yaslan2018numerical}.
The Fibonacci polynomial is a polynomial sequence, which can be considered as a generalization circular for the Fibonacci numbers. It is used in many applications, e.g., biology, statistics, physics, and computer science \cite{koshy2019fibonacci}.
The Fibonacci and Lucas sequences of both polynomials and numbers are of great importance in a variety of topics, such as number theory, combinatorics, and numerical analysis. For these studies, we refer to \cite{wang2015some,koshy2019fibonacci,taskara2010properties,gulec2013new}. Table \ref{Table} provides the general solutions of such famous differential equations briefly. 


\begin{table}[ht]
\centering
\begin{tabular}{|l|l|}
\hline
\multicolumn{2}{|c|}{\textbf{Conformable fractional Chebyshev differential
equation}} \\ \hline\hline
\textbf{CF Chebyshev DE} & $\left( 1-x^{2\alpha }\right) D_{x}^{\alpha }D_{x}^{\alpha
}y-\alpha x^{\alpha }D_{x}^{\alpha }y+\alpha ^{2}n^{2}y=0$ \\ \hline
\textbf{Suitable transformation} & $t^{\alpha }=\frac{1-x^{\alpha }}{2}$ \\ 
\hline
\textbf{Transformed equation} & $t^{\alpha }\left( 1-t^{\alpha }\right)
D_{t}^{\alpha }D_{t}^{\alpha }y+\alpha \left\{ \frac{1}{2}-t^{\alpha
}\right\} D_{t}^{\alpha }y+\alpha ^{2}n^{2}y=0.$ \\ \hline
\textbf{Parameters }$\left( \mu ,\nu \text{ and }c\right) $ & $\mu =-n,~~\nu
=n\text{\ and }c=\frac{1}{2}.$ \\ \hline
\textbf{General solution} & $y=A~_{2}F_{1}\left( -n,n;\frac{1}{2};\frac{%
1-x^{\alpha }}{2}\right) $\\
&\quad$+B~\left[ \frac{1-x^{\alpha }}{2}\right] ^{\frac{1}{%
2}}~_{2}F_{1}\left( -n+\frac{1}{2},n+\frac{1}{2};\frac{3}{2};\frac{%
1-x^{\alpha }}{2}\right) ,$
 \\ \hline\hline
\multicolumn{2}{|c|}{\textbf{Conformable fractional Fibonacci differential
equation}} \\ \hline\hline
\textbf{CF Fibonacci DE} & $\left( x^{2\alpha }+4\right) D_{x}^{\alpha }D_{x}^{\alpha
}y+3\alpha x^{\alpha }D_{x}^{\alpha }y-\alpha ^{2}\left( n^{2}-1\right) y=0$
\\ \hline
\textbf{Suitable transformation} & $t^{\alpha }=\left( 1+\frac{x^{2\alpha }}{%
4}\right) $ \\ \hline
\textbf{Transformed equation} & $t^{\alpha }\left( 1-t^{\alpha }\right)
D_{t}^{\alpha }D_{t}^{\alpha }y+\alpha \left\{ \frac{3}{2}-2t^{\alpha
}\right\} D_{t}^{\alpha }y-$\\ &\quad$\alpha ^{2}\frac{\left( 1-n^{2}\right) }{4}y=0.$
\\ \hline
\textbf{Parameters }$\left( \mu ,\nu \text{ and }c\right) $ & $\mu =\frac{1-n%
}{2},~~\nu =\frac{1+n}{2}~$\ and $c=\frac{3}{2}$ \\ \hline
\textbf{General solution} & $y=A~_{2}F_{1}\left( \frac{1-n}{2},\frac{1+n}{2};%
\frac{3}{2};1+\frac{x^{2\alpha }}{4}\right) $\\
&\quad$+B~\left[ 1+\frac{x^{2\alpha }}{4%
}\right] ^{\frac{-1}{2}}~_{2}F_{1}\left( \frac{-n}{2},\frac{n}{2};\frac{1}{2}%
;1+\frac{x^{2\alpha }}{4}\right) $ \\ \hline\hline
\multicolumn{2}{|c|}{\textbf{Conformable fractional Lucas differential
equation}} \\ \hline\hline
\textbf{CF Lucas DE} & $\left( x^{2\alpha }+4\right) D_{x}^{\alpha }D_{x}^{\alpha
}y+\alpha x^{\alpha }D_{x}^{\alpha }y-\alpha ^{2}n^{2}y=0,$ \\ \hline
\textbf{Suitable transformation} & $t^{\alpha }=\left( 1+\frac{x^{2\alpha }}{%
4}\right) $ \\ \hline
\textbf{Transformed equation} & $t^{\alpha }\left( 1-t^{\alpha }\right)
D_{t}^{\alpha }D_{t}^{\alpha }y+\alpha \left\{ \frac{1}{2}-t^{\alpha
}\right\} D_{t}^{\alpha }y+\alpha ^{2}\frac{n^{2}}{4}y=0.$ \\ \hline
\textbf{Parameters }$\left( \mu ,\nu \text{ and }c\right) $ & $\mu =\frac{n}{%
2},~~\nu =\frac{-n}{2}~\text{\ and }c=\frac{1}{2}$ \\ \hline
\textbf{General solution} & $y=A~_{2}F_{1}\left( \frac{n}{2},\frac{-n}{2};%
\frac{1}{2};1+\frac{x^{2\alpha }}{4}\right) $\\
&\quad$+B~\left[ 1+\frac{x^{2\alpha }}{4%
}\right] ^{\frac{1}{2}}~_{2}F_{1}\left( \frac{1+n}{2},\frac{1-n}{2};\frac{3}{%
2};1+\frac{x^{2\alpha }}{4}\right) $ \\ \hline
\end{tabular}
\caption{General Solutions of some famous CDEs}
\label{Table}
\end{table}

\section{Conclusion}

The Gaussian hypergeometric function $_{2}F_{1} \left(\mu , \nu ;c; x\right)$ has been studied extensively from its
mathematical point of view \cite{erdlyi1953higher}. This occurs probably, in part, due to its many applications on a large
variety of physical and mathematical problems. In quantum mechanics, the solution of the
Schr\"{o}dinger equation for some systems is expressed in terms of $_{2}F_{1}$ functions, as observed 
in solving the P\"{o}schl-Teller, Wood-Saxon, or Hulth\'{e}n en potentials \cite{flugge1971practical}. Another very important 
case is related to the angular momentum theory since the eigenfunctions of the angular
momentum operators are written in terms of $_{2}F_{1}$ functions \cite{abramowitz1972handbook}.
One important tool related to such problems  is then provided by the derivatives of the $_{2}F_{1}$ function with respect to the parameters $\mu , \nu $, and $c$ since they
allow one, for example, to write a Taylor expansion around given values $\mu _{0}, \nu _{0}$, or $c_{0}$.
Therefore, the importance of the Gaussian hypergeometric differential equation motivates one to give a detailed study on the CFGHF.  The solutions of the CFGHE are given to improve and generalize those given in \cite{hammad2019fractional}. Besides, many interesting properties, and useful formulas of CFGHF are presented. Finally, supported examples, showing that a class of conformable fractional differential equations of mathematical physics can be solved by means of the CFGHF.

It is interesting to mention that the obtained results of the current work has treated various famous aspects such as, generating functions, differential forms, contiguous relations, and recursion formulas, for which they have been generalized and developed in the context of the fractional setting. 
These aspects play important roles in themselves and their diverse applications. In fact, most of the special functions of mathematical physics and engineering,  for instance, the Jacobi and Laguerre polynomials can be expressed in terms of the Gauss hypergeometric function and other related hypergeometric functions. 
Therefore, the numerous generating functions involving extensions and generalizations of the Gauss hypergeometric function are capable  of playing important roles in the theory of special functions of applied mathematics and mathematical physics, see \cite{srivastava2014generating}.

The derivatives of any order of the GHF
$_{2}F_{1}( \mu ,\nu ;c; x )$ with respect to the parameters $\mu , \nu $, and $c$, which can be expressed  in
terms of generalizations of multivariable Kampe de F\'{e}riet functions, have many applications (see the work of \cite{ancarani2009derivatives}).
We may recall that applications of the contiguous function relation range from the evaluation of hypergeometric series to the derivation of the summation and transformation formulas for such series; these can be used to evaluate the contiguous functions to a hypergeometric function, see \cite{mubeen2014contiguous}.
Furthermore, using some contiguous function relations for the classical Gauss hypergeometric series $_{2}F_{1}$, several new recursion formulas for the Appell functions $F_{2}$ with important applications have been the subject of some research work, see for example \cite{opps2009recursion} and reference therein.
In conclusion, it is rather interesting to consider a wide generalization of the Gaussian hypergeometric function in the forthcoming work either in the framework of fractional calculus or in a higher dimensional setting. Our concluded results can be used for a wide variety of cases.

\end{document}